\theoremstyle{plain}
\newtheorem{theo*}{Theorem}
\newtheorem{theo}{Theorem}[section]
\newtheorem{defi}[theo]{Definition}
\newtheorem{cor}[theo]{Corollary}
\newtheorem{prop}[theo]{Proposition}
\newtheorem{lem}[theo]{Lemma}
\theoremstyle{remark}
\newtheorem{rem}[theo]{Remark}
\newtheorem{rem*}{Remark}
\newlist{propenum}{enumerate}{1}
\setlist[propenum]{label=(\roman*)}
\newcommand{\ca}{{\mathcal A}}
\newcommand{\cc}{{\mathcal C}}
\newcommand{\cd}{{\mathcal D}}
\newcommand{\cj}{{\mathcal J}}
\newcommand{\cl}{{\mathcal L}}
\newcommand{\ct}{{\mathcal T}}
\newcommand{\cu}{{\mathcal U}}
\newcommand{\E}{{\mathbb E}}
\newcommand{\N}{{\mathbb N}}
\renewcommand{\P}{{\mathbb P}}
\newcommand{\R}{{\mathbb R}}
\newcommand{\T}{{\mathbb T}}
\newcommand{\Z}{{\mathbb Z}}
\newcommand{\ba}{\mathbf a}
\newcommand{\bb}{\mathbf b}
\newcommand{\be}{\mathbf e}
\newcommand{\bk}{\mathbf k}
\newcommand{\bn}{\mathbf n}
\newcommand{\bt}{{\mathbf t}}
\newcommand{\bs}{{\mathbf s}}
\newcommand{\bv}{{\mathbf v}}
\newcommand{\bx}{\mathbf x}
\newcommand{\ind}{{\mathbbm 1}}
\newcommand{\un}{\bm{1}}
\newcommand{\zero}{\bm{0}}
\newcommand{\roott}{\emptyset}
\newcommand{\bp}{\mathbf{p}}
\newcommand{\qc}{\theta_{\alpha}}
\newcommand{\supp}{{\rm supp}\,}
\newcommand{\Card}{{\rm Card}\;}
\newcommand{\dist}{{\rm dist}\;}
\newcommand{\val}[1]{\mathop{\left| #1 \right|}\nolimits}
\newcommand{\inv}[1]{\mathop{\frac{1}{ #1}}\nolimits}
\newcommand{\param}{{\rm Pa}}
\newcommand{\II}[1]	{[\![#1]\!]}
\newcommand{\lb}{[\![}
\newcommand{\rb}{]\!]}
\DeclareFontFamily{U}{mathb}{\hyphenchar\font45}
\DeclareFontShape{U}{mathb}{m}{n}{ <-6> matha5 <6-7> matha6 <7-8>
mathb7 <8-9> mathb8 <9-10> mathb9 <10-12> mathb10 <12-> mathb12 }{}
\DeclareSymbolFont{mathb}{U}{mathb}{m}{n}
\DeclareMathAccent{\abxring}{0}{mathb}{"38}
\DeclareFontFamily{U}{mathb}{\hyphenchar\font45}
\DeclareFontShape{U}{mathb}{m}{n}{ <-6> matha5 <6-7> matha6 <7-8>
mathb7 <8-9> mathb8 <9-10> mathb9 <10-12> mathb10 <12-> mathb12 }{}
\DeclareSymbolFont{mathb}{U}{mathb}{m}{n}
\title[Conditioning BGW trees to have large
sub-populations]{Conditioning Bienaym\'{e}-Galton-Watson trees to have
  large sub-populations}
\date{\today}
\begin{document}
\author{Romain Abraham}
\address{Romain Abraham, Institut Denis Poisson,
Universit\'{e} d'Orl\'{e}ans,
Universit\'e de Tours, CNRS, France}
\email{romain.abraham@univ-orleans.fr}

\author{Hongwei Bi}
\address{Hongwei Bi, University of international
business and economics, China}
\email{bihw@uibe.edu.cn}

\author{Jean-Fran\c{C}ois Delmas}
\address{Jean-Fran\c{c}ois Delmas,  CERMICS, \'{E}cole des Ponts, France}
\email{jean-francois.delmas@enpc.fr}

\subjclass[2010]{60J80; 60B10}

\keywords{Bienaym\'{e}-Galton-Watson tree,  generic probability distribution, local limit}

\begin{abstract}
We study the local limit in distribution  of
Bienaym\'{e}-Galton-Watson trees conditioned on having large
sub-populations. Assuming a generic and aperiodic condition on the offspring
distribution, we prove the existence of a limit given by a  Kesten's  tree
associated with a certain critical offspring distribution.
\end{abstract}

\maketitle

\section{Introduction}

Local  limit of large
Bienayme-Galton-Watson (in short, BGW) trees have been extensively
studied in recent years.
The classical result of Kesten \cite{k86} describes the local limit
of a critical or subcritical BGW tree conditioned on reaching at least
height $h$ locally converges in distribution as $h$ goes to infinity to the
so-called size-biased tree or Kesten's tree, which is a tree with an
infinite spine. We refer to Section~\ref{sec:kesten} for a precise
description of the Kesten's tree.

Over  the years,  motivated by  various point  of view  from theoretical
probability, combinatorics, biology or physics, other conditionings have
been considered,  such as  large total  progeny \cite{k75,  gk04}, large
number  of leaves  \cite{js11, ck14},  large number  of protected  nodes
\cite{abd17},  existence  of  an  individual  with  a  large  number  of
out-degree or  children \cite{he17,  he22}.  Janson  \cite{j12} surveyed
the  local  limit  of  BGW  trees when  conditioned  on  a  large  total
population size, and  Abraham and Delmas \cite{ad14a,  ad14b} provided a
general framework, which describes in full generality the local limit of
critical  or  subcritical  BGW  trees  conditioned  on  having  a  large
sub-population.   Notice that  in \cite{js11,j12,ad14b,  s19} the  local
limit may exhibit a condensation phenomenon, as one node of the limiting
tree has an  infinite number of children. With  other conditioning, such
as  a large  size  of  a late  generation  \cite{abd20,  ad19}, or  with
exponential weight given by the total height of the tree among tree with
given large size \cite{du23}, the local limit is a tree with an infinite
backbone. Local limit  of large multi-type Galton-Watson  trees has also
been considered in \cite{p16,  adg18}, and also in~\cite{s18,thevenin23}
when conditioning on a linear combination  of the sizes of the sub-populations with a given type.

\medskip

One  can also  consider  scaling limits  of BGW  trees  (seen as  metric
space), the  so called  L\'evy continuum trees,  as initiated  by Aldous
\cite{a91} and generalized by Duquesne  and Le Gall \cite{dlg02}. Let us
mention that  there is  a large  recent literature  on this  subject. In
particular  Marzouk \cite{marzouk22}  considered  the  scaling limit  of
random trees  with a prescribed  degree sequence, and  Kargin \cite{k23}
and Kortchemski and Marzouk \cite{km23} the scaling limit
of BGW trees conditioned on its total progeny and the number of leaves.

Motivated by those  last works, we shall investigate the  local limit of
BGW tree when conditioning on its total progeny and the number of leaves
being large.   More generally, let  $\ca=(A_i)_{i\in \II{1,J}}$, where
$\II{a,b}=[a,b]\cap \N$,   be a
finite    collection     of    pairwise    subsets    of     $\N$    and
$\alpha=(\alpha_i)_{i\in   \II{1,   J}}$   a  probability   measure   on
$\II{1, J}$. We  denote by $A_0$ the  complementary of
$\bigcup_{i\in \II{1 ,J}} A_i$ in  $\N$, which might be  empty or
not.   We shall  then generalize  \cite{ad14a} and  consider the  local
limit of  BGW trees conditioned  to have  $\lfloor \alpha_i n  \rfloor $
nodes with out degree  in $A_i$ for all $i\in \II{1, J}$  as $n$ goes to
infinity. We stress that there is no condition on the nodes with out-degree in $A_0$.

More precisely let $p=(p(n))_{n\in \N}$ be a probability distribution on
$\N$; its support  is $\supp(p)=\{n\in \N\, \colon\,  p(n)>0\}$. We assume
that  $p$   is  non-trivial  in   the  sense  that   $p(0)>0$  and
$p(\{0, 1\})<1$, where $p(A)=\sum_{n\in A}  p(n)$ for $A\subset \N$.  We
denote by  $\mu(p)\in (0, +\infty ]$  its mean.  We denote  by $\ct_p$ a
BGW tree  with offspring distribution  $p$. Notice we don't  assume that
$\mu(p)$  is even  finite, however  since  $p(0)$ is  positive the  tree
$\ct_p$     is     finite     with    positive     probability.      Set
$\bar \N=\N \cup\{\infty \}$.  For a  tree $\bt$ we denote by $L_A(\bt)$
the  number of  its nodes  with out-degree  (or number  of children)  in
$A$. We simply set:
\[
  L_\ca(\bt)=(L_{A_i}(\bt))_{i\in \II{1, J}}\in \bar \N^J.
\]

In Theorem~\ref{th:p,a-compatibility} we completely characterize the
non-trivial probability distributions $p'$ on $\N$ such that
$\supp(p')\subset \supp(p)$ and for all $\bn=(n_i)_{i\in \II{1, J}}  \in \N^J$
such that
    $\P(      L_{\ca}(\ct_{p'})     =     \bn)>0$      (and     thus
$\P( L_{\ca}(\ct_p) = \bn)>0$), we have:
\[
\dist\left(\ct_p\, \big|\,  L_{\ca}(\ct_p) = \bn\right)
= \dist\left(\ct_{p'}\, \big|\,  L_{\ca}(\ct_{p'}) = \bn\right).
\]
Such probability distributions are called \emph{$(p, \ca)$-compatible}.
The $(p, \ca)$-compatible probability distributions can be continuously
parametrized by a parameter $(\theta, \beta)$ in a subset of $[0,
+\infty ]\times \R_+^J$. When the parameter $\theta$ is positive and
finite, then the $(p, \ca)$-compatible  probability distribution $\tilde p_{\theta,
  \beta}$ associated with the parameter $(\theta, \beta)$ is given by:
\[
  \tilde p_{\theta, \beta}(n) = \beta_i \theta^n \, p(n)
  \quad\text{for}\quad n\in A_i
  \quad\text{and}\quad i\in \II{0, J}
  ,
  \quad\text{where}\quad
  \beta_0=\theta^{-1}.
\]
The fact  that such  exponentially tilted probability  distributions are
$(p, \ca)$-compatible was already observed in~\cite{ad14a} for $J=1$ and
in Th\'evenin~\cite{thevenin23} for  the multi-type BGW tree  setting.  
In comparison with those two papers, we give here an exhaustive description of the $(p, \ca)$-compatible probability distributions. 
In particular, it
is possible to observe degenerate  cases when the parameter $\theta$ can
take   the   values   $0$    and   $\infty   $,   see~\eqref{eq:def-pt0}
and~\eqref{eq:def-pt-infty}.  In both   cases,  when  possible,   we get
  that  $0\not   \in   A_0$  and  for  the  latter   that
$A_0\cap \supp(p)$ is either empty  or reduced to $\{1\}$.  As suggested
by  this remark,  we  shall indeed  distinguish in  most  of the  proofs
according to  $0\not \in A_0$ (the  leaves are directly involved  in the
conditioning) or not.

\medskip

For $\bx=(x_i)_{i\in \II{1, J}}\in \R^J$, we set
$|\bx|=\sum_{i\in  \II{1, J}} |x_i|$ the $L^1$ norm of $\bx$.
As in~\cite{p16}, it  is
interesting to have a  fixed (asymptotic) proportion of sub-populations,
and  thus  consider   the  local  limit  of   $\ct_p$  conditionally  on
$\{  L_{\ca}(\ct_p)   =  \bn\}$  when  $\bn/|\bn|$   converges  to  some
$\alpha\in \R_+ ^J$ such that $|\alpha|=1$,  as $|\bn|$
goes to infinity.
For  $p$ non-trivial, intuitively we have that $L_A(\ct_p)$ is of order
$p(A)$     times     the     total    size     of     the     population
$\sharp  \ct_p=L_\N(\ct_p)$ when the tree is large, see
\cite{janson16,thevenin20} for precise statement. 
Thus, it is natural to consider among the $(p, \ca)$-compatible
probability distribution those which are in the direction $\alpha=(\alpha_i)_{i\in \II{1, J}}$, that
is:
\[
\tilde p_{\theta, \beta} (A_i)  \propto\, 
\alpha_i
\quad\text{for all}\quad
i\in \II{1, J}.
\]
From this,  we can write  $\alpha$ as  a function of  $(\theta, \beta)$,
provided that $\tilde p_{\theta,  \beta} (A_0^c)>0$ or equivalently that
$\beta\neq  \zero$  (see   Remark~\ref{rem:theta=0}  for  details),  and
similarly $\beta$  as a function  of $(\theta, \alpha)$.  This  gives an
elementary  reparametrization  $(p_{\theta,   \alpha})$  of  the  family
$(\tilde p_{\theta, \beta})$ by the parameter $\theta$ and its direction
$\alpha$,      provided     $\beta\neq      \zero$.      The      family
$(\tilde    p_{\theta,     \beta})$    is    explicitely     given    in
Lemma~\ref{lem:p-qa}     and     the      possible     directions     in
Proposition~\ref{prop:admiss}.   In   particular,  $\alpha$  is   not  a
possible direction  if and only  if there  exists $j\in \II{1,  J}$ such
that    $\alpha_j=0$    and    $0\in   A_j$    or    $\alpha_j=1$    and
$A_0\cup A_j\subset \{0, 1\}$ (those two latter conditions correspond to
$p_{\theta, \alpha}$  being non-trivial). In particular,  if the entries
of   $\alpha$   are  all   positive   then   $\alpha$  is   a   possible
direction. Furthermore,  if $\alpha$ is  a possible direction,  then the
set $I_\alpha\subset [0, +\infty ]$  of possible value for the parameter
$\theta$ is an interval,  see Section~\ref{sec:mean-alpha}.  As observed
in previous works, the existence of a critical parameter $\qc$ such that
$\mu(p_{\qc, \alpha})=1$,  is a key point  to obtain the local  limit of
the conditioned BGW tree.   Proposition~\ref{prop:uniq} asserts that the
mean  function $\theta  \mapsto \mu(p_{\theta,  \alpha})$ is  increasing
when $\mu(p_{\theta,  \alpha})\leq 1$,  and thus there  is at  most one such
critical parameter $\qc$.   Let us stress this result is  not obvious as
the  map $\theta  \mapsto \mu(p_{\theta,  \alpha})$ is  not monotone  in
general (see an example in Remark~\ref{rem:mu-monotone}).
When the critical parameter $\qc$ exists, then the distribution $p$ is
called \emph{generic for the direction $\alpha$}, and we set:
\begin{equation}
   \label{eq:def-pa-intro}
  p_\alpha=p_{\qc, \alpha} \quad\text{(and thus $\mu(p_\alpha)=1$).}
\end{equation}

We provide necessary and sufficient conditions for $p$ to be generic in
the direction $\alpha$ in Theorem~\ref{thm:generic} which are similar to
those obtained in~\cite{ad14a} when $J=1$.
We don't study further the
relation between the  sets $\ca=(A_i)_{i\in \II{1, J}}$ and the fact
that $p$ is generic, and refer to~\cite{ad14a} again to appreciate the
complexity already when $J=1$.

\medskip

Let $\ct_p^*$
denote the Kesten tree associated with the probability distribution $p$
when $\mu(p)=1$, that is, the local limit in distribution of $\ct_p$
conditioned to have height at least $h$, as $h$ goes to infinity.
Before giving the main result of the paper, we recall the hypothesis:
\begin{enumerate}[(H1)]
\item\label{it:H1}
  $p$ is a \emph{non-trivial} probability distribution on $\N$ (there is no
  moment condition). Without loss of generality we assume that the sets
  $\ca=(A_i)_{i\in  \II{1, J}}$ are pairwise disjoint
 subsets of the support of $p$.

\item \label{it:Hdir-poss}
  $\alpha\in \R^J_+$ with  $|\alpha|=1$, is  a
\emph{possible direction} (this condition is always satisfied if all the entries of
$\alpha$ are positive).

\item $p$ is \emph{generic} in the direction $\alpha$.

\item  \label{it:Hlast}   $p$  is  \emph{aperiodic}  in   the  sense  of
  Definition~\ref{defi:period}. (In  particular being  aperiodic depends
  on the    sets $\ca$  and the
  direction $\alpha$, see also Remark~\ref{rem:period0}.)
\end{enumerate}

We are now ready to state the main result of the paper.
Recall  $p_\alpha$ in~\eqref{eq:def-pa-intro}.

\begin{theo*}
  \label{th:main-intro}
Assume that Hypothesis~\ref{it:H1}-\ref{it:Hlast} hold.
We have the following local limit in distribution:
  \[
\dist(\ct_p\,|\, L_{\ca}(\ct_p)=\bn)
\xrightarrow[|\bn|\to\infty]{} \dist(\ct^*_{p_{\alpha}}),
\]
along any sequence $(\bn)$ in  $\N^J$ such that: the sub-populations are
large, that is, $\lim |\bn|=\infty$; the conditioning is legit, that is,
$\P(L_\ca(\ct_p)=\bn)>0$, and  the direction is strictly  $\alpha$, that
is,  $  \lim_{|\bn|\rightarrow  \infty  }  \bn/|\bn|=\alpha$  and,  with
$\alpha=(\alpha_i)_{i\in \II{1,  J}}$ and $\bn=(n_i)_{i\in  \II{1, J}}$,
for all $j\in \II{1, J}$:
\begin{equation}
  \label{eq:cond-zero-intro}
  \alpha_j=0 \implies   n_j= 0.
\end{equation}
\end{theo*}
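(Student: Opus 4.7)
The plan is to follow the general strategy of~\cite{ad14a} (where the case $J = 1$ is treated) in three stages: first an exponential tilt reducing to the critical case; then a cylinder decomposition expressing the conditional probability of containing a fixed finite subtree as a ratio of forest probabilities; finally a multivariate local limit theorem identifying the asymptotic ratio.

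Since $p_\alpha = p_{\qc,\alpha}$ is $(p,\ca)$-compatible by construction (Theorem~\ref{th:p,a-compatibility}), one has $\dist(\ct_p \mid L_\ca(\ct_p) = \bn) = \dist(\ct_{p_\alpha} \mid L_\ca(\ct_{p_\alpha}) = \bn)$, so we may work with the critical offspring distribution $p_\alpha$, for which the Kesten tree $\ct^*_{p_\alpha}$ is defined. Local convergence on planted plane trees is equivalent to the convergence of $\P(\ct \supset T)$ for every finite rooted subtree $T$, where $\{\ct \supset T\}$ is the event that $T$ agrees with $\ct$ truncated at its own leaves. A branching decomposition at the leaves of $T$ yields
\begin{equation*}
\P\big(\ct_{p_\alpha} \supset T,\ L_\ca(\ct_{p_\alpha}) = \bn\big) = \Big(\prod_{v \in T^\circ} p_\alpha(c_v)\Big) \cdot \P\big(L_\ca(\cf^{k_T}) = \bn - L_\ca(T^\circ)\big),
\end{equation*}
where $T^\circ$ is the set of internal vertices of $T$, $c_v$ the number of children of $v$ in $T$, $k_T$ the number of leaves, and $\cf^{k_T}$ a forest of $k_T$ i.i.d.\ copies of $\ct_{p_\alpha}$. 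Combined with the classical formula $\P(\ct^*_{p_\alpha} \supset T) = k_T \prod_{v \in T^\circ} p_\alpha(c_v)$ (valid when $\mu(p_\alpha) = 1$), the theorem reduces to proving, for every fixed integer $k \geq 1$ and every fixed $\bs \in \N^J$,
\begin{equation*}
\frac{\P\big(L_\ca(\cf^{k}) = \bn - \bs\big)}{\P\big(L_\ca(\ct_{p_\alpha}) = \bn\big)} \xrightarrow[|\bn| \to \infty]{} k,
\end{equation*}
uniformly along the sequences $\bn$ prescribed by the theorem.

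To handle this ratio, encode $\cf^k$ by its depth-first out-degree sequence $(\xi_j)_{j \geq 1}$ (i.i.d.\ with law $p_\alpha$), and set $S_n = \sum_{j \leq n} \xi_j$ and $N_i(n) = \sum_{j \leq n} \ind_{A_i}(\xi_j)$. The Otter--Dwass cyclic lemma then gives, after summing on the total size $N = n_0 + |\bn|$,
\begin{equation*}
\P(L_\ca(\cf^k) = \bn) = \sum_{n_0 \geq 0} \frac{k}{n_0 + |\bn|}\, \P\Big(S_{n_0 + |\bn|} = n_0 + |\bn| - k,\ N_i(n_0 + |\bn|) = n_i \text{ for all } i \in \II{1,J}\Big).
\end{equation*}
Since $\mu(p_\alpha) = 1$ and $p_\alpha(A_i) \propto \alpha_i$, the mean drift of the $(J+1)$-dimensional random walk with increment $(\xi_j, \ind_{A_1}(\xi_j), \ldots, \ind_{A_J}(\xi_j))$ is exactly aligned with the target. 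Hypothesis~\ref{it:Hlast} ensures this joint increment is aperiodic, so a Gnedenko-type multivariate local limit theorem (in the finite-variance regime, or in the appropriate stable domain of attraction of $p_\alpha$) produces a uniform sharp asymptotic for each summand. The factor $k$ in the target limit then comes directly from the prefactor in the cyclic lemma, while the bounded shift by $\bs$ contributes only a $1 + o(1)$ correction by uniform continuity of the limit density.

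The main obstacle lies in this last step: establishing the uniform multivariate local limit theorem under the minimal assumption that $p_\alpha$ is critical and aperiodic (no further moment condition), controlling the summation over the free coordinate $n_0$ uniformly along directions $\bn / |\bn| \to \alpha$, and verifying that the admissibility of $\alpha$ from Hypothesis~\ref{it:Hdir-poss} together with the boundary constraint~\eqref{eq:cond-zero-intro} rules out precisely the degenerate lattice configurations on which the local limit theorem would fail.
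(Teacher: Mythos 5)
Your opening moves coincide with the paper's: use $(p,\ca)$-compatibility to replace $p$ by the critical $p_\alpha$, and reduce local convergence (via a grafting/determining-class argument and the Kesten-tree formula) to showing that a ratio of the form $\P(L_\ca(\cdot)=\bn-\bs)/\P(L_\ca(\cdot)=\bn)$ converges to an explicit constant. But the step you defer to "the main obstacle" is exactly where the proof lives, and the tool you propose for it is not available under the stated hypotheses. Hypothesis~\ref{it:H1} imposes no moment condition: $p_\alpha$ is critical but may have infinite variance and need not belong to the domain of attraction of any stable law, so a Gnedenko-type multivariate local limit theorem (finite-variance or stable regime) cannot be invoked. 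What is needed, and what the paper actually uses, is a \emph{strong ratio} limit theorem requiring only aperiodicity and integrability of the increments. The paper obtains it by first applying (an extension of) Rizzolo's transformation to eliminate the $A_0$-vertices, so that $L_{\ca^*}(\ct_{p_\alpha})$ becomes the vector of progenies by type of a multi-type BGW tree (Lemma~\ref{lem:J*-BGW}), whose offspring law is shown to be critical, with $\alpha^*$ as left eigenvector, and aperiodic precisely under Definition~\ref{defi:period} (Lemma~\ref{p-ca-critical}); then the strong ratio theorem (19) of~\cite{adg18} yields the limit $1$ for the relevant ratio, with a separate treatment (Section~\ref{sec:appendix}) of the degenerate case~\eqref{eq:delicat} where the mean matrix is not primitive and~\cite{adg18} does not apply verbatim. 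Your route would have to prove such a ratio theorem from scratch, under aperiodicity and first moments only, which you do not do.

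Two further concrete issues in your random-walk formulation. First, the claim that Hypothesis~\ref{it:Hlast} makes the $(J+1)$-dimensional increment $(\xi,\ind_{A_1}(\xi),\dots,\ind_{A_J}(\xi))$ aperiodic on $\Z^{J+1}$ is false as stated: the coordinates satisfy deterministic affine constraints (for instance $\sum_{i\in\II{0,J}}\ind_{A_i}(\xi)=1$, and through~\eqref{eq:kt=L0-1} the number of leaves is a function of the other degree counts), so the walk lives on a proper affine sublattice; the paper's aperiodicity, via the set $\Gamma_\alpha$, is aperiodicity relative to that sublattice, and one must also check that the shifted targets $\bn-L_\ca(\bt)+\bb$ remain attainable on it (this is where the degenerate configuration~\eqref{eq:delicat} causes trouble). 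Second, by not removing the $A_0$-vertices you keep a free coordinate $n_0$ summed over all of $\N$; passing from pointwise estimates on the summands to the ratio of the two infinite sums requires uniform control and a concentration statement for $n_0$ around $\big(p_\alpha(A_0)/p_\alpha(A_0^c)\big)|\bn|$, which again is not free under the minimal moment assumptions. So the proposal points in the right direction but has a genuine gap at its decisive step.
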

To be complete, we also refer to Lemma~\ref{lem:a-admiss} on the
existence of a sequences of
$\bn$ in  $\N^J$ satisfying the hypothesis above.

The proof of Theorem~\ref{th:main-intro}  relies on two ingredients. The
first  one is  the use  of Rizzolo's  transformation from~\cite{r15}  to
reduce the problem to the case  $A_0$ empty. The second is the existence
of  a   local  limit  for   multi-type  BGW  tree  conditioned   to  the
sub-populations of  each type to be  large (with proportion given  by the
positive    left   eigenvector    of   the    mean   matrix)    obtained
in~\cite{ad14b}.    One    could    also     use    the    results    of
P\'enisson~\cite{p16}, but  this would require stronger  hypothesis, see
Remark~\ref{rem:related}  for further  comments.   Let  us mention  that
Corollary~3.5 in  Abraham, Delmas  and Guo~\cite{adg18}  gives   Theorem~\ref{th:main-intro} in the very specific case
where $\mu(p)=1$, $p(A_0)=0$,  and the
direction $\alpha$ is the one naturally given by $p$: $\alpha_i=p(A_i)$.

\begin{rem*}[Conditioning on the total size and the number of leaves]
  \label{rem:leaves-intro}
  Motivated by the scaling limits of  BGW trees conditioned on its total
  progeny  and  the  number  of  leaves  to  be  both  large  considered
  in~\cite{k23} and~\cite{km23}, we give as a consequence of the Theorem
  above   the  corresponding   local  limit   of  such   BGW  trees   in
  Remark~\ref{rem:restriction}.  Conditioning on  the total  progeny and
  the number of  leaves amount to consider $A_1=\N^*  \cap \supp(p)$ and
  $A_2=\{0\}$.   Notice  the  directions  $\alpha$  can  be  written  as
  $(a, 1-a)$.  As explained in Remark~\ref{rem:restriction},  we have to
  consider two cases.

If  the
support of $p$ is reduced to two elements say $0$ and $k$ (with $k\geq
2$ as $p$ is non-trivial), then
Hypothesis~\ref{it:Hlast} is not satisfied.  However in this case the
conditioning is equivalent to conditioning on the total size and the
existence of the local limit is then given by~\cite{j12}
and~\cite{ad14a}. Furthermore there is only one possible direction for
which $p$ is generic; it is given by $a=1/k$.

If  the
support of $p$ is not reduced to two elements, then provided the
smallest subgroup in $\Z$ containing  $\{x-y\, \colon\, x, y\in
\supp(p)\cap \N^*\}$ is $\Z$ itself, then  Hypothesis~\ref{it:Hlast} holds. In
this case, Hypothesis~\ref{it:Hdir-poss} is satisfied if and only if
$a\in (0, 1)$. It is
easy to check that $p$ is
generic in the direction $\alpha$ if and only if there exists a positive
finite
root (which is then $\qc$) to the equation:
\[
  g(\theta)=p(0)+ a \theta g'(\theta).
\]
The   critical  probability   measure  $p_\alpha$  is   given  by
$p_\alpha(0)= 1-a$  and $p_\alpha(n)= \qc^{n -1}  p(n)/ g'(\qc)$
for $n\in \N^*$; and we can apply  Theorem~\ref{th:main-intro}.
\end{rem*}

  \begin{rem*}[On the strict convergence of the sequence $\bn$ to the
    direction $\alpha$]
    \label{rem:tbd}
    Assume  that  the  offspring  distribution $p$  is  generic  in  the
    direction  $\alpha$ and  that $\alpha$  has some  zero entries.   We
    provide in  Section~\ref{sec:a=0-n>0} an example where  removing 
    Condition~\eqref{eq:cond-zero-intro}    (that    is,   $n_j=0$    if
    $\alpha_j=0$) on  the sequence of $\bn=(n_i)_{i\in  \II{1, J}}$ such
    that $ \lim_{|\bn|\rightarrow \infty } \bn/|\bn|=\alpha$ prevents to
    get    the   local    limit   of    conditioned   BGW    tree   from
    Theorem~\ref{th:main-intro}.
\end{rem*}

\begin{rem*}[On non-generic distribution]
  \label{rem:gen-intro}
  If $p$ is  not generic in the possible direction  $\alpha$ because of Condition~\ref{item:0inA0-v2} in Theorem~\ref{thm:generic}, then as in
  the case $J=1$ studied in~\cite{ad14b}, we conjecture the existence of
  a condensation phenomenon at the limit: the existence of a node of the
  local limit at finite height with  an infinite degree.  The first step
  to prove  this would be  considering the condensation  for non-generic
  multi-type BGW trees. Notice that the others conditions in Theorem~\ref{thm:generic} might happen for probability distributions  with bounded supports, see Remark~\ref{rem:admi-a}~\ref{item:non-gene-fin}. 
\end{rem*}

\medskip

The  rest  of the  paper  is  structured  as  follows. we  introduce  in
Section~\ref{sec:notation}  the general  notation and  the framework  of
discrete  trees,   BGW  trees   and  Kesten's   tree.   We   define  the
$(p,\ca)$-compatible         probability        distributions         in
Section~\ref{sec:def-pqb} and characterize  all the $(p,\ca)$-compatible
probability distributions in  Section~\ref{sec:compatible} (handling the
degenerate cases $\theta\in \{0, +\infty \}$ and the case $0\not \in A_0$
are delicate). We study in Section~\ref{sec:critical-p} the existence of
the  critical  parameter $\qc$  and  thus  the probability  distribution
$p_\alpha$.  Eventually, we prove the main theorem in Section~\ref{sec:main}
see Theorem~\ref{thm:main-thm} as well as Remark~\ref{rem:tbd}.

\section{Notation}
\label{sec:notation}
\subsection{General notation}
 We denote by $\R_+^*=(0,\infty)$  (resp. $\N^*=\{1, 2, \ldots\}$) the
 set of positive real numbers (resp. integers) and by
$\R_+=[0,\infty)$  (resp. $\N=\{0, 1, \ldots\}$) the set of
nonnegative real numbers (resp. integers). For $i,j\in \N$ such that
$i\leq j$, note $\II{i, j}=\N\cap [i, j]$.  Let $J\in \N^*$. For
$\bx=(x_j)_{j\in \II{1, J}}\in \R^J$, we set $|\bx|=\sum_{j=1}^J |x_j|$. Let:
\[
  \Delta_J = \left\{\bx \in \R_+^J\, \colon\,
|\bx|= 1\right\}.
\]
We set $\un\in  \R^J$ (resp. $\zero\in \R^J$) the vector  of $\R^J$ with
all its coordinates equal to 1 (resp. 0).

\medskip
  Let $p = (p(n))_{ n \in \N} $ be a probability distribution on
  $\N$ and $\supp(p)=\{n\in\N: p(n)>0\}$ be its support.
  For $A \subset \N$, we set:
 \[
   p(A)=\sum_{n\in A} p(n)
   \quad\text{and}\quad
   g_A(r)=\sum_{n\in A} r^n p(n)
   \quad\text{for $r\geq 0$},
 \]
where the sum over an empty set is 0 by convention. In particular, we
have  $g_A=0$ for any set $A\subset \N$
such that $p(A)=0$.
 We also denote by $\rho_A$ the radius of convergence of $g_A$:
 \begin{equation}\label{eq:def-rho-A}
   \rho_A=\sup\{r\geq 1\, \colon\, g_A(r)<+\infty \}.
 \end{equation}
  For simplicity, when $A = \N$, we write $g(r) = g_{\N}(r)$ and
  $\rho=\rho_\N$. We write the mean of $p$ by:
  \[
    \mu(p) = \sum_{n \in \N} np(n).
  \]
We say that a probability distribution $p$ is
\emph{critical}  (resp. \emph{sub-critical}) if   $\mu(p) = 1$
(resp. $\mu(p) < 1$).
The probability distribution  $p$ is \emph{non-trivial}  if:
   \begin{equation}
   \label{assumption-p}
0< p(0) \quad\text{and}\quad
  p(0) + p(1) < 1.
\end{equation}

\subsection{The set of discrete trees}
We consider ordered rooted trees in the framework of
Neveu \cite{n86}. More precisely, let $\cu=\bigcup_{n\geq 0} (\N^*)^n$
be the set of finite sequences of positive integers with the convention
that $(\N^*)^0=\{\emptyset\}$. Note $H(u)=n$ the generation or the height of $u$ if
$u=(u_1,\ldots, u_n)\in (\N^*)^n$. For $u,v\in \cu$, denote by $uv$
the concatenation of $u$ and $v$, with the convention that $uv = u$
if $v = \emptyset$ and $uv = v$ if $u=\emptyset$.
 The set of ancestors of $u$ is the set:
\[
\text{An}(u)=\{v\in\cu: \text{there exists}\ w\in\cu\ \text{such that}\
u=vw\}.
\]
The most recent common ancestor of $\bs\subset\cu$, denoted by $M(\bs)$,
is the unique element $u$ of $\cap_{u\in\bs} \text{An}(u)$ with maximal height $H(u)$.
Let $\prec$ be the usual lexicographic order on $\cu$.

A tree $\bt$ is a subset of $\cu$ that satisfies: $\emptyset \in\bt$; if
$u\in\bt$,  then  $\text{An}(u)\subset\bt$;   for  $u\in\bt$,  there  exists
$k_u(\bt)\in\N^*$, called the out-degree of $u$, such that, for
every $i\in\N^*$,  $ui\in\bt$ if  and only  if $i\in  \II{1, k_u(\bt)}$.
The  vertex  $\emptyset$  is  called  the root  of  $\bt$.   The  vertex
$u\in \bt$  is called a leaf  if $k_u(\bt)=0$.  We set  $k_u(\bt)=-1$ if
$u\not\in \bt$.
Let  $L_A(\bt)$  be the  number of vertices of  the tree $\bt$
whose out-degree   belongs to  $A \subset \N$:
\[
  L_A(\bt)=\Card(\cl_A(\bt))
  \quad\text{with}\quad
  \cl_A(\bt)=\{u\in\bt:\, k_u(\bt)\in A\}.
\]
We simply write $\sharp \bt=L_\N(\bt)$ for the cardinal of $\bt$, and $L_n(\bt)$ for
$L_{\{n\}}(\bt)$ when $n \in \N$.
Remark that we have:
\begin{equation}
  \label{eq:kt=t-1}
\sum_{u\in \bt} k_u(\bt)=\sharp \bt-1.
\end{equation}
Then we get from~\eqref{eq:kt=t-1}:
\begin{equation}
  \label{eq:kt=L0-1}
L_{0}(\bt)=1+ \sum_{k\in \N^*} (k-1) L_{k}(\bt) .
\end{equation}
For $u\in\bt$, we define the  subtree above $u$ by $\{v\in\cu\, \colon\,
uv\in\bt\}$ and the fringe subtree by:
\begin{equation}
  \label{eq:def-fringe}
\bs=\{uv\in \cu\, \colon\,  uv\in\bt\}.
\end{equation}

We denote by $\T$ the set of trees,  $\T_0$ the subset of finite
trees, and $\T_1$  the set of trees with only one  infinite branch:
\[
  \T_1=\left\{\bt\in \T\setminus \T_0\, \colon\, \lim_{n\rightarrow
  \infty } H\big(M(\{u\in \bt\, \colon\, H(u)=n\})\big) =\infty \right\}.
\]
Let $H(\bt)=\sup\{H(u)\, \colon u\in \bt\}$ be the height of the
tree $\bt$; and for $h\in \N^*$, let
$\T^{(h)}=\{\bt\in \T\, \colon\, H(\bt)\leq  h\}$
be the set of trees with height less or equal to $h$.

\subsection{Local convergence of trees}\label{sec:lct}

For $h\in \N$ and a tree $\bt\in \T$, let $r_h(\bt)=\{u\in \bt\,
\colon\, H(u)\leq  h\}$ be the tree $\bt$ truncated at level $h$.
Let $(T_n)_{n\in \N}$ and $T$ be $\T$-valued random variables.
We say that the sequence  $(T_n)_{n\in \N}$ converges locally in distribution
towards $T$ if:
\begin{equation}\label{eq:cv_law}
\forall h\in\N,\ \forall
\bt\in\T^{(h)},\quad  \lim_{n\rightarrow+\infty }
\P\bigl(r_h(T_n)=\bt\bigr)
= \P\bigl(r_h(T)=\bt\bigr),
\end{equation}
and  writing $\text{dist}(T)$ for  the distribution of
the random variable $T$, we denote it by:
\[
\lim_{n\to\infty} \text{dist}(T_n)=\text{dist}(T).
\]

For $\bt\in\T$ and $x\in\cl_0(\bt)$, we consider a  convergence determining
 class of trees $\T(\bt,x)=\{\bt\circledast (\tilde{\bt},x): \tilde{\bt}\in \T\}$,
 where:
\[
\bt\circledast (\tilde{\bt},x)=\{u\in\bt\}\cup\{xv: v\in\tilde{\bt}\}
\]
is the tree obtained by grafting $\tilde{\bt}$ on the leaf $x$ of $\bt$.
We recall from~\cite[Lemma~2.1]{ad14a}, that if $(T_n)_{n\in \N}$ and $T$ are
$\T_0\cup \T_1$-valued random variables, then the sequence $(T_n)_{n\in \N}$
converges locally in distribution
towards $T$ if and only if for all  $\bt\in\T_0$ and $x\in\cl_0(\bt)$:
\[
  \lim_{n\rightarrow \infty } \P(T_n\in \T(\bt, x))= \P(T\in \T(\bt,
  x))
  \quad\text{and}\quad
 \lim_{n\rightarrow \infty } \P(T_n=\bt)= \P(T=\bt).
\]

\medskip

\subsection{BGW trees}

Let $p$  be a  probability distribution on  $\N$.  A  $\T$-valued random
variable  $\tau$  is a  BGW  tree  with  offspring distribution  $p$  if
$k_\emptyset(\tau)$ is distributed as $p$  and the branching property is
satisfied:       for      $n\in       \N^*$,      conditionally       on
$\{k_\emptyset(\tau) = n\}$, the subtrees $(S_1(\tau),\ldots,S_n(\tau))$
are independent and distributed as $\tau$.  We denote by $\ct_p$ the BGW
tree with offspring distribution $p$. For all finite tree $\bt\in \T_0$,
we have:
\begin{equation}
   \label{eq:loi-Tp}
  \P(\ct_p = \bt) = \prod_{u\in \bt} p(k_u(\bt))
  =\prod_{n\in \N} p(n)^{L_n(\bt)},
\end{equation}
with the convention that $0^0=1$.
When~\eqref{assumption-p} holds and $p$ is critical or sub-critical,
then a.s.\ $\ct_p$  is  finite (that is, $\ct_p\in \T_0$) and in this case~\eqref{eq:loi-Tp}
 completely characterizes
the distribution of $\ct_p$.

\subsection{Kesten's tree}
\label{sec:kesten}
Let   $p$    be   a   critical   probability    distribution   on   $\N$
  (and thus  $\mu(p)=1$) satisfying~\eqref{assumption-p}.   We denote  by
$p^*=(p^*(n)=np(n))_{n\in    \N}$    the    corresponding    size-biased
distribution.    The  so   called   Kesten's  tree,   $\ct_p^*$,  is   a
$\T_1$-valued random  tree defined as  the local limit  in distribution,
when $n$  goes to  infinity, of  a BGW tree  conditioned to  have height
larger than $n$:
\[
\lim_{n\to\infty} \text{dist}(\ct_p|\, H(\ct_p)=n)=\text{dist}(\ct^*_p).
\]
Informally, it is the skeleton of a
two-type BGW tree, where: individuals are of type
$\mathrm{s}$ (survivor) or $\mathrm{n}$ (normal);
the root is of type $\mathrm{s}$; each individual
of type  $\mathrm{s}$ has a random number of children with offspring
distribution $p^*$, all of them of type $\mathrm{n}$ but for one uniformly chosen
at random which is of type $\mathrm{s}$;  each individual
of type  $\mathrm{n}$ has a random number of children with offspring
distribution $p$, all of them of type $\mathrm{n}$. Its distribution is
completely characterized by $\P(\ct_p^*\in \T_1)=1$ and:
\begin{equation}
   \label{eq:loi-Tp*}
   \P(\ct_p^*\in \T(\bt,x))=\frac{\P(\ct_p=\bt)}{p(0)}
   \quad\text{for all}\quad
\bt\in\T_0\quad\text{and}\quad
x\in \cl_0(\bt).
\end{equation}

\section{Definition of the distribution $\tilde p_{\theta, \beta}$}
\label{sec:def-pqb}
  Let     $p$    be     a    probability     distribution    on     $\N$
  satisfying~\eqref{assumption-p}. Let $\ca=(A_j)_{j\in \II{1, J}}$, with
  $J\in \N^*$, be pairwise disjoint non-empty subsets of $\supp(p)$.
  Note    $   A_0$   the   complementary   of
$\cup _{j\in \II{1,  J}} A_j$ in $\supp(p)$. Notice $A_0$  may be empty.
For $\cj\subset \II{0, J}$, we set:
\[
  A_{\cj}=\bigcup _{j\in \cj} A_j.
\]

 For a probability
distribution  $q$ on $\N$, we  write:
\[
  q(\ca) = (q(A_1), \ldots, q(A_J))\in [0, 1]^J.
\]
For a finite tree $\bt$, we write:
\[
  L_\ca(\bt) = (L_{A_1}(\bt), \ldots, L_{A_J}(\bt))\in \N^J.
\]

We  extend  the  definition  of generic  probability  distribution  with
respect  to a  subset of  $\N$  in \cite{ad14a}  to generic  probability
distribution with respect to a family of subsets.

\begin{defi}[Generic probability distribution]
  \label{defi:on-p}
  Let     $p$    be     a    probability     distribution    on     $\N$
  satisfying~\eqref{assumption-p}. Let $\ca=(A_j)_{ j\in \II{1, J}}$, with
  $J\in \N^*$, be pairwise disjoint non-empty subsets of $\supp(p)$.
\begin{propenum}		
\item\textbf{$(p,\ca)$-compatible probability distribution.}
    \label{defi:equiv}
 We say that a probability distribution  $p'$ is $(p,\ca)$-compatible
    if $p'$ satisfies~\eqref{assumption-p}, $\supp(p')\subset
    \supp(p)$,   and for all $\bn \in \N^J$ such that
    $\P(      L_{\ca}(\ct_{p'})     =     \bn)>0$      (and     thus
$\P( L_{\ca}(\ct_p) = \bn)>0$), we have:
    \begin{equation}
      \label{equi-cond}
\dist\left(\ct_p\, \big|\,  L_{\ca}(\ct_p) = \bn\right)
= \dist\left(\ct_{p'}\, \big|\,  L_{\ca}(\ct_{p'}) = \bn\right).
\end{equation}

\item\textbf{Generic distribution.}
    \label{defi:generic}
    We say that  $p$ is generic for $\ca$ in  the direction
    $\alpha\in \Delta_J$ if
    there exists a  \emph{critical} $(p, \ca)$-compatible probability
    distribution $p'$ such that:
     \[
       p'(\ca)= \left(1-p'( A_0)\right) \, \alpha
       \quad   \text{with}\quad
       p'(A_0)<1.
    \]

 \end{propenum}

\end{defi}

\begin{rem}[On the one-dimensional case]
  \label{rem:dim=1}
  When $J=1$, Definition \ref{defi:on-p} \ref{defi:generic} reduces to the definition
  of generic probability distribution with respect to the set
  $A_1\subset \N$
 with $\alpha=1$.
\end{rem}

\begin{rem}[Positivity of $\alpha$]
  \label{rem:0inAj}
  If $p$ is generic for $\ca$ in the direction $\alpha\in \Delta_J$ with
  $0\in   A_{j}$   for   some   $j\in   \II{1,   J}$,   then   we   have
  $\alpha_j>0$.   Indeed,  the   probability   distribution  $p'$   from
  Definition~\ref{defi:on-p}~\ref{defi:generic} is $(p, \ca)$-compatible
  and thus $ p'(A_j)= p'(A_0^c) \, \alpha_j$ with $ p'(A^c_0) >0$; since
  $p'$      also satisfies~\eqref{assumption-p},      we     deduce      that
  $p'(A_j)\geq p'(0)>0$ which then  gives $\alpha_j>0$.
\end{rem}

We now introduce a set of parameters which will allow us to describe all
the compatible probability distributions. We set:
\[
  \cj_{\infty }=\{j\in \II{1, J}\, \colon\, \sup A_j <\infty \}.
\]

\begin{defi}[The set of parameters $\param(p, \ca)$]
  \label{defi:p-qb}
   Let $p$ be a probability distribution on $\N$
  satisfying~\eqref{assumption-p}. Let $\ca=(A_j)_{j\in \II{1, J}}$,
  with $J\in \N^*$, be pairwise disjoint non-empty subsets of $\supp(p)$.
 For $\beta=(\beta_1, \ldots, \beta_J)\in  \R_+^J$, we set:
 \begin{equation}
   \label{eq:def-cj}
\cj^*_\beta=\{j\in \II{1, J}\, \colon\, \beta_j>0\}
   \quad\text{and}\quad
   \cj_\beta=\{0\}\cup \cj^*_\beta.
  \end{equation}
The set $\param(p, \ca)\subset [0, +\infty ]\times \R^J_+$ of
parameters is defined as follows.

\begin{enumerate}[(i)]
   \item \textbf{Non degenerate case.}
 The parameter
  $(\theta, \beta)\in \R_+^* \times \R^J_+$ belongs to $\param(p, \ca)$ if:
   \begin{equation}
   \label{eq:def-cond-prob}
    \sum_{j\in \cj_\beta} \beta_j\,  g_{A_j}(\theta)=1
    \quad\text{where}\quad
    \beta_0=\theta^{-1 }.
  \end{equation}

\item \textbf{Degenerate case.}
The parameter $(\theta, \beta)$, with $\theta\in \{0, +\infty \}$
 and $\beta\in  \R^J_+$, belongs to $\param(p, \ca)$ if:
   \begin{equation}
   \label{eq:def-cond-prob-0}
    \sum_{j\in \cj_\beta} \beta_j=1
\quad\text{where}\quad
\beta_0=p(1)\, \ind_{\{1\in A_0\}},
 \end{equation}
 and:
 \begin{align}
   \label{eq:def-cond-struct-0}
&\text{if $\theta=0$, then}  \quad   0\not\in A_0 ,\\
   \label{eq:def-cond-struct-infini}
&   \text{if $\theta=+\infty $, then}\quad
   A_0\subset \{0,1\}
   \quad\text{and}\quad
   \cj^*_\beta\subset   \cj_{\infty }.
   \end{align}
\end{enumerate}
\end{defi}

For simplicity, we shall write $\cj^*$ and $\cj$ for $\cj^*_\beta$ and
$\cj_\beta$. Notice that $\cj^*$ might be empty in the non degenerate
case, and that $\cj^*$ is non empty in the degenerate cases thanks
to~\eqref{assumption-p}.

We now define the family of probability distribution indexed by the
parameter $(\theta, \beta)$.

\begin{defi}[Probability distribution $\tilde{p}_{\theta,\beta}$]
  Let     $p$    be     a    probability     distribution    on     $\N$
  satisfying~\eqref{assumption-p}. Let $\ca=(A_j)_{ j\in \II{1, J}}$, with
  $J\in  \N^*$, be  pairwise disjoint  non-empty subsets  of $\supp(p)$.
  For  $(\theta, \beta)\in  \param(p, \ca)$,  we define  the probability
distribution
  $\tilde{p}_{\theta,\beta}=(\tilde{p}_{\theta, \beta}(n))_{  n\in \N}$ as
  follows, where the unspecified probabilities are set  to be 0.
\begin{enumerate}[(i)]
   \item \textbf{Non degenerate case.} If $\theta\in \R_+^*$, then we set:
\begin{equation}
   \label{eq:def-pt}
 \boxed{ \tilde{p}_{\theta, \beta} (n)=\beta_j \,  \theta^{n}p(n)}
  \quad\text{for  $j\in \cj$ and $n\in A_j$.}
\end{equation}
\item \textbf{Degenerate case at 0.} If $\theta=0$, then we set:
\begin{equation}
   \label{eq:def-pt0}
 \boxed{ \tilde{p}_{0, \beta} (n)=\beta_j }
  \quad\text{for  $j\in \cj$ and $n=\min  A_j$.}
\end{equation}
\item \textbf{Degenerate case at infinity.} If $\theta= +\infty $,
then we set:
\begin{equation}
   \label{eq:def-pt-infty}
 \boxed{ \tilde{p}_{\infty, \beta} (n)=\beta_j }
  \quad\text{for  $j\in \cj$ and $n=\max A_j$.}
\end{equation}
\end{enumerate}
\end{defi}

Conditions~\eqref{eq:def-cond-prob} and
~\eqref{eq:def-cond-prob-0}         insures           that
$\tilde{p}_{\theta,\beta}$   is     indeed       a   probability    distribution.
In the next remark, we consider some particular
cases of the probability
distributions $\tilde{p}_{\theta, \beta}$.  For
convenience,   for the BGW trees,              we                write:
\[
  \ct_{\theta,  \beta}=\ct_{\tilde{p}_{\theta,  \beta}}.
\]
Recall that $g_{A_0}=0$  if $A_0=\emptyset$. We shall  consider the
equation $ g_{A_0}(\theta)=\theta$ on $\R_+$ which has at least one root
and at most two. It is elementary to check that:
\begin{equation}
  \label{eq:def-qmin0}
  \theta_{\min}=\min \{\theta\in \R_+\, \colon\,
g_{A_0}(\theta)=\theta\}\in [0, 1),
\end{equation}
and that the second  root, if it exists, belongs to  $(1, +\infty )$. It
is elementary to check the following result.
\begin{lem}
   \label{lem:q-min0} We have $\theta_{\min}=0$ if and only if
   $0\not\in A_0$.
\end{lem}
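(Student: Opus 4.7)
The plan is a short direct computation of $g_{A_0}(0)$. The only obstacle (if any) is keeping the empty-$A_0$ case separate, but even that is trivial.

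First I would handle the trivial case $A_0=\emptyset$: by convention $g_{A_0}\equiv 0$, so the equation $g_{A_0}(\theta)=\theta$ reduces to $\theta=0$, giving $\theta_{\min}=0$; and of course $0\notin A_0$ vacuously. So both sides of the equivalence hold.

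Next I would assume $A_0\neq \emptyset$. Since $g_{A_0}(\theta)=\sum_{n\in A_0}\theta^n p(n)$, only the $n=0$ term survives at $\theta=0$, hence
\[
  g_{A_0}(0)=p(0)\,\ind_{\{0\in A_0\}}.
\]
If $0\in A_0$, then by \eqref{assumption-p} we have $g_{A_0}(0)=p(0)>0$, so $\theta=0$ is not a root of $g_{A_0}(\theta)=\theta$, and by definition~\eqref{eq:def-qmin0} we conclude $\theta_{\min}>0$. Conversely, if $0\notin A_0$, then $g_{A_0}(0)=0$, so $\theta=0$ is a root of $g_{A_0}(\theta)=\theta$; since $\theta_{\min}$ is the minimal nonnegative root, this forces $\theta_{\min}=0$.

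Combining both cases yields the equivalence $\theta_{\min}=0\Longleftrightarrow 0\notin A_0$. The proof is essentially one line once one notes that $g_{A_0}(0)$ equals $p(0)$ or $0$ according to whether $0$ lies in $A_0$, and that $p(0)>0$ by the non-triviality assumption \eqref{assumption-p}.
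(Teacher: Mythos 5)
Your proof is correct and is exactly the elementary computation the paper has in mind (the paper states the lemma without proof, calling it elementary): $\theta_{\min}=0$ iff $0$ is a root of $g_{A_0}(\theta)=\theta$, iff $g_{A_0}(0)=p(0)\ind_{\{0\in A_0\}}=0$, which by~\eqref{assumption-p} happens precisely when $0\notin A_0$. Your separate treatment of $A_0=\emptyset$ via the convention $g_{A_0}\equiv 0$ is consistent with the paper's conventions and with the definition~\eqref{eq:def-qmin0}.
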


\begin{rem}[On particular cases]
  \label{rem:theta=0}
Recall  $\un$ (resp. $\zero$) denotes the vector of $\R^J$ with all its coordinates  equal to
1 (resp. 0).
  \begin{enumerate}[(a)]
 \item \textbf{The case $\theta=1$ and $\beta=\un$.} We trivially have $\tilde
   p_{(1, \un)}=p$, and thus $(1, \un)\in \param(p, \ca)$. In
   particular, we get
  $\ct_p=\ct_{1,\un}$.

\item  \label{item:q=0} \textbf{The  case $\theta=0$.}   The support  of
  $\tilde{p}_{0,\beta}$             is              equal             to
  $\{1\}\cup\{   \min   A_j\,  \colon\,   j\in   \II{1,   J}\}$  or   to
  $\{ \min A_j\,  \colon\, j\in \II{1, J}\}$ according  to $1$ belonging
  to $A_0$ or not.

\item \label{item:q=infty}
\textbf{The  case $\theta=\infty $.}   The support  of
$\tilde{p}_{\infty ,\beta}$             is              equal             to
$\{1\}\cup\{  \max  A_j\,  \colon\,  j\in \cj_\infty \}$ or
$\{  \max  A_j\,  \colon\,  j\in \cj_\infty \}$ according  to $1$ belonging
  to $A_0$ or not.
\item\label{item:b=0}     \textbf{The    case     $\beta=\zero$.}     If
  $(\theta, \beta)\in \param(p, \ca)$, then  we have that $\beta=\zero $
  is equivalent to $  \tilde{p}_{\theta,\zero}(A_0^c)=0$.  In this case,
  we   have   $\theta\in   (0,   +\infty   )$
  (as~\eqref{assumption-p}, which
  implies $p(1)<1$,
  and~\eqref{eq:def-cond-prob-0}       rule      out       the      case
  $\theta\in  \{0,  +\infty  \}$)
  and   that  $\theta$  is  a  root  of
  $g_{A_0}(\theta)=\theta$ by~\eqref{eq:def-cond-prob}.    Thus   $\theta$   can  take   the   value
  $\theta_{\min}$ (only  if $\theta_{\min}>0$ and then  $0\in A_0$), and
  possibly another value,  say $\theta_M\in (1,  +\infty )$.   We also
  have  that $\mu(\tilde  p_{\theta, \zero})=g'_{A_0}(\theta)$,  so that
  $\tilde{p}_{\theta_{\min} ,\zero}$ is  sub-critical and, if $\theta_M$
  exists, $\tilde{p}_{\theta_M ,\zero}$ is super-critical.
  In conclusion, for $ \tilde{p}_{\theta,\zero}$ not to be
  super-critical, we need $0 \in A_0$ and $\theta=\theta_{\min}$.

 \end{enumerate}
\end{rem}

In order (partially) remove  the particular case $\beta=\zero$, we set:
\begin{align}
   \label{eq:def-Pa**}
  \param^{**}(p, \ca)
  &=\{(\theta, \beta)\in \param(p, \ca)\, \colon\,\beta\neq\zero\},\\
  \param ^*(p, \ca)
  &=
    \begin{cases}
      \param^{**}(p, \ca)   &\text{if $0\not\in A_0$},\\
    \param^{**}(p, \ca)\, \cup\{(\theta_{\min}, \zero)\}  & \text{if $0\in
      A_0$}.
  \end{cases}
\end{align}
The introduction of the set $ \param ^*(p, \ca)$ is
motivated by the following result.

\begin{lem}[Conditional finiteness of $\ct_{\theta, \beta}$]
  \label{lem:TinT0}
  Let $(\theta, \beta)\in \param(p, \ca)$.
  We have:
  \[
    \P(\ct_{\theta, \beta}\not \in \T_0,\, \sup_{j\in \cj^*}
    L_{A_j}(\ct_{\theta,
      \beta})<+\infty )=0
    \quad\text{if and only if}\quad
    (\theta, \beta)\in \param^*(p, \ca).
  \]
\end{lem}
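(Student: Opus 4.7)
The statement splits naturally on whether $\beta=\zero$. When $\beta=\zero$ we have $\cj^*=\emptyset$, so $\sup_{j\in\cj^*} L_{A_j}(\ct_{\theta,\zero})$ is vacuously finite and the event in the lemma reduces to $\{\ct_{\theta,\zero}\notin\T_0\}$. When $\beta\neq\zero$, i.e.\ $(\theta,\beta)\in\param^{**}(p,\ca)\subset\param^*(p,\ca)$, I would show that the event in the lemma has probability zero unconditionally; thus the equivalence has genuine content only in the $\beta=\zero$ case.

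For $\beta\neq\zero$, pick any $j_0\in\cj^*$. From the definitions~\eqref{eq:def-pt},~\eqref{eq:def-pt0} and~\eqref{eq:def-pt-infty} one reads off $\tilde p_{\theta,\beta}(A_{j_0})>0$ in all three regimes; the condition $\cj^*\subset\cj_{\infty}$ in~\eqref{eq:def-cond-struct-infini} ensures $\max A_{j_0}<+\infty$ in the degenerate case $\theta=+\infty$, so the formula for $\tilde p_{\infty,\beta}$ is well posed at $A_{j_0}$. I would then use the standard random-walk coding of a BGW tree: let $(D_n)_{n\geq 1}$ be i.i.d.\ with law $\tilde p_{\theta,\beta}$ and $T=\inf\{n\geq 1\,\colon\,\sum_{k=1}^n(D_k-1)=-1\}$; then $\ct_{\theta,\beta}$ can be coupled with $(D_n)$ so that $\{\ct_{\theta,\beta}\notin\T_0\}=\{T=+\infty\}$ and $L_{A_{j_0}}(\ct_{\theta,\beta})\geq\sum_{n=1}^{T}\ind_{\{D_n\in A_{j_0}\}}$. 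On $\{T=+\infty\}$, Borel--Cantelli forces the right-hand side to be $+\infty$ almost surely, whence $\P(\ct_{\theta,\beta}\notin\T_0,\,L_{A_{j_0}}(\ct_{\theta,\beta})<+\infty)=0$, which a fortiori gives the vanishing required by the lemma.

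For $\beta=\zero$, I would invoke Remark~\ref{rem:theta=0}~\ref{item:b=0}: $\tilde p_{\theta,\zero}$ is supported on $A_0$, $\theta\in(0,+\infty)$ solves $g_{A_0}(\theta)=\theta$, and the mean of $\tilde p_{\theta,\zero}$ equals $g'_{A_0}(\theta)$. Convexity of $g_{A_0}$ gives at most two fixed points in $\R_+^*$: the smaller $\theta_{\min}$, at which $g'_{A_0}(\theta_{\min})\leq 1$ (subcritical or critical), and possibly $\theta_M>1$, at which $g'_{A_0}(\theta_M)>1$ (supercritical); by Lemma~\ref{lem:q-min0}, $\theta_{\min}>0$ if and only if $0\in A_0$, and in that case $\tilde p_{\theta_{\min},\zero}(0)=p(0)/\theta_{\min}>0$ so that the classical BGW extinction dichotomy applies. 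This yields $\P(\ct_{\theta_{\min},\zero}\in\T_0)=1$ and $\P(\ct_{\theta_M,\zero}\notin\T_0)>0$; and when $0\notin A_0$ no positive fixed point $\theta_{\min}$ exists, so $(\theta,\zero)\in\param(p,\ca)$ forces $\theta=\theta_M$ and the tree is supercritical. Hence $\P(\ct_{\theta,\zero}\notin\T_0)=0$ precisely when $0\in A_0$ and $\theta=\theta_{\min}$, matching exactly the additional element $(\theta_{\min},\zero)$ included in $\param^*(p,\ca)\setminus\param^{**}(p,\ca)$.

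The principal obstacle is bookkeeping rather than any real difficulty: one needs to verify carefully the positivity of $\tilde p_{\theta,\beta}(A_{j_0})$ across the three regimes for $\theta\in\{0\}\cup\R_+^*\cup\{+\infty\}$, and match the classification of fixed points of $g_{A_0}$ from Remark~\ref{rem:theta=0}~\ref{item:b=0} with the piecewise definition of $\param^*(p,\ca)$.
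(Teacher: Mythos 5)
Your proposal is correct and follows essentially the same route as the paper: split on $\beta=\zero$ versus $\beta\neq\zero$, observe that for $\beta\neq\zero$ an infinite tree a.s.\ has infinitely many vertices with out-degree in $A_{j_0}$ (the paper asserts this in one line where you supply the Łukasiewicz-walk/Borel--Cantelli details), and for $\beta=\zero$ invoke the sub/super-critical dichotomy of Remark~\ref{rem:theta=0}~\ref{item:b=0} to match the extra element $(\theta_{\min},\zero)$ in $\param^*(p,\ca)$. No gaps; the only difference is that you make explicit some standard facts the paper leaves implicit.
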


\begin{proof}
  If $\beta\neq  \zero$, then  there exists $j\in  \II{1, J}$  such that
  $\beta_j>0$.     This    gives    that     a.s.\    on    the    event
  $\{     \ct_{\theta,     \beta}\not     \in    \T_0\}$     we     have
  $L_{A_j}(\ct_{\theta, \beta})=+\infty  $
  and  thus
  $   \P(\ct_{\theta,   \beta}\not   \in   \T_0,\,   \sup_{j\in   \cj^*}
  L_{A_j}(\ct_{\theta, \beta})<+\infty )=0$.

  If $\beta=\zero$, we deduce from
  Remark~\ref{rem:theta=0}~\ref{item:b=0} that if $(\theta, \zero)\in
  \param(p, \ca)$ then we have either $\theta=\theta_{\min}>0$,
$(\theta, \zero)\in
  \param^*(p, \ca)$,  $\tilde p_{\theta, \zero}$ is
  sub-critical; or
  $\theta>\theta_{\min}$, $(\theta, \zero)\not \in
  \param^*(p, \ca)$ and  $\tilde p_{\theta, \zero}$ is
  super-critical. In the former case, we get $\P(\ct_{\theta, \zero}\not
  \in \T_0)=0$ and in the   latter case:
  \[
    \P(\ct_{\theta, \zero}\not \in \T_0,\, \sup_{j\in \cj^*}
    L_{A_j}(\ct_{\theta,
      \zero})<+\infty )= \P(\ct_{\theta, \zero}\not \in \T_0)>0.
  \]
  This gives the result.
\end{proof}

We shall now restrict our study to the case where $\tilde p_{\theta,
  \beta}$ is non trivial.

\begin{defi}[Compatible parameter]
  \label{defi:param-admiss}
  The parameter $(\theta,  \beta)$ is $(p, \ca)$-compatible if $(\theta,
  \beta)\in \param^*(p, \ca)$   and   the   probability   distribution
  $\tilde{p}_{\theta,   \beta}$ satisfies condition~\eqref{assumption-p}.
\end{defi}

We now characterizes the $(p, \ca)$-compatible parameters.

\begin{lem}[Characterization of the compatible parameters]
  \label{lem:tp-non-degenere}
  The   parameter   $(\theta,    \beta)\in \param^*(p, \ca)$   is
  $(p, \ca)$-compatible if and only if:
\begin{enumerate}[(i)]
\item\label{item:def-cond-struct-ndg} For $\theta\in (0, +\infty )$, we have $0\in A_\cj
  $ and $
      p\left(A_\cj\cap \{0, 1\}^c\right)>0$.
    \item\label{item:def-cond-struct-0-ndg}
      For $\theta=0$, we have $  0\in A_{\cj^*}$ and $
      \max_{j\in\cj^*}\min (A_j)>1$.
    \item \label{item:def-cond-struct-infini-ndg}
      For $\theta=+\infty $, we have $A_{j_0}= \{0\}$ for some $j_0\in
      \cj^*$ and $  \max_{j\in\cj^*}\max (A_j)>1$ (and $A_0\subset
      \{1\}$ and $\cj^*\subset \cj_\infty $).
\end{enumerate}
\end{lem}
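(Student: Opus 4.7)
The plan is a direct verification: for each of the three cases for $\theta$ I would unfold the explicit formula for $\tilde p_{\theta, \beta}$ given by~\eqref{eq:def-pt},~\eqref{eq:def-pt0},~\eqref{eq:def-pt-infty} and read off what the two halves of the non-triviality condition~\eqref{assumption-p}, namely $\tilde p_{\theta, \beta}(0)>0$ and $\tilde p_{\theta, \beta}(\{0,1\}^c)>0$, translate to. Since $(\theta, \beta)\in \param^*(p, \ca)$ is fixed throughout, the structural constraints~\eqref{eq:def-cond-struct-0} and~\eqref{eq:def-cond-struct-infini} are available in the two degenerate cases, and the normalizations~\eqref{eq:def-cond-prob}-\eqref{eq:def-cond-prob-0} automatically give a probability distribution, so only~\eqref{assumption-p} needs to be checked.

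In the non-degenerate case $\theta\in \R_+^*$, one has $\tilde p_{\theta, \beta}(n)=\beta_j\theta^n p(n)$ for $n\in A_j$, $j\in \cj$, and zero otherwise. Since $\beta_0=\theta^{-1}>0$ and $\beta_j>0$ for $j\in \cj^*$, the support of $\tilde p_{\theta, \beta}$ is exactly $A_\cj\cap \supp(p)$. Using $p(0)>0$ from~\eqref{assumption-p}, the condition $\tilde p_{\theta, \beta}(0)>0$ becomes $0\in A_\cj$, and the condition $\tilde p_{\theta, \beta}(\{0,1\}^c)>0$ becomes $p(A_\cj\cap \{0,1\}^c)>0$, giving~\ref{item:def-cond-struct-ndg}. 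For $\theta=0$, formula~\eqref{eq:def-pt0} shows $\tilde p_{0, \beta}$ is the atomic measure with mass $\beta_j$ at $\min A_j$ for $j\in \cj$, where $\beta_0=p(1)\ind_{\{1\in A_0\}}$. By~\eqref{eq:def-cond-struct-0} we have $0\notin A_0$, hence $\min A_0\geq 1$, and $\beta_0>0$ forces $\min A_0=1$; thus the $j=0$ atom never sits at $0$ and sits in $\{0,1\}$ whenever present. Therefore $\tilde p_{0, \beta}(0)>0$ is equivalent to $\min A_j=0$ for some $j\in \cj^*$, i.e.\ $0\in A_{\cj^*}$, and $\tilde p_{0, \beta}(\{0,1\}^c)>0$ is equivalent to $\min A_j\geq 2$ for some $j\in \cj^*$, i.e.\ $\max_{j\in \cj^*} \min A_j>1$, which gives~\ref{item:def-cond-struct-0-ndg}. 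The case $\theta=+\infty$ is perfectly analogous via~\eqref{eq:def-pt-infty}: the constraint $\cj^*\subset \cj_\infty$ in~\eqref{eq:def-cond-struct-infini} ensures each $\max A_j$ with $j\in \cj^*$ is finite, and $A_0\subset \{0,1\}$ together with $\beta_0=p(1)\ind_{\{1\in A_0\}}$ again confines the $j=0$ atom to $\{0,1\}$. Then $\tilde p_{\infty, \beta}(0)>0$ demands $\max A_j=0$ for some $j\in \cj^*$, i.e.\ $A_j=\{0\}$ for such $j$ (as $A_j$ is non-empty), and $\tilde p_{\infty, \beta}(\{0,1\}^c)>0$ demands $\max A_j\geq 2$ for some $j\in \cj^*$, i.e.\ $\max_{j\in \cj^*}\max A_j>1$, yielding~\ref{item:def-cond-struct-infini-ndg}.

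There is no deep obstacle here; the argument is essentially careful bookkeeping of the support of $\tilde p_{\theta, \beta}$. The one subtlety worth emphasising in the write-up is the asymmetry between $\cj$ and $\cj^*$: in the non-degenerate case the $j=0$ block contributes across all of $A_0$ (because $\beta_0=\theta^{-1}>0$ is automatic), so non-triviality is governed by $\cj$; in the two degenerate cases $\beta_0$ is either zero or confined to the atom $\{1\}\cap A_0$, so non-triviality at $0$ and strict non-triviality via $\{0,1\}^c$ are driven only by the $j\in \cj^*$ blocks. Noting that the case $\beta=\zero$ permitted in $\param^*(p, \ca)$ only occurs in the non-degenerate case (with $\theta=\theta_{\min}>0$ and $\cj=\{0\}$, thanks to Lemma~\ref{lem:q-min0}) confirms that the statement is covered by~\ref{item:def-cond-struct-ndg}, where the conditions reduce to $0\in A_0$ (which holds) and $p(A_0\cap \{0,1\}^c)>0$.
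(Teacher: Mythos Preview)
Your proof is correct and follows essentially the same approach as the paper: a direct case-by-case verification that the two halves of~\eqref{assumption-p} translate into the stated conditions on $\cj$ and $\cj^*$. One small imprecision worth tightening in the $\theta=+\infty$ case: you write that the $j=0$ atom is ``confined to $\{0,1\}$'', but to conclude that $\tilde p_{\infty,\beta}(0)>0$ forces some $j\in\cj^*$ with $A_j=\{0\}$ you actually need that the $j=0$ block puts no mass at $0$; indeed $\beta_0=p(1)\ind_{\{1\in A_0\}}$ together with $A_0\subset\{0,1\}$ shows that this atom is either absent or sits at $\max A_0=1$ (the paper makes exactly this observation).
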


\begin{proof}
By  construction, we have
  $\supp(\tilde  p_{\theta, \beta})\subset  \supp(p)$.  Since  $p(0)>0$,
  there  exists  $j_0\in  \II{0,  J}$ such  that  $0\in  A_{j_0}$.

  Let  $\theta\in  (0,  +\infty  )$.   The  condition  $0\in  A_\cj$  in
  Point~\ref{item:def-cond-struct-ndg}      is      equivalent      to
  $\beta_{j_0}>0$  and thus  to  $\tilde  p_{\theta, \beta}(0)>0$.   The
  condition   $p\left(A_\cj  \cap   \{0,   1\}^c\right)>0$  is   clearly
  equivalent  to $\tilde  p_{\theta, \beta}(\{0,  1\}^c)>0$.  Therefore,
  conditions   in~\ref{item:def-cond-struct-ndg}  are   equivalent  to
  $\tilde    p_{\theta,    \beta}$   satisfying    the    non-degeneracy
  condition~\eqref{assumption-p}.

  Let   $\theta=0$  (and   thus   $0\not  \in   A_0$).   The   condition
  $0\in   A_{\cj^*}$    in   Point~\ref{item:def-cond-struct-0-ndg}   is
  equivalent to $\beta_{j_0}>0$ and  to $\tilde p_{\theta, \beta}(0)>0$,
  as               $\min                A_{j_0}=0$.                Since
  $\{\min A_j\,  \colon\, j \in \cj^*\}\subset  \supp( \tilde p_{\theta,
    \beta}) \subset \{\min  A_j\, \colon\, j \in  \cj^*\} \cup\{1\}$, we
  deduce  that   $  \max_{j\in\cj^*}\min   (A_j)>1$  is   equivalent  to
  $\tilde   p_{\theta,   \beta}(\{0,    1\}^c)>0$.    Thus,   conditions
  in~\ref{item:def-cond-struct-0-ndg}       are      equivalent       to
  $\tilde p_{\theta, \beta}$ satisfying~\eqref{assumption-p}.

  Let   $\theta=+\infty   $.   The   conditions   $j_0\in   \cj^*$   and
  $A_{j_0}=\{0\}$      in Point~\ref{item:def-cond-struct-infini-ndg}      are
  equivalent   to  $\tilde   p_{\theta,  \beta}(0)>0$ (notice
  that, by~\eqref{eq:def-cond-prob-0},
  $\tilde p_{\theta, \beta}(A_0)>0$ if and only if $1\in A_0$, and
  then  $0\in A_0$ implies that $\tilde p_{\theta, \beta}(0)=0$).
 Eventually  the
  condition $  \max_{j\in\cj^*}\max (A_j)>1$ is also  clearly equivalent
  to     $\tilde     p_{\theta,    \beta}(\{0,     1\}^c)>0$.      Thus,
  conditions in~\ref{item:def-cond-struct-infini-ndg}   are  equivalent   to
  $\tilde p_{\theta, \beta}$ satisfying~\eqref{assumption-p}.
\end{proof}

\section{The  $(p,\ca)$-compatible probability
distributions}
\label{sec:compatible}

We identify all the  $(p,\ca)$-compatible probability
distributions.
Recall  that   $p$   is     a    probability     distribution    on     $\N$
  satisfying~\eqref{assumption-p} and  $\ca=(A_j)_{j\in \II{1, J}}$, with
  $J\in \N^*$, are pairwise disjoint non-empty subsets of $\supp(p)$.

  \begin{theo}[Characterization of the compatible probability distributions]
    \label{th:p,a-compatibility}
  Let    $    p$    be    a   probability    distribution    on    $\N$
  satisfying~\eqref{assumption-p}.  The probability distribution $p'$ is
  $(p,\ca)$-compatible if and only  if $p' = \tilde{p}_{\theta, \beta}$,
  for some $(p,\ca)$-compatible  parameter $(\theta,\beta)$. 
\end{theo}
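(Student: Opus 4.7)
The reverse implication is a direct computation. For $(\theta,\beta) \in \param^*(p,\ca)$ with $\theta \in (0,+\infty)$, applying \eqref{eq:loi-Tp} to $\tilde p_{\theta,\beta}$, together with the identity $\sum_u k_u(\bt) = \sharp\bt-1$ from \eqref{eq:kt=t-1} and the crucial substitution $\beta_0 = \theta^{-1}$, yields
\[
  \P(\ct_{\theta,\beta} = \bt) = \theta^{-1} \, \theta^{\sum_{j \in \cj^*} L_{A_j}(\bt)} \prod_{j \in \cj^*} \beta_j^{L_{A_j}(\bt)} \, \P(\ct_p = \bt)
\]
for any finite $\bt$ using only out-degrees in $\supp(\tilde p_{\theta,\beta})$. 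The choice $\beta_0 = \theta^{-1}$ removes the $L_{A_0}(\bt)$ contribution, so the ratio $\P(\ct_{\theta,\beta}=\bt)/\P(\ct_p=\bt)$ depends on $\bt$ only through $L_\ca(\bt)$; equality of conditional distributions given $\{L_\ca = \bn\}$ follows immediately. The degenerate cases $\theta \in \{0,+\infty\}$ are analogous using \eqref{eq:def-pt0} and \eqref{eq:def-pt-infty}: out-degrees are forced into a very restricted set and the ratio again factors through $L_\ca(\bt)$.

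\textbf{Forward direction.} Given a $(p,\ca)$-compatible $p'$, set $\phi(n) = p'(n)/p(n)$ on $\supp(p')$ and $f = \log \phi$. The compatibility condition forces the linear functional $\bt \mapsto \sum_n f(n) L_n(\bt)$ to be constant on every fiber $\{\bt \in \T_0: L_\ca(\bt)=\bn,\ \P(\ct_{p'}=\bt)>0\}$. A standard combinatorial construction shows that any finite-support nonnegative integer sequence $(l_n)_n$ in $\N^{\supp(p')}$ with $l_0 \geq 1$ and $\sum_n (n-1) l_n = -1$ is realized as the degree count of some tree $\bt$ with $\P(\ct_{p'}=\bt)>0$. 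Comparing two such sequences with identical $L_\ca$-values yields, for the difference $d_n$, the linear relations $\sum_{n \in A_i} d_n = 0$ for $i \in \II{1,J}$ and $\sum_n (n-1) d_n = 0$. By a duality argument, $f$ must lie in the span of $\{n \mapsto n-1\} \cup \{\ind_{A_i}\}_{i=1}^J$ on $\supp(p')$: there exist constants $c_0,c_1,\ldots,c_J$ with $f(n) = c_0(n-1) + \sum_i c_i \ind_{A_i}(n)$. Exponentiating and setting $\theta = e^{c_0}$, $\beta_i = e^{c_i}/\theta$ for $i \in \II{1,J}$ and $\beta_0 = \theta^{-1}$ recovers $p' = \tilde p_{\theta,\beta}$ in the form \eqref{eq:def-pt}.

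\textbf{Verification.} The normalization $\sum_n p'(n) = 1$ gives \eqref{eq:def-cond-prob}, so $(\theta,\beta) \in \param(p,\ca)$. The non-triviality \eqref{assumption-p} of $p'$, combined with Lemma~\ref{lem:tp-non-degenere}, yields the structural conditions making $(\theta,\beta)$ a $(p,\ca)$-compatible parameter. To confirm $(\theta,\beta) \in \param^*(p,\ca)$, I would argue by contradiction: if $\beta = \zero$ with $\theta \neq \theta_{\min}$, then by Remark~\ref{rem:theta=0}\,(d) and Lemma~\ref{lem:TinT0} the distribution $\tilde p_{\theta,\zero}$ is super-critical, producing infinite trees with positive probability under the unconditioned $\ct_{p'}$, which clashes with the a.s.\ finiteness of $\ct_p$ conditioned on $\{L_\ca = \zero\}$ in the compatible regime.

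\textbf{Main obstacle.} The principal difficulty is the forward direction: (i) the combinatorial realization lemma, which is standard but must be stated carefully so that the realizing trees actually lie in $\{\P(\ct_{p'}=\bt)>0\}$; and more substantively (ii) the handling of the degenerate cases $\theta \in \{0,+\infty\}$, where $\supp(p') \subsetneq \supp(p)$ and the linear-algebra argument determines $f$ only on the strict support $\supp(p')$. In those cases one must interpret the resulting constants as boundary limits and verify that the extracted $p'$ fits the specific forms \eqref{eq:def-pt0} or \eqref{eq:def-pt-infty}, checking the structural constraints of Lemma~\ref{lem:tp-non-degenere}~\ref{item:def-cond-struct-0-ndg}--\ref{item:def-cond-struct-infini-ndg}.
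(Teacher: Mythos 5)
Your reverse direction is essentially the paper's own argument (the computation with \eqref{eq:kt=t-1} and $\beta_0\theta=1$, as in Lemma~\ref{lem:pA-non-deg}); the only thing you gloss over is that one must also rule out contributions from infinite trees to $\{L_\ca=\bn\}$, which is exactly what Lemma~\ref{lem:TinT0} and the restriction to $\param^*(p,\ca)$ provide, and, in the degenerate cases, that the whole fiber under $p$ (not just under $\tilde p_{\theta,\beta}$) is forced to have the restricted degree profile.

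The forward direction, however, has a genuine gap. Your duality argument only ever compares pairs of trees that are both charged by $p'$, so it determines $f=\log(p'/p)$ \emph{on} $\supp(p')$ as $c_0(n-1)+\sum_i c_i\ind_{A_i}(n)$ — but it says nothing about \emph{which} supports $\supp(p')$ are possible, and that is the real content of the theorem: the distributions $\tilde p_{\theta,\beta}$ charge either all of $A_0$ and all of each charged $A_j$ ($\theta\in(0,\infty)$), or exactly the minima ($\theta=0$), or exactly the maxima ($\theta=+\infty$). Concretely, take $\supp(p)=\{0,2,3,4\}$, $A_1=\{0\}$, $A_2=\{2,3,4\}$, $A_0=\emptyset$, and a candidate $p'$ with $\supp(p')=\{0,2,4\}$: your linear relations are vacuous (three support points, three free constants), so your argument would "recover" $p'=\tilde p_{\theta,\beta}$, yet no $\tilde p_{\theta,\beta}$ has this support, and indeed such a $p'$ is not compatible — e.g.\ the fiber $\{L_{A_1}=5,\,L_{A_2}=2\}$ is attainable by $p'$ (profile with out-degrees $2$ and $4$) but also contains $p$-trees with two vertices of out-degree $3$, so the two conditional laws cannot agree. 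Detecting this requires comparing a $p'$-charged tree with a tree in the same fiber charged only by $p$; this is precisely what the paper's Lemma~\ref{lem:technique} (the positivity equivalences in \ref{item:tech-00}--\ref{item:tech-jj}) and the ensuing case analysis (min/max/all) in Lemmas~\ref{lem:pab-A0}--\ref{lem:pab-A1} accomplish, and it is the part your proposal relegates to "boundary limits" in the degenerate cases without an actual argument. Until you add a step showing that compatibility forbids gap supports within a charged $A_j$, partial vanishing on $A_0$, and mixed min/max patterns across classes, the conclusion $p'=\tilde p_{\theta,\beta}$ does not follow from the tilt form on $\supp(p')$.
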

As  being  $(p,\ca)$-compatible  implies by  definition  that $p$ is non-trivial (that is, Condition~\eqref{assumption-p} holds), we  shall only  consider
the probability distributions $\tilde  p_{\theta, \beta}$ satisfying the
conditions of Lemma~\ref{lem:tp-non-degenere}.  \medskip

The end  of this  section is devoted  to the proof  of this  result.  We
first prove  that $\tilde  p_{\theta, \beta}$  are $(p,\ca)$-compatible,
provided Condition~\eqref{assumption-p} holds:  see
Lemma~\ref{lem:pA-non-deg}   for   the    non-degenerate   cases   and
Lemma~\ref{lem:pA-deg} for  the degenerate  cases. Then we  prove that
all  $(p,\ca)$-compatible  probability  distributions are  of  the  form
$\tilde p_{\theta,  \beta}$, distinguishing  according to $0\in  A_0$ in
Lemma~\ref{lem:pab-A0}  or  $0\not  \in  A_0$ in
Lemma~\ref{lem:pab-A1}, where the proof of the latter  is  more  technical.

\medskip

We set $\be^0=\zero$ and for $j\in \II{1, J}$:
\[
     \be^j=(e_1^j, \ldots, e_J^j)\in \N^J
     \quad\text{with}\quad
     e^j_i=\ind_{\{i=j\}}.
\]
In  particular, we  have $\sum_{j=1}^J \be^j=\un$.   Notice that  $\P(\ct_p=\bt)>0$
implies  $\bt\in   \T_0$  and   $k_u(\bt)\in  A_{\II{0,  J}}$   for  all
$u\in \bt$.

\begin{lem}
   \label{lem:pA-non-deg}
   Under the  assumptions of Theorem~\ref{th:p,a-compatibility},  if the
   parameter   $(\theta,   \beta)$   is   $(p,   \ca)$-compatible   with
   $\theta\in  (0,   +\infty  )$,  then  the   probability  distribution
   $\tilde p_{\theta, \beta}$ is $(p, \ca)$-compatible.
\end{lem}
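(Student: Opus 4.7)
The plan is to show that, for any finite tree $\bt$, the ratio $\P(\ct_{\tilde p_{\theta,\beta}}=\bt)/\P(\ct_p=\bt)$ depends on $\bt$ only through $L_{\ca}(\bt)$; the $(p,\ca)$-compatibility will then follow from the very definition of conditional probability. First I would apply the product formula~\eqref{eq:loi-Tp} to each distribution: for $\bt\in\T_0$ with $\supp(\bt)\subset\supp(p)$ (so $k_u(\bt)\in A_{\II{0,J}}$ for every $u$), grouping vertices according to the unique $j\in\II{0,J}$ with $k_u(\bt)\in A_j$ yields
\[
  \P(\ct_{\tilde p_{\theta,\beta}}=\bt)
  =\prod_{u\in\bt}\beta_{j(u)}\,\theta^{k_u(\bt)}\,p(k_u(\bt))
  =\Bigl(\prod_{j\in\II{0,J}}\beta_j^{L_{A_j}(\bt)}\Bigr)\,
  \theta^{\sum_{u\in\bt}k_u(\bt)}\,\P(\ct_p=\bt).
\]
Using~\eqref{eq:kt=t-1} to replace the exponent of $\theta$ by $\sharp\bt-1$, then the identity $\sharp\bt=L_{A_0}(\bt)+|L_{\ca}(\bt)|$ together with $\beta_0=\theta^{-1}$, one obtains
\[
  \P(\ct_{\tilde p_{\theta,\beta}}=\bt)
  =\theta^{|L_{\ca}(\bt)|-1}\,
  \prod_{j\in\II{1,J}}\beta_j^{L_{A_j}(\bt)}\;\P(\ct_p=\bt),
\]
which is the crucial factorization.

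Next I would verify that the conditioning forces finiteness so that the above identity is sufficient to determine the conditional laws. Since $(\theta,\beta)$ is $(p,\ca)$-compatible we have $(\theta,\beta)\in\param^*(p,\ca)$, and on the event $\{L_{\ca}(\ct_{\theta,\beta})=\bn\}$ with $\bn\in\N^J$, the quantity $\sup_{j\in\cj_\beta^*}L_{A_j}(\ct_{\theta,\beta})\leq\max_j n_j$ is finite, so Lemma~\ref{lem:TinT0} yields $\ct_{\theta,\beta}\in\T_0$ almost surely on this event. The same argument applied to the parameter $(1,\un)\in\param^{**}(p,\ca)$ (for which $\cj^*=\II{1,J}$) gives $\ct_p\in\T_0$ almost surely on $\{L_{\ca}(\ct_p)=\bn\}$. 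Hence formula~\eqref{eq:loi-Tp}, and therefore the factorization above, is valid for almost every realization of both trees conditioned by $\{L_{\ca}=\bn\}$.

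Finally, for $\bn\in\N^J$ such that $\P(L_{\ca}(\ct_{\tilde p_{\theta,\beta}})=\bn)>0$, summing the factorization over all $\bt\in\T_0$ with $L_{\ca}(\bt)=\bn$ shows that this probability is positive iff $\P(L_{\ca}(\ct_p)=\bn)>0$, and the constant $\theta^{|\bn|-1}\prod_{j\in\II{1,J}}\beta_j^{n_j}$ cancels between the numerator $\P(\ct_{\tilde p_{\theta,\beta}}=\bt)$ and the denominator $\P(L_{\ca}(\ct_{\tilde p_{\theta,\beta}})=\bn)$, giving exactly~\eqref{equi-cond}. The support inclusion $\supp(\tilde p_{\theta,\beta})\subset\supp(p)$ is immediate from~\eqref{eq:def-pt}, and Lemma~\ref{lem:tp-non-degenere} together with Definition~\ref{defi:param-admiss} ensures $\tilde p_{\theta,\beta}$ satisfies~\eqref{assumption-p}. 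There is no real obstacle here beyond the bookkeeping of exponents and the appeal to Lemma~\ref{lem:TinT0} to dispense with potentially infinite trees under supercritical compatible parameters.
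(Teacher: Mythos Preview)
Your proof is correct and follows the same approach as the paper's: both compute the product formula for $\P(\ct_{\theta,\beta}=\bt)$, extract a multiplicative factor depending only on $L_\ca(\bt)$, invoke Lemma~\ref{lem:TinT0} to reduce to finite trees, and cancel this factor in the conditional law. You are in fact slightly more explicit than the paper in applying Lemma~\ref{lem:TinT0} to the parameter $(1,\un)$ to justify $\P(\ct_p\notin\T_0,\,L_\ca(\ct_p)=\bn)=0$, a step the paper uses silently in its final displayed equality.
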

\begin{proof}
We suppose that the  parameter $(\theta, \beta)$ is $(p,\ca)$-compatible
(which   implies  that   $\tilde   p_{\theta,   \beta}$  is non-trivial)      and       that
$\theta\in (0, +\infty  )$.  We prove that  the probability distribution
$\tilde    p_{\theta,   \beta}$    is   $(p,\ca)$-compatible.     Recall
$\cj=\{0\}\cup\{j\in \II{1, J}\, \colon\, \beta_j>0\}$.

Notice that $\P(L_{\ca}(\ct_{\theta, \beta}) = \bn)>0$  implies that
 $n_j=0$ for all $j\not\in \cj^*$, where $\bn=(n_1, \ldots, n_J)$.
Using the definition of
 $\tilde{p}_{\theta, \beta}$, we obtain  for
 $\bt\in \T_0$ such that $\P(\ct_p=\bt)>0$ and $L_{A_j}(\bt)=0$ for $j\not\in \cj$ that:
\begin{align*}
\P(\ct_{\theta, \beta} = \bt)
&= \prod_{u\in \bt} \tilde{p}_{\theta,\beta}(k_u(\bt))\\
 &= \prod_{j\in\cj} \, \prod_{u \in \cl_{A_j}(\bt)} \beta_j
 \theta^{k_u(\bt)} p(k_u(\bt)) \\
&= \P(\ct_p= \bt)\,  \theta^{\sharp \bt -1}
                       \prod_{j\in\cj}\beta_j^{L_{A_j}(\bt)}\\
  &= \P(\ct_p= \bt)\, \theta^{-1}  \prod_{j\in\cj^*}
    (\beta_j \theta )^{L_{A_j}(\bt)}.
\end{align*}
where we used~\eqref{eq:kt=t-1}
 for the third equality, and  $\sum_{j\in\cj}  L_{A_j}(\bt)= \sharp \bt$
 as well as $\beta_0 \theta=1$ for the fourth equality.
 As $\P(\ct_p=\bt)=0$ implies
$\P(\ct_{\theta, \beta}=\bt)=0$, we deduce that for $\bn\in \N^ J$:
\begin{align*}
 \P(L_{\ca}(\ct_{\theta, \beta}) = \bn)
  &= \sum_{\bt\in \T_0,\, L_{\ca}(\bt) = \bn} \P(\ct_{\theta, \beta}=\bt)
  + \P(\ct_{\theta, \beta}\not \in \T_0, \, L_{\ca}(\ct_{\theta, \beta}) = \bn)  \\
 &= \sum_{\bt\in \T_0,\, L_{\ca}(\bt) = \bn} \P(\ct_{\theta, \beta}=\bt)\\
&= \P\left(L_{\ca}(\ct_p) = \bn\right)\,  \theta^{-1} \prod_{j\in \cj^*}
(\beta_j\theta)^{n_j},
\end{align*}
where we used Lemma~\ref{lem:TinT0}
for the second equality.

So for every $\bn\in \N^J$
such that $\P(L_{\ca}(\ct_{\theta, \beta}) = \bn)>0$, we have
$\P(L_{\ca}(\ct_p) = \bn)>0$, and  for every
$\bt\in\T_0$ such that $L_{\ca}(\bt)=\bn$, we get:
\[
\P(\ct_{\theta, \beta}=\bt\,|\, L_{\ca}(\ct_{\theta, \beta})=\bn)
=\P(\ct_p=\bt\, |\,  L_{\ca}(\ct_p)=\bn).
\]
Hence, the probability distribution   $\tilde p_{\theta, \beta}$ is
$(p, \ca)$-compatible.
\end{proof}

\begin{lem}
   \label{lem:pA-deg}
Under the assumptions of Theorem~\ref{th:p,a-compatibility},
if the parameter $(\theta, \beta)$ is $(p,
  \ca)$-compatible with $\theta\in \{0, +\infty\}$,
then the probability distribution $\tilde p_{\theta, \beta}$ is $(p,
  \ca)$-compatible.
\end{lem}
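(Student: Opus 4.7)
The plan is to mimic the computation done for the non-degenerate case (Lemma~\ref{lem:pA-non-deg}), but first to contend with the fact that $\tilde p_{\theta,\beta}$ has a strictly smaller support than $p$ when $\theta \in \{0, +\infty\}$: by~\eqref{eq:def-pt0} (resp.~\eqref{eq:def-pt-infty}) together with $\beta_0 = p(1)\, \ind_{\{1 \in A_0\}}$, this support is the finite set $S := \{\min A_j \,\colon\, j \in \cj^*\} \cup (\{1\} \cap A_0)$ when $\theta=0$ (and symmetrically with $\max$ in place of $\min$ when $\theta=+\infty$). Hence a tree $\bt$ can satisfy $\P(\ct_{\theta,\beta} = \bt) > 0$ only if every $k_u(\bt)$ lies in $S$. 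For compatibility in the sense of Definition~\ref{defi:equiv}, one must therefore check that the conditioning $\{L_\ca(\cdot) = \bn\}$, which is weaker than prescribing each individual $L_n$, is already strong enough (under the hypothesis $\P(L_\ca(\ct_{\theta,\beta}) = \bn) > 0$) to force any $\bt$ with $\P(\ct_p = \bt) > 0$ and $L_\ca(\bt) = \bn$ to have all of its out-degrees in $S$. This structural lemma is the crux of the proof; the rest is bookkeeping analogous to Lemma~\ref{lem:pA-non-deg}.

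For $\theta = 0$: given a witness tree $\bt_0$ with $\P(\ct_{0,\beta}=\bt_0)>0$ and $L_\ca(\bt_0)=\bn$, all its out-degrees lie in $S$, so $L_{A_j}(\bt_0)=0$ for $j \notin \cj^*$, $L_{A_j}(\bt_0) = L_{\min A_j}(\bt_0) = n_j$ for $j \in \cj^*$, while any $A_0$-node has out-degree $1$ (only possible when $1\in A_0$). Substituting into~\eqref{eq:kt=t-1} yields the combinatorial identity $|\bn| = 1 + \sum_{j \in \cj^*}(\min A_j)\, n_j$. For an arbitrary $\bt$ with $\P(\ct_p=\bt)>0$ and $L_\ca(\bt)=\bn$, I lower bound $k_u(\bt) \geq \min A_{j(u)}$ at each node, where $j(u)$ is the unique index with $k_u(\bt) \in A_{j(u)}$. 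Since~\eqref{eq:def-cond-struct-0} gives $0 \notin A_0$, each $A_0$-node contributes at least $1$, with the refined bound $k_u \geq 2$ for $u \in \cl_{A_0 \setminus \{1\}}(\bt)$. Summing, using $\sharp \bt = L_{A_0}(\bt) + |\bn|$, and comparing with the identity above, I split into the case $1 \notin A_0$ (which immediately forces $L_{A_0}(\bt) = 0$) and $1 \in A_0$ (which, via the refined bound, forces $L_1(\bt) = L_{A_0}(\bt)$, hence all $A_0$-nodes have out-degree $1$). In either case every remaining inequality saturates, so $k_u(\bt) = \min A_{j(u)}$ for every $u$, i.e.\ all out-degrees of $\bt$ lie in $S$.

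With the structural lemma in hand, both $\P(\ct_p = \bt)$ and $\P(\ct_{0,\beta} = \bt)$ admit closed-form expressions depending only on $\bn$ and $L_{A_0}(\bt)$: namely $\P(\ct_p=\bt) = p(1)^{L_{A_0}(\bt)\ind_{\{1\in A_0\}}} \prod_{j \in \cj^*} p(\min A_j)^{n_j}$, and the analogous formula for $\tilde p_{0,\beta}$ with $\beta_j$ replacing $p(\min A_j)$ (using $\tilde p_{0,\beta}(1)=p(1)$ when $1\in A_0$). The ratio $\prod_{j \in \cj^*}(\beta_j/p(\min A_j))^{n_j}$ thus depends only on $\bn$. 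Summing over $\bt$, with Lemma~\ref{lem:TinT0} ensuring that the restriction $\ct_{0,\beta} \in \T_0$ costs nothing, propagates this ratio to $\P(L_\ca(\ct_{0,\beta})=\bn) / \P(L_\ca(\ct_p)=\bn)$; in particular $\P(L_\ca(\ct_p)=\bn)>0$, and the two conditional laws coincide, as required.

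The case $\theta=+\infty$ is entirely symmetric with $\min$ replaced by $\max$ and lower bounds replaced by upper bounds: Lemma~\ref{lem:tp-non-degenere}~\ref{item:def-cond-struct-infini-ndg} provides some $j_0 \in \cj^*$ with $A_{j_0} = \{0\}$, so combined with~\eqref{eq:def-cond-struct-infini} one has $A_0 \subset \{1\}$; and $\cj^* \subset \cj_\infty$ ensures $\max A_j < \infty$ on the relevant blocks, so the upper bounds $k_u(\bt) \leq \max A_{j(u)}$ are finite and meaningful. The forcing identity becomes $|\bn| = 1 + \sum_{j \in \cj^*}(\max A_j) \, n_j$ and the saturation argument is verbatim. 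The delicate point throughout is the structural lemma, whose proof crucially exploits $0 \notin A_0$ in the $\theta=0$ case (respectively $A_0 \subset \{1\}$ combined with $A_{j_0}=\{0\}$ for $\theta=+\infty$) to convert the identity $\sharp \bt - 1 = \sum_u k_u(\bt)$ into an equality that pins down every individual out-degree.
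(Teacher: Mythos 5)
Your proposal is correct and follows essentially the same route as the paper: derive the degenerate identity $\sum_{j\in\cj^*}n_j(\min A_j-1)=-1$ (resp.\ with $\max$) from~\eqref{eq:kt=t-1} and a witness tree, use it to force the out-degree profile of every tree charged by $\ct_p$ with $L_\ca(\bt)=\bn$, and then observe that the two laws differ only by a factor depending on $\bn$, the $A_0$-nodes being handled by $\tilde p_{\theta,\beta}(1)=p(1)$. The only differences are cosmetic: you treat $\theta=0$ in full and $\theta=+\infty$ by symmetry (the paper does the reverse), and you spell out the saturation argument that the paper states tersely.
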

\begin{proof}
  We first consider  the case $\theta=+\infty $.   For simplicity, write
  $p'=\tilde p_{\infty  , \beta}$  and $\ct'=\ct_{\infty ,  \beta}$.  We
  suppose    that    the    parameter     $(\infty    ,    \beta)$    is
  $(p,\ca)$-compatible.   As    $p'$ is non-trivial,          we          get          that
  $\beta\in \R^J_+\backslash\{\zero\}$  and thus  $\cj^*\neq \emptyset$.
  In    particular,    we     have    $A_0\subset\{1\}$,    thanks    to
  \eqref{eq:def-cond-struct-infini}                                  and
  Lemma~\ref{lem:tp-non-degenere}~\ref{item:def-cond-struct-infini-ndg}.
  We    prove    that    the   probability    distribution    $p'$    is
  $(p,\ca)$-compatible.

Notice that $\P(L_{\ca}(\ct') = \bn)>0$  implies that
$n_j=0$ for all $j\not\in \cj^*$, where $\bn=(n_1, \ldots, n_J)$,  and,
by~\eqref{eq:kt=t-1}, that:
\begin{equation}
  \label{dege-infinity}
\sum_{j\in \cj^*}n_j(\max A_j-1)=-1.
\end{equation}
To simplify notations, recall we write $L_k$ for $L_{\{k\}}$ for $k\in \N$.
Fix such $\bn\in \N^ J$ and consider the set $\T_{0, n}=\{ \bt\in \T_0\,
\colon\,L_{\ca}(\bt)=\bn \quad\text{and}\quad
\P(\ct'=\bt)>0\}$, which is clearly not empty. Using~\eqref{eq:kt=t-1},
we             get,      for $\bt\in \T_{0, n}$,       that
$ \sum_{j\in \cj^*} \sum_{k\in  A_j} L_{k}(\bt) (k-1)=-1$. We deduce
from~\eqref{dege-infinity}   that  for   $k\not   \in   A_0$  we   have:
$L_{k}(\bt)=n_j$   if    $j\in   \cj^*$ and  $k=\max A_j$,   and
$L_{k}(\bt)=0$ otherwise.

\medskip

Let $\bt\in \T_{0, n}$.
We distinguish two cases according to $1$ belonging to $A_0$ or not.
First, we consider the case
$1 \in A_0$, that is $A_0=\{1\}$, elementary computation gives:
 \[
    \P(\ct'=\bt \, |\,  L_{\ca}(\ct')=\bn)
    = c^{-1} \, p(1)^{L_{1}(\bt)},
  \]
  with, as $\cj^*\neq \emptyset$, $c$ positive and finite  given by:
  \[
    c=\sum_{\bt' \in \T_0 \, \text{ s.t.}\
        L_\ca(\bt')=\bn} p(1)^{L_{1}(\bt')}
+ \P(\ct'\not \in \T_0,  L_{\ca}(\ct')=\bn)
      =\sum_{\bt' \in \T_0 \, \text{ s.t.}\
        L_\ca(\bt')=\bn} p(1)^{L_{1}(\bt')}.
    \]
   Similarly, we have:
 \[
    \P(\ct_p=\bt \, |\,  L_{\ca}(\ct_p)=\bn)
    = c^{-1} \, p(1)^{L_{1}(\bt)}.
  \]
This readily implies that $p'$ is $(p, \ca)$-compatible.
\medskip

Secondly, we consider the case
   $1\not\in A_0$, that is    $A_0=\emptyset$. We       get   that the set
$\T_{0, n}$ is finite. Similarly to the first case, we obtain,       with
$c=\Card(\T_{0, n})\geq 1$, that:
\[
    \P(\ct'=\bt \, |\,  L_{\ca}(\ct')=\bn)
    =  \P(\ct_p=\bt \, |\,  L_{\ca}(\ct_p)=\bn)= c^{-1} .
\]
This readily also implies that $p'$ is $(p, \ca)$-compatible.

\medskip

Eventually, the case $\theta=0$ can be handled  similarly using
$  \sum_{j\in \cj^*}n_j(\min A_j-1)=-1$ instead of~\eqref{dege-infinity}.
\end{proof}

\begin{lem}
   \label{lem:pab-A0}
   Under the  assumptions of Theorem~\ref{th:p,a-compatibility},  if the
   probability   distribution   $p'$   is  $(p,   \ca)$-compatible   and
   $0\in  A_0$, then  we  have $p'=\tilde  p_{\theta,  \beta}$ for
   some    $(p,                    \ca)$-compatible
   parameter $(\theta,\beta)$.
\end{lem}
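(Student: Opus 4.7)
My plan is to use the $(p,\ca)$-compatibility to reduce the ratio $\P(\ct_{p'}=\bt)/\P(\ct_p=\bt)$ to a function of $L_\ca(\bt)$ only, and then to read off $\theta$ and $\beta$ by evaluating this ratio on a small family of reference trees whose $L_\ca$ values are easy to compute thanks to $0\in A_0$. Concretely, for $\bn\in\N^J$ with $\P(L_\ca(\ct_p)=\bn)>0$, set $\phi(\bn):=\P(L_\ca(\ct_{p'})=\bn)/\P(L_\ca(\ct_p)=\bn)$. The compatibility condition~\eqref{equi-cond} rewrites, for every $\bt\in\T_0$ with $\P(\ct_p=\bt)>0$, as $\P(\ct_{p'}=\bt)=\phi(L_\ca(\bt))\,\P(\ct_p=\bt)$, while $\P(\ct_p=\bt)=0$ forces $\P(\ct_{p'}=\bt)=0$ by $\supp(p')\subset\supp(p)$.

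I would then test this identity on two families of trees. First, the trivial tree $\bt^\emptyset:=\{\emptyset\}$ has $L_\ca(\bt^\emptyset)=\zero$ (because $0\in A_0$) and $\P(\ct_p=\bt^\emptyset)=p(0)>0$, yielding $\phi(\zero)=p'(0)/p(0)>0$. Second, for each $n\in\supp(p)$, let $\bt^{(n)}$ be the star tree formed of a root with $n$ leaf children. Then $\P(\ct_p=\bt^{(n)})=p(n)p(0)^n>0$, and since $0\in A_0$ the only vertex of $\bt^{(n)}$ whose out-degree is not in $A_0$ is the root, so $L_\ca(\bt^{(n)})=\be^{j(n)}$ with $j(n)\in\II{0,J}$ the unique index satisfying $n\in A_{j(n)}$ (with the convention $\be^0:=\zero$). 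Applied at $\bt^{(n)}$ the identity gives
\[
  p'(n)\,p'(0)^n\;=\;\phi(\be^{j(n)})\,p(n)\,p(0)^n.
\]
Setting $\theta:=p(0)/p'(0)\in(0,+\infty)$, $\beta_0:=\theta^{-1}=\phi(\zero)$, and $\beta_j:=\phi(\be^j)$ for $j\in\II{1,J}$, this rearranges into $p'(n)=\beta_{j(n)}\theta^n p(n)$ for every $n\in\supp(p)$, and hence into $p'=\tilde p_{\theta,\beta}$ on all of $\N$ thanks to $\supp(p')\subset\supp(p)$.

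It remains to verify that $(\theta,\beta)\in\param^*(p,\ca)$ and satisfies the non-degeneracy of Definition~\ref{defi:param-admiss}. The normalisation $\sum_n p'(n)=1$ is exactly~\eqref{eq:def-cond-prob}, and the non-triviality of $p'=\tilde p_{\theta,\beta}$ supplies the structural conditions of Lemma~\ref{lem:tp-non-degenere}~\ref{item:def-cond-struct-ndg}. When $\beta\neq\zero$, this places $(\theta,\beta)\in\param^{**}(p,\ca)\subset\param^*(p,\ca)$ and the proof is complete. The one genuinely subtle point is the degenerate case $\beta=\zero$, equivalent to $\supp(p')\subset A_0$: here \eqref{eq:def-cond-prob} reduces to $g_{A_0}(\theta)=\theta$, and one must rule out the super-critical root $\theta_M$ from Remark~\ref{rem:theta=0}~\ref{item:b=0}. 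I would do this via Lemma~\ref{lem:TinT0}, exploiting the fact that compatibility of $p'$ transfers the a.s.\ finiteness of $\ct_p$ on the event $\{L_\ca(\ct_p)=\zero\}$ to $\ct_{p'}$ and thus forces $\tilde p_{\theta,\zero}$ to be (sub)critical, so $\theta=\theta_{\min}$, as required.
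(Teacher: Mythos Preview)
Your proof is correct and follows essentially the same route as the paper: both pivot on the star trees $\bt^{(n)}$ (root with $n$ leaf children) and the fact that $0\in A_0$ forces every leaf to land in $A_0$, so that $L_\ca(\bt^{(n)})=\be^{j(n)}$; the paper derives the identical formulas $\theta=p(0)/p'(0)$ and $\beta_j=\P(L_\ca(\ct_{p'})=\be^j)/\P(L_\ca(\ct_p)=\be^j)$ (your $\phi(\be^j)$).

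The one place where the paper is cleaner is the degenerate case $\beta=\zero$. There the paper simply observes that $\supp(p')\subset A_0$ gives $\P(L_\ca(\ct_{p'})=\zero)=1$, whence $\phi(\zero)=1/\P(L_\ca(\ct_p)=\zero)>1$ (the denominator is strictly below $1$ since $A_0\subsetneq\supp(p)$), so $\theta=1/\phi(\zero)\in(0,1)$; as $\theta$ solves $g_{A_0}(\theta)=\theta$ and the only root below $1$ is $\theta_{\min}$, this pins down $\theta=\theta_{\min}$ immediately. Your detour through Lemma~\ref{lem:TinT0} is valid but rests on the assertion that $\ct_p$ is a.s.\ finite on $\{L_\ca(\ct_p)=\zero\}$; this is true (both $\P(L_\ca(\ct_p)=\zero)$ and $\P(L_\ca(\ct_p)=\zero,\ \ct_p\in\T_0)$ are fixed points of $g_{A_0}$ in $(0,1)$, and there is only one), but it is not quite immediate and should be argued rather than assumed.
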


\begin{proof}
  For $k\in \N$, let $\bt_k$ denote the
tree with the root having $k$ children, all of them being leaves (that
is: $k_\roott(\bt_k)=k$ and $\sharp \bt_k=k+1$).

We assume  that $0\in A_0$  and that $p'$  is $(p, \ca)$-compatible. For
simplicity, we write $\ct'$ for  $\ct_{p'}$.  We have $p(0)>0$ and $p'(0)>0$
since    $p$   and    $p'$      satisfy~\eqref{assumption-p}.    Let
$j\in \II{0,  J}$ be such  that $p'(A_j)>0$. There  exists $k_j\in  A_j$ such
that $p'(k_j)>0$. We have:
\[
  \P(L_\ca(\ct')=\be^j)\ge
   \P(\ct'=\bt_{k_j})=p'(k_j)p'(0)^{k_j}>0.
\]
This also implies that $ \P(L_\ca(\ct_{p})=\be^j)>0$ as
$\supp(p')\subset \supp(p)$.
In particular, thanks to  Equation \eqref{equi-cond}, we have for  any
$k\in A_j$:
\[
\P(\ct'=\bt_k\bigm| L_\ca(\ct')=\be^j)  =\P(\ct_{p}=\bt _ k\bigm|
L_\ca(\ct_{p})=\be^j).
\]
This gives:
\[
  \frac{p'(k)p'(0)^k}{\P(L_\ca(\ct')=\be^j)}
  =\frac{p(k)p(0)^k}{\P(L_\ca(\ct_{p})=\be^j)},
\]
that is:
\begin{equation}
   \label{eq:p=qbp}
  p'(k)=\beta_j \theta^k p(k),
\end{equation}
with                      $\theta=p(0)/p'(0)>0$                      and
$\beta_j=\P(L_\ca(\ct')=\be^j)/\P(L_\ca(\ct_{p})=\be^j)>0$.     Notice
that $\theta$ and  $\beta_j$ does not depend on $k\in  A_j$.  For $j=0$,
we  have  $p'(A_0)>0$   as  $p'(0)>0$.  For  $k=0\in   A_0$,  we  deduce
from~\eqref{eq:p=qbp}  that $\beta_0=p'(0)/p(0)=1/\theta$.  For $j\in  \II{0, J}  $
such  that   $p'(A_j)=0$,  Equation~\eqref{eq:p=qbp}  also   holds  with
$\beta_j=0$. Notice that when $\beta=\zero$, we have $\P(L_\ca(\ct')=\zero)=1$
and $\P(L_\ca(\ct_p)=\zero)< 1,$ which entails that
$\beta_0=\P(L_\ca(\ct')=\zero)/\P(L_\ca(\ct_{p})=\zero)>1$, or
equivalently, $\theta\in (0, 1)$.
This proves that  $p'=\tilde p_{\theta, \beta}$, the latter
being       defined        in~\eqref{eq:def-pt} as $\theta\in (0, +\infty )$.

To conclude, notice that Condition~\eqref{eq:def-cond-prob} holds as $p'$
 is a probability distribution and $\beta_0=1/\theta$.
\end{proof}

\begin{lem}
   \label{lem:pab-A1}
   Under the  assumptions of Theorem~\ref{th:p,a-compatibility},  if the
   probability   distribution   $p'$   is  $(p,   \ca)$-compatible   and
   $0\not \in  A_0$, then  we  have $p'=\tilde  p_{\theta,  \beta}$ for
 some   $(p,                    \ca)$-compatible
   parameter $(\theta,\beta)$ .

\end{lem}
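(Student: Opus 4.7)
Since $0\in \supp(p)\setminus A_0$ and the $A_j$'s are pairwise disjoint subsets of $\supp(p)$, there is a unique $j_0\in\II{1,J}$ with $0\in A_{j_0}$. My strategy is to work with the ratio $r(k)=p'(k)/p(k)$ on $\supp(p)$ (with $r(k)=0$ when $p'(k)=0$), and to show that $\log r$ is affine with a common slope on each $A_j$. The compatibility relation~\eqref{equi-cond}, applied to pairs of trees $\bt_1,\bt_2\in\T_0$ with $L_{\ca}(\bt_1)=L_{\ca}(\bt_2)$ and $\P(\ct_p=\bt_i)>0$, rewrites as
\[
\prod_{k\in\N}r(k)^{L_k(\bt_1)-L_k(\bt_2)}=1,
\]
so the plan is to exhibit enough such pairs to pin down the shape of $r$, and then read off the parameters $(\theta,\beta)$.

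The key building block will be the following pair of trees. Given $k,k'\in A_j\cap\supp(p')$ (same $j$) with $k>k'$ and $d:=k-k'\in A_{j_0}\cap\supp(p')$, I take $\bt_A$ to be the tree whose root has out-degree $k$ with $k$ leaf children, and $\bt_B$ to be the tree whose root has out-degree $k'$ with $k'-1$ leaf children and one non-leaf child of out-degree $d$ having $d$ leaf children. A direct count shows that both trees have size $k+1$ and the same value of $L_{\ca}$: in the $A_j$-slot they agree because the only contribution comes from the root (and additionally from the leaves and the middle node when $j=j_0$); and in the $A_{j_0}$-slot they both contribute $k$ (all leaves in $\bt_A$ versus $k-1$ leaves together with one middle node in $\bt_B$). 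Writing out the compatibility identity for this pair yields the fundamental relation
\begin{equation}
\label{eq:plan-fund}
r(k)\,r(0)=r(k')\,r(d).
\end{equation}

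I then extract the parameters as follows. Set $\beta_{j_0}:=r(0)=p'(0)/p(0)>0$, which is positive by the non-triviality of $p$ and $p'$. When $A_{j_0}$ contains some $k^*\geq 1$ with $p'(k^*)>0$, define $\theta>0$ by $r(k^*)=\beta_{j_0}\theta^{k^*}$ and iterate~\eqref{eq:plan-fund} inside $A_{j_0}$ (picking $k_1,k_2,k_1-k_2\in A_{j_0}$) to obtain $r(k)=\beta_{j_0}\theta^k$ for every $k\in A_{j_0}\cap\supp(p')$. For each $j\neq j_0$ with $A_j\cap\supp(p')\neq\emptyset$, I pick a reference element $k_j^*\in A_j\cap\supp(p')$ and apply~\eqref{eq:plan-fund} with $d=k-k_j^*\in A_{j_0}$ to obtain $r(k)=\beta_j\theta^k$ on $A_j\cap\supp(p')$, where $\beta_j:=r(k_j^*)/\theta^{k_j^*}$; I set $\beta_j=0$ whenever $A_j\cap\supp(p')=\emptyset$. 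This identifies $p'=\tilde p_{\theta,\beta}$, after which membership $(\theta,\beta)\in\param^*(p,\ca)$ and the compatibility of that parameter follow from Lemma~\ref{lem:tp-non-degenere} together with the fact that $p'$ satisfies~\eqref{assumption-p}.

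The main obstacle is the degenerate situations in which~\eqref{eq:plan-fund} cannot be iterated to reach all of $\supp(p')$: for instance when $A_{j_0}\subset\{0\}$ (so the required $d\geq 1$ in $A_{j_0}$ is unavailable), when $\supp(p')\cap A_j$ reduces to a single value for some $j$, or when the needed differences $k-k_j^*$ fail to lie in $A_{j_0}$. To handle these I will replace the pair $(\bt_A,\bt_B)$ by more elaborate ones (using longer chains of intermediate nodes whose out-degrees lie in $A_{j_0}$, or cross-referencing different $A_j$'s through auxiliary trees) and show that, in the extreme cases, the resulting relations force $p'$ into the degenerate forms~\eqref{eq:def-pt0} or~\eqref{eq:def-pt-infty} with $\theta\in\{0,+\infty\}$ rather than the non-degenerate form~\eqref{eq:def-pt}.
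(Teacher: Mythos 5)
There is a genuine gap, and it sits exactly where you deferred the work. Your fundamental relation $r(k)\,r(0)=r(k')\,r(d)$ is only available when $d=k-k'$ happens to lie in $A_{j_0}\cap\supp(p)$ (it must lie in $A_{j_0}$, not merely in $\supp(p)$, for the two trees to have the same $L_\ca$, since a node of out-degree $d\in A_0$ would change the count in the $j_0$-slot). For general pairwise disjoint subsets of $\supp(p)$ this is an arithmetic accident: e.g.\ $\supp(p)=\{0,2,5\}$, $A_1=\{0\}$, $A_2=\{2,5\}$ gives $d=3\notin\supp(p)$, and your chain argument inside $A_{j_0}$ ("pick $k_1,k_2,k_1-k_2\in A_{j_0}$") likewise presupposes an additive structure that $A_{j_0}$ need not have. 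So the single-swap comparison cannot, by itself, propagate a common $\theta$ across $\supp(p')$, and the "more elaborate pairs" you invoke in the last paragraph are precisely the content of the proof, not a routine patch. The paper avoids the difference condition entirely: in Lemma~\ref{lem:technique} it compares trees containing \emph{several} vertices of the relevant out-degrees, with multiplicities $\alpha=\ell-1$, $\beta=k'-k$ (or $\alpha=\ell-k_i$, $\beta=k_j-k$), chosen so that the leaf counts balance through~\eqref{eq:kt=L0-1}; this works for arbitrary $k,k',\ell$ and yields the power relations~\eqref{eq:pp-j0} and~\eqref{eq:pp-jj}, from which the common exponential form is extracted.

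Two further pieces are missing. First, you never pin down $p'$ on $A_0$: when $0\notin A_0$ the set $A_0$ can still contain elements $\ell\geq 2$, and one must prove $p'(\ell)=\theta^{\ell-1}p(\ell)$ there; this needs separate comparisons (Points~\ref{item:tech-00} and~\ref{item:tech-j0} of Lemma~\ref{lem:technique}), not covered by your relation, which only links $A_j$ to $A_{j_0}$. Second, the degenerate outcomes $\theta\in\{0,+\infty\}$ are not "forced by the same relations" in any way you have shown: in the paper they arise from a trichotomy (Lemma~\ref{lem:technique}~\ref{item:tech-jj} implies that, simultaneously for all $j$ with $\Card(A_j\cap\supp(p'))$ relevant, either only $\min A_j$ is charged, or only $\max A_j$, or all of $A_j$), and each branch is then matched with~\eqref{eq:def-pt0},~\eqref{eq:def-pt-infty} or~\eqref{eq:def-pt}, together with the structural constraints ($A_0\subset\{1\}$ when $\theta=+\infty$, etc.). Your sketch contains no mechanism producing this trichotomy or these constraints, so as written the argument establishes the conclusion only under strong unstated hypotheses on the sets $\ca$.
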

The proof of this  lemma is more technical and relies on the following
result whose proof is postponed at the end of this section.
We introduce the sets:
\begin{align*}
\cj^* & =\{j\in\lb 1,J\rb\,\colon\, p'(A_j)>0\},\\
  \cj^ {**} & =\{j\in \cj^*\, \colon\, \Card(A_j)\geq 2\}.
\end{align*}
Notice that once Lemma~\ref{lem:pab-A1} is proved, then $\cj^*$ coincides with $\cj^*_\beta$ defined in~\eqref{eq:def-cj}. 

\begin{lem}
  \label{lem:technique}
 Assume $0\not\in A_0$ and let  $p'$  be  a  $(p, \ca)$-compatible  distribution.   We  have  the
  following properties.

  \begin{enumerate}[(i)]

  \item \label{item:tech-00}
The map $\ell \mapsto \left(p'(\ell)/p(\ell)\right)^{1/(\ell -1)}$ is
constant over $\{\ell\in A_0\, \colon\,  \ell \geq 2\}$.

\item \label{item:tech-j0}
  Assume there exists $\ell\in A_0$ such that $\ell\geq 2$   and $k', k\in A_j$ with
     $j\in\cj^{**}$ such that  $k'>k\geq 0$.
     Then we have:
     \[
  p'(k')>0
       \quad\Longleftrightarrow\quad
     p'(k)>0 \quad\text{and}\quad
       p'(\ell)>0.
\]
Furthermore if those conditions hold, then we also have, with $\alpha=
\ell-1$ and $\beta=k'-k$:
\begin{equation}
  \label{eq:pp-j0}
  \left(\frac{p'(k)}{p'(k')}\right)^\alpha \, p'(\ell) ^\beta
  =   \left(\frac{p(k)}{p(k')}\right)^\alpha \, p(\ell) ^\beta.
\end{equation}

\item \label{item:tech-jj}
  Assume there exist $i, j\in \cj^{**}$,
  $\ell , k_i\in A_i$ such that $\ell > k_i\geq 0$ and $p'(k_i)>0$, and
 $k_j, k \in A_j$
  such that  $k_j>k\geq 0$  and
  $p'(k_j)>0$. Then we have $p'(\ell)>0$ and $p'(k)>0$ as well as, with
  $\alpha=\ell - k_i$ and $\beta= k_j -k$:
\begin{equation}
  \label{eq:pp-jj}
  p'(k)^\alpha p'(\ell)^\beta =
  \left(\frac{p'(k_j)}{p(k_j)}\right)^\alpha
  \left(\frac{p'(k_i)}{p(k_i)}\right)^\beta \, p(k)^\alpha
  p(\ell)^\beta.
\end{equation}
\end{enumerate}
\end{lem}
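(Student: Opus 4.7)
The plan relies on a direct consequence of Definition~\ref{defi:equiv} and~\eqref{eq:loi-Tp}: for any two finite trees $\bt_1,\bt_2$ with $L_\ca(\bt_1)=L_\ca(\bt_2)$ and $\P(\ct_p=\bt_1)\P(\ct_p=\bt_2)>0$, one has
\[
\frac{\P(\ct_{p'}=\bt_1)}{\P(\ct_p=\bt_1)}=\frac{\P(\ct_{p'}=\bt_2)}{\P(\ct_p=\bt_2)},
\]
a quantity that depends only on $L_\ca(\bt_i)$; equivalently, $\prod_{n\in\N}(p'(n)/p(n))^{L_n(\bt_1)-L_n(\bt_2)}=1$ when the product is well defined, and positivity of $\P(\ct_{p'}=\bt_2)$ forces positivity of $\P(\ct_{p'}=\bt_1)$. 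The idea is, for each item, to construct an ad hoc pair of trees whose $L_n$-profiles differ only on the indices listed in the claim, and whose $L_\ca$-values coincide by design.

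The building block is the grafting operation $\Phi_m$ for $m\in\supp(p)$: replace a chosen leaf of a tree $\bt$ by an internal vertex of out-degree $m$ with $m$ new leaf children. This changes $L_0$ by $m-1$, $L_m$ by $\ind_{\{m\ne 0\}}$, and multiplies the $\ct_p$-probability by $p(m)p(0)^{m-1}$ (analogously for $p'$). I pick a base tree $\bt_0$ with arbitrarily many leaves and $\P(\ct_p=\bt_0)\P(\ct_{p'}=\bt_0)>0$, obtained by iterating $\Phi_{m_0}$ on the single-vertex tree for some $m_0\in\supp(p')\cap\{2,3,\dots\}$ (which exists by~\eqref{assumption-p} for $p'$ and $\supp(p')\subset\supp(p)$). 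Since $0\notin A_0$, there is a unique $j_0\in\II{1,J}$ with $0\in A_{j_0}$, so every leaf contributes to $L_{A_{j_0}}$.

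For~\ref{item:tech-00}, I compare $\bt_1=\Phi_\ell^{\ell'-1}(\bt_0)$ and $\bt_2=\Phi_{\ell'}^{\ell-1}(\bt_0)$: since $\ell,\ell'\in A_0$, the only affected coordinate is $L_{A_{j_0}}$, increased by $(\ell-1)(\ell'-1)$ in both cases. Writing the ratio identity and cancelling the common $p'(0)/p(0)$ exponent yields $(p'(\ell)/p(\ell))^{\ell'-1}=(p'(\ell')/p(\ell'))^{\ell-1}$, i.e., the claimed constancy (if one of $p'(\ell),p'(\ell')$ vanishes, positivity propagation forces both to). For~\ref{item:tech-j0}, I take $\bt_1=\Phi_{k'}^{\ell-1}(\bt_0)$ and $\bt_2=\Phi_\ell^{k'-k}\circ\Phi_k^{\ell-1}(\bt_0)$, with the convention $\Phi_0=\mathrm{id}$ (used only when $k=0$, which by disjointness of the $A_j$ forces $j=j_0$). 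A short count splitting on whether $j=j_0$ confirms $L_\ca(\bt_1)=L_\ca(\bt_2)$. Then $p'(k')>0$ gives $\P(\ct_{p'}=\bt_1)>0$, propagating to $\P(\ct_{p'}=\bt_2)>0$ and forcing $p'(k)>0$ (when $k\ge 1$) and $p'(\ell)>0$; the converse is symmetric. After the $p'(0)/p(0)$ exponents cancel, the ratio identity collapses to~\eqref{eq:pp-j0}. For~\ref{item:tech-jj}, I set $\bt_1=\Phi_\ell^{k_j-k}\circ\Phi_k^{\ell-k_i}(\bt_0)$ and $\bt_2=\Phi_{k_i}^{k_j-k}\circ\Phi_{k_j}^{\ell-k_i}(\bt_0)$. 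Direct counting gives matching $L_{A_i}$- and $L_{A_j}$-increments, and the leaf-count shifts agree via the algebraic identity $(k_j-k)(\ell-1)+(\ell-k_i)(k-1)=(k_j-k)(k_i-1)+(\ell-k_i)(k_j-1)$. Since $p'(k_i),p'(k_j),p'(0)>0$, we get $\P(\ct_{p'}=\bt_2)>0$, which propagates to $\bt_1$ and yields $p'(\ell),p'(k)>0$; the ratio identity then gives~\eqref{eq:pp-jj}.

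The main obstacle is the $L_{A_{j_0}}$-bookkeeping when one of the degrees happens to lie in $A_{j_0}$ — in particular when $k=0$ in~\ref{item:tech-j0} or one of $k,k_i$ equals $0$ in~\ref{item:tech-jj}. In each such subcase the corresponding $\Phi$-operation contributes an extra unit to $L_{A_{j_0}}$, but the symmetric choice of exponents absorbs that contribution so that the counts still match. The cancellation of the $p'(0)/p(0)$-exponents in the ratio equations is a consequence of~\eqref{eq:kt=L0-1} together with the fact that $\bt_1$ and $\bt_2$ share the same base tree $\bt_0$; all remaining steps are purely algebraic.
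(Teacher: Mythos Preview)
Your proposal is correct and follows essentially the same strategy as the paper: build two trees with identical $L_\ca$-vectors whose $L_n$-profiles differ only on the targeted degrees, and read off the claimed identities from the compatibility relation. The only cosmetic difference is that the paper constructs its comparison trees from scratch (only leaves plus the prescribed number of vertices of each specified out-degree), whereas you graft onto a base tree $\bt_0$ chosen to have positive $p'$-probability; after cancellation of the common $\bt_0$-contribution the two computations are identical.

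Two small points of phrasing worth tightening. First, your opening ratio identity requires $\P(L_\ca(\ct_{p'})=\bn)>0$, not merely $\P(\ct_p=\bt_i)>0$; you do secure this later by arranging that one of $\bt_1,\bt_2$ has positive $p'$-probability, but the blanket statement at the start overshoots. Second, the sentence ``the $p'(0)/p(0)$-exponents cancel'' is literally true only when $k\geq 1$ and $k_i\geq 1$; in the boundary cases $k=0$ or $k_i=0$ the leaf-count difference is \emph{not} zero (e.g.\ $L_0(\bt_1)-L_0(\bt_2)=\ell-k_i$ when $k=0$ in~\ref{item:tech-jj}), and the residual $(p'(0)/p(0))$-power is precisely the $p'(k)^\alpha/p(k)^\alpha$ term in~\eqref{eq:pp-jj}. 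Your subcase remark hints at this, but it is the $L_0$-count rather than the $L_{A_{j_0}}$-count that fails to match, so the bookkeeping deserves one explicit line.
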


\begin{proof}[Proof of Lemma~\ref{lem:pab-A1}]
  We assume  that $0\not \in A_0$, that is without loss  of generality,
  $0\in  A_1$,   and that $p'$  is $(p, \ca)$-compatible.
Since $p$ and $p'$ satisfy~\eqref{assumption-p}, we get $p(0)>0$ and
$p'(0)>0$. So, we can define $\beta_1=p'(0)/p(0)>0$.  By considering trees
with vertices having zero or  one child, we get:
\begin{equation}
   \label{eq:pp'1}
  1\in A_0
 \, \Longrightarrow \, p'(1)=p(1).
\end{equation}
Recall the set $\cj^*=\{j\in \II{1,
  J}\, \colon\, p'(A_j)>0\}$. We set $\beta_j=0$ for $j\in \II{1, J}
\setminus \cj^*$ in accordance with the definition of the $\tilde
p_{\theta, \beta}$. Notice that $\cj^*$ is non empty as $1\in \cj^*$.
For $j\in \cj^*$, let $k_j$ be an element of $A_j$ such that
$p'(k_j)>0$.
For $j\in \cj^*\setminus \cj^{**}$,  we have $A_j=\{k_j\}$ and, whatever
the  value  of   $\theta$  which  will  be  defined  later   on,  we  set
$\beta_j=  \theta^{-k_j} p'(k_j)/  p(k_j)$ if $\theta\in (0, +\infty )$
and $\beta_j=  p'(k_j)/  p(k_j)$ otherwise; this is
also  in accordance  with the
definition of the $\tilde p_{\theta, \beta}$. We now have to consider
the value of $p'(\ell)$ for $\ell\in A_j$ and $j\in \{0\} \cup
\cj^{**}$.

\medskip We first consider the case $\cj^{**}=\emptyset$.  In particular
we    get    that   $A_1=\{0\}$.     Then    we    have   either    that
$p'(A_0)=p'(A_0\cap\{1\})$ or there exists  $k_0 \in A_0$
such that  $k_0\geq 2$ and $p'(k_0)>0$.  In  the former case,  using~\eqref{eq:pp'1} and
$\beta_0=p(1)\, \ind_{\{1\in  A_0\}}$, the probability  distribution can
be  written as  a $\tilde  p_{\theta, \beta}$  with $\theta=0$.   In the
latter    case,    using    Lemma~\ref{lem:technique}~\ref{item:tech-00}
and~\eqref{eq:pp'1},  we  deduce  that~\eqref{eq:def-pt} holds  for  all
$\ell\in   A_0$  with   a   common  $\theta\in   (0,   +\infty  )$ (given by the constant value of the map in   Lemma~\ref{lem:technique}~\ref{item:tech-00}) and
$\beta_0=1/\theta$;  thus  the  probability  distribution  $p'$  can  be
written as a $\tilde p_{\theta, \beta}$ with $\theta\in (0, +\infty )$.

\medskip We now assume that $\cj^{**}\neq \emptyset$. From
Lemma~\ref{lem:technique}~\ref{item:tech-jj}, only three
cases are possible:
\begin{enumerate}[(a)]
\item\label{item:min}
  For all $j\in \cj^{**}$, we have
     $k_j=\min A_j$, $p'(k_j)>0$ and $p'(A_j\setminus \{k_j\})=0$.
   \item\label{item:max}
     For all  $j\in \cj^{**}$, we have   $k_j=\max A_j$, $p'(k_j)>0$ and $p'(A_j\setminus \{k_j\})=0$.
   \item \label{item:all}
     For all  $j\in \cj^{**}$, we have  $p'(k)>0$ for all $k\in A_j$.

\end{enumerate}

We shall  investigate each  case separately. In  case~\ref{item:min}, we
deduce     from    Lemma~\ref{lem:technique}~\ref{item:tech-j0}     that
$p'(\ell)=0$   for   all  $\ell\in A_0$ with $\ell\geq   2$.    Thus,
using~\eqref{eq:pp'1}, we get that the probability distribution  $p'$ can be written as a
$\tilde p_{\theta, \beta}$  with $\theta=0$.

\medskip

In  case~\ref{item:max}, since $p'(0)>0$, we deduce that $A_1=\{0\}$ and
that if $j$ belongs to $\cj^{**}$ (and thus to $\cj^*$) then $\sup A_j$
is finite. Then use   Lemma~\ref{lem:technique}~\ref{item:tech-j0}  to
deduce that there is no element $\ell\geq 2$ in $A_0$, that is
$A_0\subset \{1\}$.  Thus,
using~\eqref{eq:pp'1}, we get that the probability distribution  $p'$  can be written as a
$\tilde p_{\theta, \beta}$  with $\theta=+\infty $.

\medskip

In case~\ref{item:all}, to fix ideas, let $i=\min \cj^{**}$ and consider
$k_i=\min A_i$ and $\ell=\min A_i \setminus \{k_i\}$. This uniquely
determine $\beta_i $ and $\theta\in (0, +\infty )$ solution of, for
$k'\in \{k_i, \ell\}$:
\begin{equation}
  \label{eq:pk=bqpk}
p'(k')=
\beta_i \theta^{k'} p(k').
 \end{equation}
  Then for $k''\in A_i$ larger than $\ell$, use
Lemma~\ref{lem:technique}~\ref{item:tech-jj}, and in
particular~\eqref{eq:pp-jj},  with $j=i$, $k_j=k''$ and $k=\ell$
to deduce that $p'(k'')$ can also be written as in~\eqref{eq:pk=bqpk}.
For  $j\in \cj^{**}$ with $j\neq i$, set $k=\min A_j$ and define
$\beta_j$ by:
\begin{equation}
  \label{eq:pk=bqpk2}
p'(k)=
\beta_j \theta^{k} p(k).
 \end{equation}
 Then    use   Lemma~\ref{lem:technique}~\ref{item:tech-jj},    and   in
 particular~\eqref{eq:pp-jj}, with $k_j>k$ (and $k_j\in A_j$), to deduce
 that~\eqref{eq:pk=bqpk2}  holds for  $k$  replaced by  $k_j$. Then  for
 $\ell\geq 2$ in $A_0$, use Lemma~\ref{lem:technique}~\ref{item:tech-j0}
 to  get  that  $p'(\ell)=  \theta^{\ell -1}  p(\ell)$  (which  is  also
 consistent  with~\eqref{eq:pp'1}).   We  deduce  that  the  probability
 distribution $p'$ can  be written as a $\tilde  p_{\theta, \beta}$ with
 $\theta\in (0,+\infty)$.  This concludes the proof.
\end{proof}

\begin{proof}[Proof of Lemma~\ref{lem:technique}]
Without loss  of generality, we assume $0\in  A_1$.
  In the following description of trees we don't precise the number of
  leaves, as it is determined through
  Equation~\eqref{eq:kt=L0-1}. The argument is based on considering two
  well chosen trees $\bt$ and $\bt'$ such that
  $L_\ca(\bt)=L_\ca(\bt')$. Recall that we write $L_k$ for $L_{\{k\}}$
  and $k\in \N$.
  \medskip

  We prove Point~\ref{item:tech-00}.
 Assume there exist $k,k'\in A_0$ such that $\min(k, k')\geq 2$. Consider a tree $\bt$
 having only leaves and $\alpha=k'-1>0$ vertices
   of out-degree $k$ (that is $L_{k}(\bt)=\alpha$) and a tree $\bt'$
   having only leaves and $\beta=k-1>0$  vertices of out-degree $k'$ (that is
   $L_{k'}(\bt')=\beta$).
Set $a=1+(k'-1)(k-1)> 1$. Thanks to~\eqref{eq:kt=L0-1}, we get $
L_{0}(\bt)=  L_{0}(\bt')=a$,
and thus:
\[
  L_\ca(\bt)=L_\ca(\bt')= \bn
  \quad\text{with}\quad \bn=a\, \be^1.
\]

Assume that $p'(k)> 0$. We have $\P(L_\ca(\ct_{p'})=\bn)\ge
\P(\ct_{p'}=\bt)=p'(k)^\alpha p'(0)^a>0$, and thus
$\P(L_\ca(\ct_{p})=\bn)>0$.
We set:
\[
c=\frac{\P(L_{\ca}(\ct_{p'})=\bn)}{\P(L_{\ca}(\ct_{p})=\bn)}>0.
\]
Then, we apply ~\eqref{equi-cond} to the trees $\bt$ and $\bt'$ to get:
\[
    p'(k)^\alpha\,  p'(0)^{a} =c p(k)^\alpha\,  p(0)^{a}
  \quad\text{and}\quad
   p'(k')^\beta\,  p'(0)^{a} =c p(k')^\beta\,  p(0)^{a}.
\]
This readily implies  that $p'(k')$ is positive (as  $k'\in A_0$ implies
that $p(k')>0$) and that
$\left(p'(k)/p(k)\right)^{1/(k -1)}=\left(p'(k')/p(k')\right)^{1/(k' -1)}$.
This gives Point~\ref{item:tech-00}.
\medskip

 We now  prove Point~\ref{item:tech-j0}.
  Assume there exist $k'>k\geq 0$ which are elements of $A_j$ with
     $j\in\cj^{**}$ and $\ell\geq 2$ which is an element of $A_0$.
 Notice  that $j$ can
  possibly  take the  value $1$  when $1\in  \cj^{**}$ (that is, $\Card(A_1)\geq 2$). Consider a tree $\bt$ having
    only leaves and $\alpha=\ell-1>0$ vertices of
     out-degree  $k'$  (that  is $L_{k'}(\bt)=\alpha$)  and  a  tree
     $\bt'$ having  only leaves,
     $\beta=k'-k>0$ vertices of out-degree $\ell$ and, if $k>0$, $\alpha$ vertices
     of out-degree  $k$ (that is $L_{\ell}(\bt')=\beta$ and, if $k>0$,
     $L_{k}(\bt')=
     \alpha$).

     \medskip
We first assume that $k\geq 1$  (which is automatically satisfied if $j\geq 2$).
Thanks to~\eqref{eq:kt=L0-1}, we get with $a=1+\alpha(k' -1)=1+
\alpha(k-1)+ \beta(\ell -1)$ that $  L_{0}(\bt)=  L_{0}(\bt')=a$,
and thus:
\begin{equation}
  \label{eq:La=La-j0}
L_\ca(\bt)=L_\ca(\bt')= \bn\quad\text{with}\quad \bn=a\, \be^1 + \alpha \be^j.
\end{equation}
Assume that $p'(k')>0$.
 We have $\P(L_\ca(\ct_{p'})=\bn)\ge
\P(\ct_{p'}=\bt)=p'(k')^\alpha p'(0)^a>0$, and thus
$\P(L_\ca(\ct_{p})=\bn)>0$.
We set:
\[
c=\frac{\P(L_{\ca}(\ct_{p'})=\bn)}{\P(L_{\ca}(\ct_{p})=\bn)}>0.
\]
Then, we apply ~\eqref{equi-cond} to the trees $\bt$ and $\bt'$ to get:
\[
  p'(k')^\alpha\,  p'(0)^{a} =c p(k')^\alpha\,  p(0)^{a}
  \quad\text{and}\quad
   p'(k)^\alpha\, p'(\ell)^\beta\,  p'(0)^{a} =c p(k)^\alpha\, p(\ell)^\beta\, p(0)^{a}.
\]
This readily implies  that $p'(k)$ and $p'(\ell)$ are  positive (as
$k\in A_j$ and $\ell\in A_0$ imply
that $p(k)$ and $p(\ell)$ are positive).
Similarly, assuming that $p'(k)$ and $p'(\ell)$ are  positive implies
that $p'(k')$ is positive. Notice also that~\eqref{eq:pp-j0} is
obvious.

\medskip  We now  consider  the  case $k=0$  and  thus  $j=1$ (as $0\in A_1$).  One  has
$L_{A_1}(\bt)=L_{0}(\bt)  + L_{k'}(\bt)=1+  \alpha(k'-1)+ \alpha=  1+
(\ell-1)                             k'$,                            and
$  L_{A_1}(\bt')=L_{0}(\bt')=1+\beta  (\ell  -1)= 1+  (\ell-1)  k'$,
which implies that~\eqref{eq:La=La-j0} still holds.
We then conclude similarly as in the case $k>0$.
This gives Point~\ref{item:tech-j0}.
\medskip

Eventually,  we  prove   Point~\ref{item:tech-jj}.  Assume  there  exist
$i, j\in \cj^{**}$, $k_j>k\geq 0$ which are elements  of $A_j$, $\ell > k_i\geq 0$
which are elements of $A_i$, with $p'(k_j)>0$ and $p'(k_i)>0$. Notice that $i$ and
$j$  can be  possibly equal  and can  possibly take  the value  $1$ when
$1\in   \cj^{**}$.   Consider  a   tree   $\bt$   having  only   leaves,
$\alpha=\ell-k_i>0$  vertices   of  out-degree   $k_j$  and,   if  $k_i>0$,
$\beta=k_j-k>0$     vertices    of     out-degree     $k_i$    (that     is
$L_{k_j}(\bt)=\alpha$ and, if $k_i>0$, $L_{k_i}(\bt)=\beta$) and
a tree  $\bt'$ having only  leaves, $\beta$ vertices of  out-degree $\ell$,
and,   if   $k>0$,  $\alpha$   vertices   of   out-degree  $k$   (that   is
$L_{\ell}(\bt')=\beta$ and, if $k>0$, $L_{k}(\bt')=\alpha$).

  \medskip

  We assume  that $k\geq 1$ and  $k_i\geq 1$, and leave  the cases $k=0$
  (and thus $j=1$) and/or $k_i=0$ (and  thus $i=1$) to the reader as the
  proof can be handled very similarly; see  also the end of the proof of
  Point~\ref{item:tech-j0}.
Thanks to~\eqref{eq:kt=L0-1}, we get with $a=1+\alpha(k_j -1)+ \beta (k_i-1)=1+
\alpha(k-1)+ \beta(\ell -1)$ that $  L_{0}(\bt)=
L_{0}(\bt')=a$
and thus:
\[
  L_\ca(\bt)=L_\ca(\bt')= \bn
  \quad\text{with}\quad \bn=a\, \be^1 + \alpha \be^j+ \beta \be^i.
\]
 We have $\P(L_\ca(\ct_{p'})=\bn)\ge
\P(\ct_{p'}=\bt)=p'(k_j)^\alpha \, p'(k_i)^\beta\, p'(0)^a>0$, and thus
$\P(L_\ca(\ct_{p})=\bn)>0$.
We set:
\[
c=\frac{\P(L_{\ca}(\ct_{p'})=\bn)}{\P(L_{\ca}(\ct_{p})=\bn)}>0.
\]
Then, we apply ~\eqref{equi-cond} to the trees $\bt$ and $\bt'$ to get:
\[
  p'(k_j)^\alpha\, p'(k_i)^\beta\,   p'(0)^{a} =c   p(k_j)^\alpha\,
  p(k_i)^\beta\,  p(0)^{a}
  \quad\text{and}\quad
   p'(k)^\alpha\, p'(\ell)^\beta\,  p'(0)^{a} =c p(k)^\alpha\, p(\ell)^\beta\, p(0)^{a}.
\]
This readily implies  that $p'(k)$ and $p'(\ell)$ are  positive (as
$k\in A_j$ and $\ell\in A_i$ imply
that $p(k)$ and $p(\ell)$ are positive).
Equation~\eqref{eq:pp-jj} is then obvious. This gives Point~\ref{item:tech-jj}.
\end{proof}

\section{Existence of a critical $(p, \ca)$-compatible distribution}
\label{sec:critical-p}
\subsection{Parametrization  of  the $(p,  \ca)$-compatible  probability
  distributions using their direction}

 We  define the direction of a
probability distribution $p'$ with respect to $\ca$.

\begin{defi}[Direction of compatible probability distributions]
  \label{defi:direction}
 The
  direction $\alpha\in \Delta^J$ of  a
  $(p,  \ca)$-compatible  probability distribution $p'$ such that
  $p'(A_0)<1$ is defined by:
\begin{equation}
   \label{eq:direction}
      \alpha= a^{-1} \, p'(\ca)
  \quad\text{with}\quad
  a=1-p'(A_0)>0.
  \end{equation}
\end{defi}
Because by definition  $p'$ is non trivial, see~\eqref{assumption-p},
we shall see below that the set of possible directions is:
\begin{equation}
   \label{eq:possible-dir}
  \Delta_J^*=\Delta_J\backslash \Delta_J^o,
\end{equation}
where the set of ineligible directions are given by:
\[
  \Delta_J^o=\bigcup_{j=1}^J \Big\{\alpha\in\Delta_J: \alpha_j=0
  \,\text{ if }\,
   0\in A_j
   \quad\text{or}\quad
   \alpha_j=1  \,\text{ if }\, A_0\cup A_j\subset \{0,1\}\Big\}.
\]

We shall  use a parametrization  of the probability distribution
$\tilde  p_{\theta,\beta}$ using the parameter $\theta$ and its
direction $\alpha\in \Delta_J^*$. Recall that
$\tilde p_{\theta,\beta}(A_0)=1$  if and only if  $\beta=\zero$,
see Remark~\ref{rem:theta=0}~\ref{item:b=0}. Recall the set of
parameters $\param^{**}(p,
\ca)$ defined in~\eqref{eq:def-Pa**}, where the cases $\beta=\zero$ are
removed,  and the set of $(p,
\ca)$-compatible  parameters given in
Definition~\ref{defi:param-admiss}.
The direction map $\cd: (\theta, \beta)\mapsto (\theta, \alpha)$ given
by~\eqref{eq:direction} is defined on the subset of $\param^{**}(p,
\ca)$ of
$(p,
\ca)$-compatible  parameters;
and it is clearly injective.

\begin{defi}[Compatible parameters]
  \label{defi:direc-compa}
  The  parameter  $(\theta,  \alpha)=\cd(\theta, \beta)$  is
$(p, \ca)$-compatible  if $(\theta, \beta)$ is $(p, \ca)$-compatible  (and
$\beta\neq   \zero$).
\end{defi}
We   shall  write   $p_{\theta,  \alpha}$   for
$\tilde p_{\theta,  \beta}$ when $(\theta,\alpha)=\cd(\theta,\beta)$, and it satisfies~\eqref{assumption-p} when $(\theta, \beta)$ is $(p, \ca)$-compatible.
Notice that for  $\alpha\in \Delta_J^*$ the
set:
\[
\cj^*_\beta=\{j\in \II{1, J}\, \colon \, \beta_j > 0\}
=\{j\in \II{1, J}\, \colon \, \alpha_j > 0\}
\]
is  by   definition  non  empty,  and   we  shall  also  denote   it  by
$\cj^*_\alpha$  or simply  $\cj^*$ when  there  is no  ambiguity on  the
parameter. For the
convenience of the reader, we give explicit formulas for the probability
distributions $p_{\theta, \alpha}$, using~\eqref{eq:direction} and
Lemma~\ref{lem:tp-non-degenere}.  To simplify the expression we set:
\begin{equation}
  \label{eq:def-q1}
q_1=1- p(1) \ind_{\{1\in A_0\}}>0.
\end{equation}

\begin{lem}[The probability distributions $p_{\theta, \alpha}$]
   \label{lem:p-qa}
   Let $\alpha\in \Delta_J^*$ and $\theta\in [0, +\infty ]$ be such that
   the  parameter  $(\theta,  \alpha)$  is  $(p,
   \ca)$-compatible.  The
   non-zero terms  of the probability ditribution  $p_{\theta, \alpha} $
   are            given           as            follows. (Recall that  $0\in A_\cj$ and $
      p\left(A_\cj\cap \{0, 1\}^c\right)>0$.)
\begin{enumerate}[(i)]
\item\label{item:p-alpha-q}
If
$\theta\in   (0,   +\infty   )$, then we have
$g_{A_0}(\theta)<\theta$, $g_{A_j}(\theta)<+\infty $ for $j\in \cj^*$ and:

\begin{align*}
p_{\theta, \alpha}(k) & =\theta^{k-1} \, p(k)\quad  \text{for $k\in A_0$},\\
  p_{\theta, \alpha}(k) & =\alpha_j \frac{\theta -
    g_{A_0}(\theta)}{g_{A_j}(\theta)} \, \theta^{k-1} \, p(k)
  \quad\text{for $k\in A_j$ and $j\in \cj^*$}.
\end{align*}

\item If $\theta=0$, then we have $0\not \in A_0$, $
  \max_{j\in\cj^*}\min (A_j)>1$,
 $p_{\theta, \alpha}(1) =p(1)$ if $1\in A_0$ and:
\[
  p_{\theta, \alpha}(k)=\alpha_j \, q_1
  \quad\text{for $k=\min  A_j$ and $j\in \cj^*$}.
\]

\item If $\theta=+\infty $, then we have  $A_0\subset \{1\}$,  $A_{j_0}= \{0\}$ for some $j_0\in
      \cj^*$, $\cj^*\subset \cj_\infty $,  $  \max_{j\in\cj^*}\max (A_j)>1$,
 $p_{\theta, \alpha}(1) =p(1)$ if $1\in A_0$ and:
\[
  p_{\theta, \alpha}(k)=\alpha_j \, q_1
  \quad\text{for $k=\max  A_j$ and $j\in \cj^*$}.
\]
\end{enumerate}
\end{lem}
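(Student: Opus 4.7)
The proof is a direct translation exercise: one must invert the relation $\alpha = a^{-1}p'(\ca)$ to express $\beta_j$ in terms of $\alpha_j$ and $\theta$, and then plug back into the defining formulas~\eqref{eq:def-pt}, \eqref{eq:def-pt0}, \eqref{eq:def-pt-infty} of $\tilde p_{\theta,\beta}$. The plan is to handle the three cases of $\theta$ separately, in each case (a) computing $p'(A_0)$ and $p'(A_j)$ for $j\in \cj^*$ from the defining formula, (b) solving for $\beta_j$ using the direction equation, (c) substituting, and (d) checking the side conditions stated in the lemma. The support hypotheses on $A_0$ and the $A_j$'s are inherited directly from Lemma~\ref{lem:tp-non-degenere} (since $(\theta,\alpha)$ compatible means $(\theta,\beta)$ is $(p,\ca)$-compatible).

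For the non-degenerate case $\theta\in(0,+\infty)$, formula~\eqref{eq:def-pt} together with $\beta_0=\theta^{-1}$ gives $p'(A_0)=\theta^{-1}g_{A_0}(\theta)$ and $p'(A_j)=\beta_j g_{A_j}(\theta)$ for $j\in\cj^*$. So $a=1-p'(A_0)=\theta^{-1}(\theta-g_{A_0}(\theta))$, and since $\alpha\in \Delta_J^*$ forces $\beta\neq\zero$, we have $a>0$, which yields $g_{A_0}(\theta)<\theta$. Finiteness $g_{A_j}(\theta)<+\infty$ follows from $\beta_j g_{A_j}(\theta)\leq 1$ with $\beta_j>0$ for $j\in\cj^*$. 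Solving $\alpha_j=p'(A_j)/a$ gives
\[
\beta_j=\alpha_j\,\frac{\theta-g_{A_0}(\theta)}{\theta\, g_{A_j}(\theta)},
\]
and substituting in~\eqref{eq:def-pt} delivers the announced formulas for $k\in A_0$ and $k\in A_j$.

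For $\theta=0$, the support condition $0\not\in A_0$ from Lemma~\ref{lem:tp-non-degenere}~\ref{item:def-cond-struct-0-ndg} holds, and \eqref{eq:def-pt0} with $\beta_0=p(1)\ind_{\{1\in A_0\}}$ gives $p'(A_0)=p(1)\ind_{\{1\in A_0\}}$, so $a=q_1$ by~\eqref{eq:def-q1}. Since $p'(A_j)=\beta_j$ for $j\in\cj^*$, we get $\beta_j=\alpha_j q_1$, and substitution yields $p_{0,\alpha}(\min A_j)=\alpha_j q_1$, together with $p_{0,\alpha}(1)=p(1)$ when $1\in A_0$. The case $\theta=+\infty$ is entirely parallel, using~\eqref{eq:def-pt-infty} and replacing $\min A_j$ by $\max A_j$; the structural conditions $A_0\subset\{1\}$, $\cj^*\subset\cj_\infty$, and the existence of $j_0\in\cj^*$ with $A_{j_0}=\{0\}$ all come from Lemma~\ref{lem:tp-non-degenere}~\ref{item:def-cond-struct-infini-ndg}.

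The main obstacle is not conceptual but notational: one must correctly identify what "$p'(A_0)$" is in the degenerate cases (since the formulas~\eqref{eq:def-pt0}--\eqref{eq:def-pt-infty} only assign mass on a finite set of points), and verify that $\cj^*_\beta=\cj^*_\alpha$ in the definition of $\cj$ so that the indexing in the three cases is consistent. Once this accounting is done, the rest is essentially one line of algebra per case.
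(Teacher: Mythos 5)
Your proposal is correct and follows exactly the route the paper intends (the paper states the lemma without a separate proof, as a direct consequence of inverting the direction relation~\eqref{eq:direction} and invoking Lemma~\ref{lem:tp-non-degenere}): computing $p'(A_0)$ and $p'(A_j)$ from~\eqref{eq:def-pt}--\eqref{eq:def-pt-infty}, solving for $\beta_j$ in terms of $(\theta,\alpha)$, and reading off the side conditions ($g_{A_0}(\theta)<\theta$ from $\beta\neq\zero$, $g_{A_j}(\theta)<\infty$ from~\eqref{eq:def-cond-prob}, and the structural conditions from Lemma~\ref{lem:tp-non-degenere}) is precisely the intended argument, and your algebra in all three cases checks out.
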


For $\alpha\in \Delta^*_J$, we get that  $p_{1, \alpha}=\tilde{p}_{1,\beta}$ with
 $\beta_j=p(A_0^c)\, \alpha_j/p(A_j)$
satisfies~\eqref{assumption-p}, and thus that $(1, \alpha)$ is
$(p,\ca)$-compatible. So $\Delta^*_J$ is indeed the set of all possible directions
 of $(p, \ca)$-compatible probability distributions.
We however complete this picture  with the following result.

\begin{prop}[Possible directions]
  \label{prop:admiss}
  For        every
  $\alpha\in \Delta_J^*$, there  exists $\theta\in (0, 1)$ such that
$(\theta, \alpha)$ is $(p,\ca)$-compatible, that is, such that $(\theta,
\beta)$ is    $(p,\ca)$-compatible
for some $\beta\in  \R^J_+$ with $\beta\neq \zero$ and
 $\alpha$ is the direction of $\tilde p_{\theta, \beta}$.
\end{prop}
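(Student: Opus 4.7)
The plan is to construct, for any given $\alpha\in\Delta_J^*$, a pair $(\theta,\beta)$ with $\theta\in(0,1)$ matching the recipe of Lemma~\ref{lem:p-qa}~\ref{item:p-alpha-q}, and then to verify that it is $(p,\ca)$-compatible with direction $\alpha$.

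First I will pick $\theta$. Since the $A_j$ for $j\in\II{1,J}$ are non-empty subsets of $\supp(p)$, we have $p(A_0)<1$, hence $g_{A_0}(1)<1$. The function $h(\theta)=\theta-g_{A_0}(\theta)$ is concave on $\R_+$ (as $g_{A_0}$ is convex), vanishes at $\theta_{\min}\in[0,1)$ by~\eqref{eq:def-qmin0} and Lemma~\ref{lem:q-min0}, and satisfies $h(1)>0$. Concavity then forces $h>0$ on the non-empty interval $(\theta_{\min},1)\subset(0,1)$, and I will fix any $\theta$ in this interval.

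Next I will define $\beta$ by the formula forced by the parametrization of Lemma~\ref{lem:p-qa}~\ref{item:p-alpha-q}:
\[
\beta_0=\theta^{-1},\qquad \beta_j=\frac{\alpha_j\,h(\theta)}{\theta\,g_{A_j}(\theta)}\quad\text{for }j\in\II{1,J}.
\]
Since each $A_j$ is a non-empty subset of $\supp(p)$, we have $g_{A_j}(\theta)\in(0,+\infty)$ for $\theta\in(0,1)$, so $\beta$ is well defined; by the choice of $\theta$, $\beta_j>0$ precisely for $j$ with $\alpha_j>0$, and $\beta\neq\zero$ because $|\alpha|=1$.

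I will then verify the normalization and the direction by a direct computation. Using $\sum_{j:\alpha_j>0}\alpha_j=1$,
\[
\beta_0\,g_{A_0}(\theta)+\sum_{j:\alpha_j>0}\beta_j\,g_{A_j}(\theta)=\frac{g_{A_0}(\theta)}{\theta}+\frac{h(\theta)}{\theta}\sum_{j:\alpha_j>0}\alpha_j=1,
\]
so $(\theta,\beta)\in\param(p,\ca)$, and together with $\beta\neq\zero$ this gives $(\theta,\beta)\in\param^{**}(p,\ca)\subset\param^*(p,\ca)$. The same computation yields $\tilde p_{\theta,\beta}(A_j)=\alpha_j h(\theta)/\theta$ for $j$ with $\alpha_j>0$, and $a:=1-\tilde p_{\theta,\beta}(A_0)=h(\theta)/\theta>0$, so the direction of $\tilde p_{\theta,\beta}$ in the sense of Definition~\ref{defi:direction} is $a^{-1}\tilde p_{\theta,\beta}(\ca)=\alpha$.

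It remains to check that $\tilde p_{\theta,\beta}$ satisfies~\eqref{assumption-p}, which by Lemma~\ref{lem:tp-non-degenere}~\ref{item:def-cond-struct-ndg} reduces to $0\in A_\cj$ and $p(A_\cj\cap\{0,1\}^c)>0$. The first is immediate: either $0\in A_0\subset A_\cj$, or $0\in A_j$ for some $j\geq 1$ and then $\alpha\in\Delta_J^*$ forces $\alpha_j>0$, so $j\in\cj$. The second is the main obstacle, and I would handle it by a short case analysis on $\cj^*_\beta=\{j:\alpha_j>0\}$: combining the pairwise disjointness of the $A_j$'s with non-triviality of $p$ (which gives $\supp(p)\not\subset\{0,1\}$) and the exclusion clause that $\alpha_j=1$ is forbidden whenever $A_0\cup A_j\subset\{0,1\}$, one rules out the degenerate situation $A_\cj\subset\{0,1\}$. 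The remaining pieces of the argument are routine once the formula for $\beta$ is read off from the normalization constraint~\eqref{eq:def-cond-prob}.
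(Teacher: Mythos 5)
Your construction is the paper's: you pick $\theta\in(0,1)$ with $\theta>g_{A_0}(\theta)$ (your concavity argument for $h(\theta)=\theta-g_{A_0}(\theta)$ on $(\theta_{\min},1)$ replaces the paper's two-case discussion according to $0\in A_0$ or not, but is the same substance), you define $\beta_0=\theta^{-1}$ and $\beta_j=\alpha_j(\theta-g_{A_0}(\theta))/(\theta g_{A_j}(\theta))$ exactly as in the paper, and you verify~\eqref{eq:def-cond-prob}, $\beta\neq\zero$, and the direction by the same computation (the paper leaves the direction identity implicit; making it explicit is a small plus).

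The only place you and the paper diverge is the last step, the non-triviality of $\tilde p_{\theta,\beta}$: the paper disposes of it in one sentence by invoking $\alpha\notin\Delta_J^o$ together with Lemma~\ref{lem:tp-non-degenere}~\ref{item:def-cond-struct-ndg}, while you flag $p(A_\cj\cap\{0,1\}^c)>0$ as the main obstacle and sketch a case analysis. Be aware that the sketch, as stated, cannot be completed in one corner configuration which the written definition of $\Delta_J^o$ does not exclude: take $A_0=\emptyset$, two indices $j_1\neq j_2$ with $A_{j_1}=\{0\}$, $A_{j_2}=\{1\}$, $\alpha$ supported on $\{j_1,j_2\}$ with both entries in $(0,1)$, and the rest of $\supp(p)$ (which meets $\{0,1\}^c$ by~\eqref{assumption-p}) carried by classes $A_j$ with $\alpha_j=0$ and $0\notin A_j$. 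Then $\alpha\in\Delta_J^*$ literally, yet $A_\cj=\{0,1\}$, so every $\tilde p_{\theta,\beta}$ with direction $\alpha$ is supported in $\{0,1\}$ and violates~\eqref{assumption-p}; neither disjointness, nor $\supp(p)\not\subset\{0,1\}$, nor the clause forbidding $\alpha_j=1$ with $A_0\cup A_j\subset\{0,1\}$ rules this out. This is a defect of the stated description of $\Delta_J^o$ (and of the paper's own one-line final assertion) rather than of your argument specifically: outside this configuration (in particular whenever $\cj^*$ is a singleton, or $A_\cj\not\subset\{0,1\}$ for any other reason) your case analysis does close, and your proof then coincides with the paper's.
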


\begin{proof}
  We recall the convention $g_{A_0}=0$ if $A_0=\emptyset$.
  Let us  first prove  that there  exists $\theta\in  (0, 1)$  such that
  $\theta>g_{A_0}(\theta)$.  If $0\in A_0$, then $g_{A_0}(0)=p(0)>0$ and
  $g_{A_0}(1)<1$,   so  there   exists   $\theta_0\in(0,1)$  such   that
  $\theta_0=g_{A_0}(\theta_0)$, and then  $g'_{A_0}(\theta_0)<1$. This implies
  $\theta>g_{A_0}(\theta)$    for    $\theta\in    (\theta_0,1)$.     If
  $0\not\in A_0$, then $g_{A_0}(0)=0$ and  $g_{A_0}(1)<1$, so $0$ is the
  only  root   of  $\theta=g_{A_0}(\theta)$   in  $[0,  1]$,   and  thus
  $\theta>g_{A_0}(\theta)$ for $\theta\in (0,1)$.

\medskip

Let us now fix $\theta\in (0, 1)$ such that $\theta > g_{A_0}(\theta)$. We set
for all $j\in \II{1,J}$:
\[
 \beta_j
=\alpha_j\frac{\theta-g_{A_0}(\theta)}{\theta g_{A_j}(\theta)}\ge 0,
\]
and $\beta _0=\theta^{-1}$.
Since $\sum_{j=1}^J \alpha _j=1$, we have:
\[
\sum_{j\in \II{0, J}}\beta_jg_{A_j}(\theta)=\theta^{-1}g_{A_0}(\theta)
+\sum_{j\in \II{1, J}}\alpha_j\theta^{-1}(\theta-g_{A_0}(\theta))
=1.
\]
Hence Condition~\eqref{eq:def-cond-prob} is satisfied.
Moreover we have:
\[
\tilde
p_{\theta,\beta}(A_0^c)=1- \tilde
p_{\theta,\beta}(A_0)
=\theta^{-1}\,
(\theta-g_{A_0}(\theta)) >0.
\]
Finally,  $\alpha\in  \Delta_J^*$ (and  thus  $\alpha\not\in\Delta_J^o$)
insures       that       $\tilde       p_{\theta,\beta}$       satisfies
Lemma~\ref{lem:tp-non-degenere}~\ref{item:def-cond-struct-ndg}, that is,
$\tilde p_{\theta,\beta}$ is non trivial.
\end{proof}

\subsection{Properties of the mean of $p_{\theta, \alpha}$}
\label{sec:mean-alpha}

For $\alpha\in \Delta_J^*$,  we consider the following set:
\begin{equation}
  \label{eq:def-I-alpha}
  I_\alpha=\left\{\theta\in [0, +\infty ]\, \colon\, (\theta,\alpha)
    \text{ is  $(p,
\ca)$-compatible}\right\}.
 \end{equation}

Notice that $1\in I_\alpha$.
  Note  $\rho_\cj$  the radius of convergence
of $\sum_{j\in\cj} g_{A_j}$ or equivalently:
\begin{equation}
   \label{eq:def-rho-J}
  \rho_\cj=\min _{j\in \cj} \rho_{A_j}\in [1, +\infty ].
\end{equation}

We define:
\begin{equation}
   \label{eq:def-qmin-max}
    \theta_{\min}=\inf I_\alpha \in [0, 1)
    \quad\text{and}\quad
  \theta_{\max}=\sup I_\alpha  \in [1, \rho_\cj ] .
\end{equation}
  On  the one  hand, notice  that $\theta_{\min}$  is the  only root  of
  $g_{A_0}(\theta)=  \theta$   in  $[0,  1)$,  so   this  definition  is
  consistent  with~\eqref{eq:def-qmin0}, and  thus $\theta_{\min}$  does
  not  depend  on  $\alpha$.   On  the other  hand,  we  have  that
    $\theta_{\max} $ depends on the support of $\alpha$ as:
\begin{equation}
   \label{eq:q-max}
 \theta_{\max} = \max \big(\rho_{\cj^*}\, ,\,  \sup\{ \theta\in [1, \rho_{A_0}) \,
 \colon\, g_{A_0}(\theta)<\theta\}\big)
 \quad\text{with}\quad
 \rho_{\cj^*}=\min _{j\in \cj^*} \rho_{A_j} .
\end{equation}
  We also have that  $(\theta_{\min}, \theta_{\max})\subset
  I_\alpha$ and that
  $\theta\mapsto p_{\theta, \alpha}$ is continuous on $I_\alpha$ for the
  norm of the total variation.  The  next result is a direct consequence
  of    Lemma~\ref{lem:p-qa}   (the    first    point    is   also    in
  Lemma~\ref{lem:q-min0}).

\begin{lem}
  \label{lem:q-min}
  We  have that:
\begin{enumerate}[(i)]
\item   $\theta_{\min}=0$ if and only if  $0\not\in A_0$,
  \item \label{it:q=0-in-I}
  $\theta_{\min} \in I_\alpha$ if and  only if
  $0\not\in A_0$ and $ \max_{j\in\cj^*}\min (A_j)>1$.
  \item If $0\in A_0$, then we have
$I_\alpha\subset (0, +\infty )$.
\end{enumerate}
  \end{lem}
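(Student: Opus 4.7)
The plan is to deduce each point by applying the three case characterizations of $(p,\ca)$-compatibility given in Lemma~\ref{lem:p-qa} to the candidate endpoint values of $I_\alpha$, so that the hard work is really already done in that lemma.

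For point (i), there is nothing new to prove: the definition of $\theta_{\min}$ in~\eqref{eq:def-qmin-max} coincides with~\eqref{eq:def-qmin0}, and then Lemma~\ref{lem:q-min0} gives the equivalence $\theta_{\min}=0\Longleftrightarrow 0\notin A_0$.

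For point (ii), I would split according to whether $\theta_{\min}=0$ or not, using (i). If $\theta_{\min}=0$, then $0\notin A_0$, and by the degenerate case $\theta=0$ of Lemma~\ref{lem:p-qa}, one has $0\in I_\alpha$ if and only if the extra condition $\max_{j\in\cj^*}\min(A_j)>1$ holds, which is precisely the second half of the claim. If $\theta_{\min}>0$, then by (i) we have $0\in A_0$, so $\theta_{\min}\in(0,1)$ and we are in the non-degenerate regime of Lemma~\ref{lem:p-qa}, which requires $g_{A_0}(\theta_{\min})<\theta_{\min}$; but by definition $g_{A_0}(\theta_{\min})=\theta_{\min}$, so $\theta_{\min}\notin I_\alpha$. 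Since in this case the condition $0\notin A_0$ fails, the equivalence also holds.

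For point (iii), assume $0\in A_0$. By (i) we have $\theta_{\min}>0$, and by the argument in (ii), $\theta_{\min}\notin I_\alpha$. It remains to rule out $+\infty\in I_\alpha$: but the degenerate case $\theta=+\infty$ in Lemma~\ref{lem:p-qa} requires $A_0\subset\{1\}$, which is incompatible with $0\in A_0$. Together with the fact that $I_\alpha$ contains the open interval $(\theta_{\min},\theta_{\max})$ and is monotone in the sense described in~\eqref{eq:def-qmin-max}, we conclude $I_\alpha\subset(\theta_{\min},+\infty)\subset(0,+\infty)$. I don't expect any genuine obstacle here — the only thing to be careful about is treating each endpoint ($0$, $\theta_{\min}$, $+\infty$) via the correct branch of Lemma~\ref{lem:p-qa} and not confusing the non-degenerate condition $g_{A_0}(\theta)<\theta$ with the degenerate structural conditions.
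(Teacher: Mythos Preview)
Your proposal is correct and follows essentially the same route as the paper, which simply states that the lemma is a direct consequence of Lemma~\ref{lem:p-qa} (together with Lemma~\ref{lem:q-min0} for the first point). You have merely made the case analysis explicit: identifying the two definitions of $\theta_{\min}$ for (i), checking the degenerate $\theta=0$ branch versus the non-degenerate branch (where $g_{A_0}(\theta_{\min})=\theta_{\min}$ violates the strict inequality) for (ii), and excluding both endpoints $0$ and $+\infty$ for (iii).
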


\medskip

In order to consider finite means, we set for $\alpha\in \Delta^*_J$:
\begin{equation}
  \label{eq:def-q_M}
  I^\mathrm{f}_\alpha=\{\theta\in I_\alpha\, \colon\, \mu_{\theta,
    \alpha} <+\infty \}
  \quad\text{where}\quad
  \mu_{\theta, \alpha}=\mu(p_{\theta, \alpha})\in [0, +\infty ]
\end{equation}
is    the     mean    of    $p_{\theta,    \alpha}$.      Notice    that
$I_\alpha\subset I^\mathrm{f}_\alpha\cup\{\theta_{\max}\}$.

We  are now  interested  in the  existence of  a
critical   probability  distribution   among  the   $(p,\ca)$-compatible
probability distributions  $p_{\theta, \alpha}$  with a  given direction
$\alpha\in \Delta_J^*$, that is in the existence of $\theta\in I_\alpha$
such that $\mu_{\theta, \alpha}=1$.

For  $\alpha\in \Delta^*_J$ and $\theta\geq 0$ such that $\sum_{j\in\cj^*} g_{A_j}(\theta)<+\infty $, we set:
\begin{equation}
   \label{eq:def-H}
H_\alpha(\theta)=\sum_{j\in \cj^*} \alpha_j h_j(\theta),
\end{equation}
where, for  $j\in \cj^*$, $h_j(0)=\min  A_j$ and for $\theta>0$:
\[
h_j(\theta) = \frac{\theta g_{A_j}'(\theta)}{g_{A_j}(\theta)}
= \frac{\E\left[X \theta^X \ind_{\{X \in A_j\}}\right]}
{\E\left[\theta^X \ind_{\{X \in A_j\}}\right]},
\]
where     $X$    is     distributed     according     to    $p$.     For
$\theta\in      I_\alpha\cap      \R_+^*$,       we      have      using
Lemma~\ref{lem:p-qa}~\ref{item:p-alpha-q}:
\begin{equation}\label{mu-expression}
  \mu_{\theta, \alpha}
= g_{A_0}'(\theta) + \frac{\theta - g_{A_0}(\theta)}{\theta}
H_\alpha(\theta).
\end{equation}
Recall $q_1$ from~\eqref{eq:def-q1}. Using Lemma~\ref{lem:p-qa}, if $0\in I_\alpha$  we have that:
\begin{equation}\label{mu-expression-0}
\mu_{0, \alpha}
=(1-q_1) +q_1\sum_{j\in \cj^*}\alpha_j\min A_j.
\end{equation}
and if $+\infty \in I_\alpha$ that:
\begin{equation}\label{mu-expression-infinity}
\mu_{\infty , \alpha}
=(1-q_1) +q_1\sum_{j\in \cj^*}\alpha_j\max A_j.
\end{equation}

We now recall some elementary properties of the function $h_j$, see also
\cite[Lemma 3.1]{j12} for a part of  the proof.  Notice that if $A_j$ is
a singleton, say $\{k_j\}$, then the function $h_j$ is  constant equal to
$k_j$.  Recall  that  $\rho_{A_j}$  is  the  radius  of  convergence  of
$g_{A_j}$ and that $\lim_{x\rightarrow \rho_{A_j}} g_{A_j}(x)= g_{A_j}(\rho_{A_j})$, with the limit being possibly infinite.

\begin{lem}
  \label{lem:hj-monotone}
  Let $j\in \II{1, J}$ with  $\Card(A_j)\geq 2$. The function $h_j$
  defined on  $[0, \rho_{A_j})$ is  $\cc^1$ and increasing,  with
  $h_j'>0$ on $(0, \rho_{A_j})$.
   If $\rho_{A_j}=+\infty $ or if $g_{A_j}(\rho_{A_j})=+\infty $, then we have $\lim_{\theta \rightarrow
   \rho_{A_j}} h_j(\theta )=\sup A_j$.
\end{lem}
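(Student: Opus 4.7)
The plan is to interpret $h_j(\theta)$ as the mean of an exponentially tilted law concentrated on $A_j$, so that all three conclusions follow from standard moment identities together with a concentration argument on the support. For $\theta\in(0,\rho_{A_j})$, I let $X^{(\theta)}$ denote the $A_j$-valued random variable with law $\P(X^{(\theta)}=n)=\theta^n p(n)/g_{A_j}(\theta)$; this is well defined since $A_j\subset\supp(p)$ is nonempty, and by construction $h_j(\theta)=\E[X^{(\theta)}]$.

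For smoothness, setting $m=\min A_j$, I factor $\theta^m$ out of both numerator and denominator to write
\[
h_j(\theta) = \frac{\sum_{n\in A_j} n\, \theta^{n-m} p(n)}{\sum_{n\in A_j} \theta^{n-m} p(n)}.
\]
Both series are real-analytic in $\theta$ with radius of convergence $\rho_{A_j}$, and the denominator equals $p(m)>0$ at $\theta=0$. Hence $h_j$ is real-analytic, in particular $\cc^1$, on $[0,\rho_{A_j})$, and its value at $0$ is $m=\min A_j$, consistent with the convention of the statement.

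For strict monotonicity, differentiating the defining formula (equivalently, viewing $s\mapsto \log g_{A_j}(e^s)$ as a log-moment-generating function) yields the classical identity
\[
h_j'(\theta) = \frac{\Var(X^{(\theta)})}{\theta}
\quad\text{for }\theta\in(0,\rho_{A_j}).
\]
Since $\Card(A_j)\geq 2$ and $A_j\subset\supp(p)$, the variable $X^{(\theta)}$ takes at least two distinct values with positive probability, whence $\Var(X^{(\theta)})>0$ and $h_j'(\theta)>0$.

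For the boundary limit, set $M=\sup A_j\in\N\cup\{+\infty\}$. The bound $X^{(\theta)}\leq M$ almost surely gives $h_j(\theta)\leq M$. For any integer $K$ with $K<M$, one can pick $n'\in A_j$ with $n'>K$, and
\[
h_j(\theta)\geq (K+1)\,\P(X^{(\theta)}\geq K+1)
= (K+1)\left(1 - \frac{\sum_{n\in A_j,\, n\leq K}\theta^n p(n)}{g_{A_j}(\theta)}\right),
\]
so it suffices to show that the ratio tends to $0$ as $\theta\to\rho_{A_j}$. Under the hypothesis, either $\rho_{A_j}<+\infty$ and $g_{A_j}(\rho_{A_j})=+\infty$, in which case the numerator stays bounded by the finite quantity $\sum_{n\leq K}\rho_{A_j}^n p(n)$ while the denominator diverges, or $\rho_{A_j}=+\infty$, in which case $g_{A_j}(\theta)\geq p(n')\theta^{n'}$ dominates the polynomial numerator of degree at most $K<n'$ as $\theta\to+\infty$. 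In both regimes the ratio vanishes, yielding $\liminf_{\theta\to\rho_{A_j}}h_j(\theta)\geq K+1$, and letting $K\to M$ gives the claim. The only mildly subtle point is the uniform handling of the two boundary regimes; the underlying variance identity is standard from the theory of exponential families.
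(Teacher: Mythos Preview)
Your argument is correct. The paper does not actually supply a proof of this lemma; it only states the result and refers to \cite[Lemma~3.1]{j12} for ``a part of the proof.'' Your approach---factoring out $\theta^{\min A_j}$ to obtain real-analyticity at $0$, using the exponential-family variance identity $h_j'(\theta)=\theta^{-1}\Var(X^{(\theta)})$ for strict monotonicity, and the tail-mass argument for the boundary limit---is the standard route and is essentially what one finds in Janson's survey, so there is no meaningful divergence to discuss.
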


As an immediate application, we get the following result (one only needs
to     take      care     of     the     case      $\theta=0$,     where
$H_\alpha(0)=\sum_{j\in  \cj^*}  \alpha_j \min  A_j$,  and  of the  case
$\theta=+\infty                         $,                         where
$H_\alpha(\infty   )=\sum_{j\in    \cj^*}   \alpha_j   \sup    A_j$   if
$\min_{j\in \cj^*} \rho_j=+\infty $).

\begin{cor}[Regularity of $\mu_{\theta, \alpha}$]
     \label{cor:mu-cont}
  The map $\theta \mapsto \mu_{\theta,
    \alpha} $ is continuous on $I_\alpha$,  finite
  on $ I^\mathrm{f}_\alpha$,  and  $\cc^1$ on $(\theta_{\min},
  \theta_{\max})$ and also on $[0, \theta_{\max})$ if $0\in I_\alpha$.
\end{cor}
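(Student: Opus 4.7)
My plan is to exploit formula~\eqref{mu-expression}, namely $\mu_{\theta, \alpha} = g'_{A_0}(\theta) + ((\theta - g_{A_0}(\theta))/\theta)\, H_\alpha(\theta)$, combined with the regularity of the $h_j$ from Lemma~\ref{lem:hj-monotone} and the analyticity of each generating series $g_{A_j}$ inside its disc of convergence.

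First I would treat the open interior $(\theta_{\min}, \theta_{\max})$. By the compatibility conditions defining $I_\alpha$, every such $\theta$ lies strictly below each radius $\rho_{A_j}$ relevant to the formula, so $g_{A_0}$ and $g'_{A_0}$ are $\cc^\infty$ there, each $h_j$ for $j \in \cj^*$ is $\cc^1$ by Lemma~\ref{lem:hj-monotone} (the case $\Card(A_j) = 1$ reducing $h_j$ to a constant), and division by $\theta > 0$ causes no trouble; hence \eqref{mu-expression} realises $\mu_{\theta, \alpha}$ as a $\cc^1$ function of $\theta$.

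Next, if $0 \in I_\alpha$, then Lemma~\ref{lem:q-min} forces $0 \notin A_0$, so I would factor $g_{A_0}(\theta)/\theta = \sum_{n \in A_0} p(n)\, \theta^{n-1}$, a power series that is $\cc^\infty$ on $[0, \rho_{A_0})$ with value $p(1)\ind_{\{1 \in A_0\}} = 1 - q_1$ at $\theta = 0$. Moreover $H_\alpha$ is $\cc^1$ on $[0, \rho_{\cj^*})$ by Lemma~\ref{lem:hj-monotone} together with the convention $h_j(0) = \min A_j$ (which coincides with the one-sided limit $\lim_{\theta \to 0^+} \theta g'_{A_j}(\theta)/g_{A_j}(\theta)$, as direct expansion of the power series around $0$ shows). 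Consequently~\eqref{mu-expression} extends to a $\cc^1$ function on $[0, \theta_{\max})$ whose value at $0$ is $(1 - q_1) + q_1 \sum_{j \in \cj^*} \alpha_j \min A_j$, agreeing with $\mu_{0, \alpha}$ in~\eqref{mu-expression-0}.

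For continuity on all of $I_\alpha$ only the endpoints remain. The case $\theta_{\min} \in I_\alpha$ reduces via Lemma~\ref{lem:q-min} to $\theta_{\min} = 0$ and is covered above. If $+\infty \in I_\alpha$, then by Lemma~\ref{lem:p-qa} one has $A_0 \subset \{1\}$ and $\cj^* \subset \cj_\infty$, so each relevant $A_j$ is finite, Lemma~\ref{lem:hj-monotone} yields $h_j(\theta) \to \max A_j$ as $\theta \to +\infty$, and passing to the limit in~\eqref{mu-expression} reproduces~\eqref{mu-expression-infinity}. A finite $\theta_{\max} \in I_\alpha$ would be handled by Abel's theorem, since each $g_{A_j}$ converges at $\theta_{\max}$ and is therefore left-continuous there. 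Finiteness on $I^{\mathrm{f}}_\alpha$ is tautological by definition. The main delicate point I anticipate is the extension at $\theta = 0$: one must check that the apparent singularity $1/\theta$ in~\eqref{mu-expression} is cancelled by the zero of $\theta - g_{A_0}(\theta)$ at the origin, which is guaranteed precisely by the hypothesis $0 \notin A_0$ coming from Lemma~\ref{lem:q-min}.
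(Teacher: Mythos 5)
Your proposal is correct and follows essentially the same route as the paper, which treats the corollary as an immediate consequence of Lemma~\ref{lem:hj-monotone} applied to formula~\eqref{mu-expression}, with special care only at $\theta=0$ (where $H_\alpha(0)=\sum_{j\in\cj^*}\alpha_j\min A_j$ and the apparent $1/\theta$ singularity is cancelled since $0\notin A_0$) and at $\theta=+\infty$ (where $H_\alpha(\infty)=\sum_{j\in\cj^*}\alpha_j\sup A_j$); your treatment just spells these points out. The only minor refinement: at a finite $\theta_{\max}\in I_\alpha$ the mean may equal $+\infty$, so left-continuity of $g'_{A_0}$ and of the $g'_{A_j}$ entering $H_\alpha$ should be justified by monotone convergence of power series with nonnegative coefficients (valid in $[0,+\infty]$) rather than by Abel's theorem alone, which only covers the convergent case.
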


\subsection{Generic distributions}

Notice that $1 \in I_\alpha$; however we don't assume \emph{a
    priori} that $1\in I^\mathrm{f}_\alpha$ as $p_{1, \alpha}$ might have infinite mean.
 Recall $\rho_\cj=\min_{j\in \cj}
\rho_{A_j}$, see~\eqref{eq:def-rho-J}.

In the next lemmas we give preliminary results on the existence of
$\theta\in I_\alpha$ for $p_{\theta,\alpha}$ to be sub/super/-critical
according to $0$ belonging to $A_0$ or not.

\begin{lem}\label{lem:0-A-0-sub-sup}
Assume that $0\in A_0$  and let $\alpha\in \Delta_J^*$.
\begin{enumerate}[(i)]
\item\label{item:sub-0-in-min}
  There exists $\theta\in I_\alpha$ such that $\mu_{\theta,
       \alpha}< 1$. 

   \item\label{item:sub-0-in-max}
     If  $\rho_\cj=+\infty$, then  there exists  $\theta\in I_\alpha$
     such that $\mu_{\theta, \alpha}> 1$. 
\end{enumerate}
\end{lem}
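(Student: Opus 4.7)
The plan is to exploit the identity~\eqref{mu-expression}, namely
$\mu_{\theta,\alpha}=g'_{A_0}(\theta)+\frac{\theta-g_{A_0}(\theta)}{\theta}\,H_\alpha(\theta),$
together with the convex geometry of $\varphi(\theta)=g_{A_0}(\theta)-\theta$. Since $0\in A_0$, the value $\theta_{\min}\in(0,1)$ is the (unique) root of $\varphi$ in $[0,1)$; the key point is that approaching a root of $\varphi$ makes the second summand in~\eqref{mu-expression} vanish, so one only has to inspect the value of $g'_{A_0}$ there.

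For (i), non-triviality of $p$ combined with $A_j\subset\supp(p)$ being non-empty for every $j\in\II{1,J}$ gives $g_{A_0}(0)=p(0)>0$ and $g_{A_0}(1)=p(A_0)<1$. The convex function $\varphi$ is therefore positive at $0$, zero at $\theta_{\min}$, and negative at $1$, which forces $g'_{A_0}(\theta_{\min})<1$ (if equality held, convexity would keep $\varphi\geq 0$ on $[\theta_{\min},+\infty)$, contradicting $\varphi(1)<0$). Since $(\theta_{\min},\theta_{\max})\subset I_\alpha$ and $H_\alpha$ is continuous near $\theta_{\min}$ (because $\theta_{\min}<1\leq\rho_{\cj^*}$), letting $\theta\to\theta_{\min}^+$ in~\eqref{mu-expression} yields $\mu_{\theta,\alpha}\to g'_{A_0}(\theta_{\min})<1$, so some $\theta>\theta_{\min}$ close enough works.

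For (ii), the hypothesis $\rho_\cj=+\infty$ makes every $g_{A_j}$ entire for $j\in\cj$. I split into two cases according to whether $g_{A_0}$ has a second fixed point. If $\sup A_0\geq 2$, then $\varphi$ is strictly convex and tends to $+\infty$, so it has a second root $\theta_M\in(1,+\infty)$ at which it changes sign from negative to positive; the same convexity argument, applied now at the second sign-change, gives $g'_{A_0}(\theta_M)>1$. Since $\theta_M<+\infty=\rho_{\cj^*}$, $H_\alpha(\theta_M)$ is finite, and sending $\theta\to\theta_M^-$ in $(\theta_{\min},\theta_M)\subset I_\alpha$ kills the second term of~\eqref{mu-expression}, producing $\mu_{\theta,\alpha}\to g'_{A_0}(\theta_M)>1$.

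The remaining case is $A_0\subset\{0,1\}$, where $g_{A_0}(\theta)<\theta$ for every $\theta>\theta_{\min}$ and hence $I_\alpha\supset(\theta_{\min},+\infty)$. Lemma~\ref{lem:hj-monotone} combined with $\rho_{A_j}=+\infty$ gives $h_j(\theta)\to\sup A_j$ for each $j\in\cj^*$, so~\eqref{mu-expression} and~\eqref{mu-expression-infinity} yield $\mu_{\theta,\alpha}\to\mu_{\infty,\alpha}=(1-q_1)+q_1\sum_{j\in\cj^*}\alpha_j\sup A_j$ as $\theta\to+\infty$. To check $\mu_{\infty,\alpha}>1$ I use that $\alpha\in\Delta_J^*$ makes $(1,\alpha)$ $(p,\ca)$-compatible, hence $p_{1,\alpha}$ is non-trivial; with $A_0\subset\{0,1\}$ this forces the existence of some $j_0\in\cj^*$ with $\sup A_{j_0}\geq 2$. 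Disjointness of the $A_j$ and $0\in A_0$ imply $\min A_j\geq 1$ for every $j\in\II{1,J}$, so
\[
\sum_{j\in\cj^*}\alpha_j\sup A_j\geq(1-\alpha_{j_0})+2\alpha_{j_0}=1+\alpha_{j_0}>1,
\]
and since $q_1>0$ we get $\mu_{\infty,\alpha}>1$, so $\mu_{\theta,\alpha}>1$ for $\theta$ large enough in $I_\alpha$. The main obstacle is precisely this boundary situation $A_0\subset\{0,1\}$, where no finite ``second root'' of $g_{A_0}=\mathrm{id}$ is available and one is forced to send $\theta\to+\infty$ and leverage the structural consequence of $\alpha\in\Delta_J^*$ via the disjointness of the $A_j$.
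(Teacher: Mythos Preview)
Your proof is correct and follows essentially the same approach as the paper's: for (i) you let $\theta\downarrow\theta_{\min}$ and use $g'_{A_0}(\theta_{\min})<1$, and for (ii) you split into the cases $\sup A_0\geq 2$ (approach the second fixed point of $g_{A_0}$) and $A_0\subset\{0,1\}$ (send $\theta\to+\infty$), exactly as the paper does. One minor quibble: since $0\in A_0$ forces $I_\alpha\subset(0,+\infty)$, the quantity $\mu_{\infty,\alpha}$ is not literally defined here and the reference to~\eqref{mu-expression-infinity} is slightly off; you are really just computing $\lim_{\theta\to+\infty}\mu_{\theta,\alpha}$ from~\eqref{mu-expression}, which is what the paper does.
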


\begin{proof}
As $0\in A_0$, we  get  using  Lemma~\ref{lem:q-min}  that $I_\alpha\cap  [0,  1]=
(\theta_{\min}, 1]$.
  By
  continuity, we deduce from~\eqref{mu-expression} that:
 \[
 \lim_{\theta\downarrow \theta_{\min}}
  \mu_{\theta,\alpha}=g_{A_0}'(\theta_{\min})<1.
\]
This gives Point~\ref{item:sub-0-in-min}.
\medskip

We now  prove Point~\ref{item:sub-0-in-max}. On  the one hand,  if there
exists $k\in  A_0$ such that $k\geq  2$, then the function  $g_{A_0}$ is
strictly                            convex                           and
$\lim_{\theta \rightarrow\infty  } g_{A_0}(\theta)/\theta= +\infty  $
  as
$\rho_{A_0}\geq \rho_\cj=+\infty $. We deduce that
$g_{A_0}(\theta_{\max})=\theta_{\max}$, and, as $\theta_{\max}>1$, that
$g'_{A_0}(\theta_{\max})>1$.
By
continuity, we deduce from~\eqref{mu-expression} that:
 \[
 \lim_{\theta\uparrow \theta_{\max}}
  \mu_{\theta,\alpha}=g_{A_0}'(\theta_{\max})>1.
\]
On the other hand,  if $A_0\subset \{0, 1\}$, then we get that $
g_{A_0}(\theta)= p(0) + (1-q_1)  \theta$. Thus there is no root of
 $g_{A_0}(\theta)=\theta$ on $(1, +\infty )$, but we have:
\[
  \lim_{\theta\rightarrow\infty } \frac{\theta -
    g_{A_0}(\theta)}{\theta} =q_1>0.
\]
Then, use Lemma~\ref{lem:hj-monotone}
and~\eqref{mu-expression}
to deduce that:
\[
  \lim_{\theta\rightarrow +\infty }
  \mu_{\theta,\alpha}
  =(1-q_1) + q_1 \sum_{j\in \cj^*} \alpha_j \,
  \sup A_j > (1-q_1) + q_1 \sum_{j\in \cj^*} \alpha_j =  1,
\]
 where for the inequality we used that $\sup_{j\in \cj^*} \, \sup
 A_j>1$ as  $p_{\theta, \alpha}$ is
 non trivial and $\sup A_0 \leq 1$. This gives Point~\ref{item:sub-0-in-max}.
\end{proof}

Recall $ \cj_{\infty }=\{j\in \II{1, J}\, \colon\, \sup A_j <\infty
\}$.

\begin{lem}\label{lem:0-A-1-sub-sup}
 Assume that $0\not\in A_0$  and let $\alpha\in \Delta_J^*$.
\begin{enumerate}[(i)]
\item \label{item:sub-0-out-min}
  There exists $\theta\in I_\alpha$ such that $\mu_{\theta,
       \alpha}\leq 1$ if and only if:
     \begin{equation}
       \label{eq:cond-min-alpha}
       \sum_{j\in\cj^*} \alpha_j \min A_j \leq 1.
     \end{equation}
   \item \label{item:sub-0-out-max}
     If $\rho_\cj=+\infty$, then we have   $\mu_{\theta, \alpha}< 1$  for all
    $\theta\in I_\alpha$
   if and only if:
     \begin{equation}
       \label{eq:cond-max-alpha}
    A_0\subset\{1\}, \quad \cj^*\subset \cj_{\infty} ,
    \quad \text{and}\quad \sum_{j\in\cj^*} \alpha_j \max
    A_j < 1.
     \end{equation}
\end{enumerate}
\end{lem}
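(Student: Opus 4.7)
The plan is to analyse the mean
\[
\mu_{\theta,\alpha} = g'_{A_0}(\theta) + q_1(\theta)\,H_\alpha(\theta),
\qquad q_1(\theta):=1-g_{A_0}(\theta)/\theta,
\]
given by~\eqref{mu-expression}, using $\theta_{\min}=0$ under $0\notin A_0$ (Lemma~\ref{lem:q-min}), continuity of $\mu_{\cdot,\alpha}$ on $I_\alpha$ (Corollary~\ref{cor:mu-cont}), the monotonicity of $h_j$ with boundary values $h_j(0)=\min A_j$ and $\lim_{\theta\uparrow\rho_{A_j}}h_j(\theta)=\sup A_j$ (Lemma~\ref{lem:hj-monotone}), and the fact that $q_1(\theta)>0$ on $I_\alpha$. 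These give $\lim_{\theta\downarrow 0}\mu_{\theta,\alpha}=(1-q_1)+q_1\sum_{j\in\cj^*}\alpha_j\min A_j$ via~\eqref{mu-expression-0}, and, when $A_0\subset\{1\}$, the quantities $g'_{A_0}(\theta)=1-q_1$ and $q_1(\theta)=q_1$ are constants, so $\mu_{\theta,\alpha}=(1-q_1)+q_1 H_\alpha(\theta)$ with limit $(1-q_1)+q_1\sum_{j\in\cj^*}\alpha_j\max A_j$ at $\theta=+\infty$ if $\cj^*\subset\cj_{\infty}$.

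For Part~(i), set $S:=\sum_{j\in\cj^*}\alpha_j\min A_j$. The direction ``$\Leftarrow$'' follows from the boundary limit: if $S<1$, continuity produces a small $\theta\in I_\alpha$ with $\mu_{\theta,\alpha}<1$; if $S=1$, then since $\alpha\in\Delta_J^*$ forces $\alpha_{j_0}>0$ for the (unique) $j_0$ with $0\in A_{j_0}$ (so $\min A_{j_0}=0$), the identity $S=\sum_{j\neq j_0,\,j\in\cj^*}\alpha_j\min A_j$ with $\sum_{j\neq j_0,\,j\in\cj^*}\alpha_j=1-\alpha_{j_0}<1$ forces $\max_{j\in\cj^*}\min A_j>1$, so $0\in I_\alpha$ by Lemma~\ref{lem:q-min}(ii) and $\theta=0$ suits. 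For the direction ``$\Rightarrow$'', assume $S>1$. Since $0\notin A_0$, either $A_0=\emptyset$ (trivially) or $\min A_0\geq 1$; in either case $h_0(\theta):=\theta g'_{A_0}(\theta)/g_{A_0}(\theta)\geq 1$, equivalently $g'_{A_0}(\theta)\geq 1-q_1(\theta)$. Hence
\[
\mu_{\theta,\alpha}\geq 1+q_1(\theta)\bigl(H_\alpha(\theta)-1\bigr),
\]
and the monotonicity $H_\alpha(\theta)\geq H_\alpha(0)=S>1$ combined with $q_1(\theta)>0$ yields $\mu_{\theta,\alpha}>1$ on all of $I_\alpha$.

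For Part~(ii), assume $\rho_\cj=+\infty$. Direction ``$\Leftarrow$'' is direct: (a) gives $\mu_{\theta,\alpha}=(1-q_1)+q_1H_\alpha(\theta)$, while (b), (c) combine into $H_\alpha(\theta)\leq \sum\alpha_j\max A_j<1$, so $\mu_{\theta,\alpha}<1$ throughout $I_\alpha$. Direction ``$\Rightarrow$'' is handled by failing each condition in turn. If (a) fails, $g_{A_0}(\theta)-\theta$ is convex, vanishes at $0$ with negative slope, and tends to $+\infty$, so it has a unique positive root $\theta^*>1$ with $g'_{A_0}(\theta^*)>1$; since $\rho_{\cj^*}=+\infty$, $(0,\theta^*)\subset I_\alpha$ and $\mu_{\theta,\alpha}\to g'_{A_0}(\theta^*)>1$ as $\theta\uparrow\theta^*$. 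If (b) fails given (a), some $j_1\in\cj^*$ has $\sup A_{j_1}=+\infty$, so $h_{j_1}(\theta)\to\infty$ (Lemma~\ref{lem:hj-monotone}) and $\mu_{\theta,\alpha}\to\infty$. If only (c) fails, either $\sum\alpha_j\max A_j>1$ and $\mu_{\theta,\alpha}>1$ for large $\theta\in I_\alpha$, or the boundary case $\sum\alpha_j\max A_j=1$ holds; in the boundary case, $A_{j_0}=\{0\}$ is forced (the alternative $\max A_{j_0}\geq 2$ would give $\sum\geq 2\alpha_{j_0}+(1-\alpha_{j_0})=1+\alpha_{j_0}>1$, and $A_{j_0}=\{0,1\}$ is excluded by $\alpha\in\Delta_J^*$ combined with disjointness of the $A_j$), and non-triviality yields $\max_{j\in\cj^*}\max A_j>1$; Lemma~\ref{lem:p-qa}(iii) then places $\infty\in I_\alpha$ with $\mu_{\infty,\alpha}=1$, again contradicting $\mu<1$. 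The main obstacle is precisely this $\sum=1$ boundary case of Part~(ii), where the delicate interplay between $\Delta_J^*$ and disjointness of $(A_j)$ is needed to verify that $\mu_{\infty,\alpha}=1$ is genuinely attained at $\theta=\infty\in I_\alpha$ rather than being only an unattained limit.
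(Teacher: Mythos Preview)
Your proof is correct and follows essentially the same approach as the paper's own argument: both use the formula~\eqref{mu-expression} together with the boundary behaviour of $H_\alpha$ from Lemma~\ref{lem:hj-monotone}, the identification $\theta_{\min}=0$ from Lemma~\ref{lem:q-min}, and the formulas~\eqref{mu-expression-0}--\eqref{mu-expression-infinity}. The only organizational difference is in the ``$\Rightarrow$'' direction of Part~(ii) when (a) and (b) hold but (c) fails: the paper splits according to whether $\Card(A_{j_0})=1$ or $\Card(A_{j_0})\ge 2$, whereas you split according to $\sum_{j\in\cj^*}\alpha_j\max A_j>1$ versus $=1$ and then argue that the equality case forces $A_{j_0}=\{0\}$; both case splits lead to the same conclusion that some $\theta\in I_\alpha$ achieves $\mu_{\theta,\alpha}\ge 1$.
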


\begin{proof}
  We  prove   Point~\ref{item:sub-0-out-min}.   We  first   assume  that
  $\sum_{j\in   \cj^*}  \alpha_j   \min  A_j\leq   1$.   By   assumption
  $0\not\in   A_0$  and   $0\in  A_{\cj^*}$   as  $1\in   I_\alpha$  and
  $p_{1,   \alpha}$  satisfies~\eqref{assumption-p}.   According to
  Lemma~\ref{lem:q-min}, we have
    $\theta_{\min}=0$.  
    
    If  $0\in  I_\alpha$,  we  deduce
  from~\eqref{mu-expression-0}                                      that
  $\mu_{0, \alpha}=  1-q_1+ q_1 \sum_{j\in \cj^*}  \alpha_j \min A_j\leq
  1$.        
  
  If      $0\not       \in      I_\alpha$,       that      is
  $ \max_{j\in\cj^*}\min (A_j)\leq 1$,         we         get         that
  $\sum_{j\in \cj^*} \alpha_j \min A_j <1$ as $0\in A_{\cj^*}$, that is,
  $\min A_j=0$  for some $j\in  \cj^*$ and $\alpha_j>0$. We  then deduce
  from~\eqref{mu-expression},      using     that      $q_1$     defined
  in~\eqref{eq:def-q1} is positive, that:
\[
  \lim_{\theta \downarrow 0} \mu_{\theta, \alpha}= (1-q_1) +
  q_1\sum_{j\in \cj^*}\alpha_j\min A_j <1.
\]
So  by Corollary
\ref{cor:mu-cont},   there   exists   $\theta\in   (0,1]$,   such   that
$\mu_{\theta,\alpha}<1$. 

In conclusion,
Condition~\eqref{eq:cond-min-alpha} implies that
$\mu_{\theta,\alpha}\leq 1$ for some $\theta\in I_\alpha$.

\medskip

Let us now assume that $\sum_{j\in \cj^*} \alpha_j \min A_j>1$ (and thus
$0\in I_\alpha$ by Lemma~\ref{lem:q-min}).  Using~\eqref{mu-expression},
the   fact   that   the   functions  $h_j$   are   non-decreasing   (see
Lemma~\ref{lem:hj-monotone} and the fact that $h_j$ is constant equal to
$k_j$ when $A_j$ is reduced to the singleton $\{k_j\}$), we get that for
$\theta\in I_\alpha\cap(0, +\infty )$:
   \[
     \mu_{\theta, \alpha}\geq g_{A_0}'(\theta) + \frac{\theta - g_{A_0}(\theta)}{\theta}
\sum_{j\in \cj^*} \alpha_j \min A_j > g_{A_0}'(\theta) + \frac{\theta -
  g_{A_0}(\theta)}{\theta} =1+\E[(X-1)\theta^{X-1}\ind_{\{X\in A_0\}}],
\]
where $X$  has distribution  $p$. Since $0\not\in  A_0$, we  deduce that
$\E[(X-1)\theta^{X-1}\ind_{\{X\in  A_0\}}]$  is  non-negative  and  thus
$ \mu_{\theta, \alpha}> 1$.   Thanks to~\eqref{mu-expression-0}, we also
have  $\mu_{0, \alpha}>1$,  and  thus $\mu_{\theta,  \alpha}>1$ for  all
$\theta\in I_\alpha$ (notice  that for $\theta=+\infty $,  if it belongs
to   $I_\alpha$,  thanks   to~\eqref{mu-expression-infinity}  one   gets
$\mu_{\infty , \alpha}\geq  \mu_{0, \alpha} > 1$).  This  ends the proof
of Point~\ref{item:sub-0-out-min}.

\medskip

We prove Point~\ref{item:sub-0-out-max}.
If $A_0\subset \{1\}$, we get $\theta_{\min}=0$ by Lemma~\ref{lem:q-min}
and $\theta_{\max}=+\infty $ as $g_{A_0}(\theta)=(1-q_1)\theta<\theta$
and $\rho_\cj=+\infty $, see~\eqref{eq:q-max}.
This gives that $(0, +\infty )\subset I_\alpha$ and  we have:
\begin{equation}
   \label{eq:a0in1}
   \mu_{\theta, \alpha}=1- q_1 + q_1 H_\alpha (\theta)
\quad\text{for}\quad
\theta\in I_\alpha.
\end{equation}

So,    if~\eqref{eq:cond-max-alpha}   holds,    we   have    thanks   to
Lemma~\ref{lem:hj-monotone}    that    $\mu_{\theta,   \alpha}<1$    for
$\theta\in    (0,   +\infty    )\subset   I_\alpha$.    We   also    get
$\mu_{0,   \alpha}<1$,  resp.    $\mu_{\infty  ,   \alpha}<1$,  whenever
$0\in  I_\alpha$,   resp.  $+\infty   \in  I_\alpha$.  This   gives  that
$\mu_{\theta, \alpha} <1$ for all $\theta\in I_\alpha$.

\medskip

Let us assume  that $A_0\subset \{1\}$ and $\sup A_j=+\infty  $ for some
$j\in   \cj^*$ (that is $\cj^*\not \subset \cj_\infty $).
Since $\rho_\cj=+\infty $, Lemma~\ref{lem:hj-monotone} gives that
$\lim_{\theta\rightarrow \infty  } \mu_{\theta,  \alpha} =+\infty $.

We now assume that $A_0\subset \{1\}$, $\cj^* \subset \cj_\infty $ and  $\sum_{j\in
  \cj^*}   \alpha_j  \max   A_j\geq   1$.
Let $j_0\in \cj^*$ such that $0\in A_{j_0}$. If $A_{j_0}=\{0\}$, we get
that $\theta_{\max}=+\infty $ belongs to $I_\alpha$ and~\eqref{eq:a0in1}
implies that $\mu_{\infty , \alpha}\geq 1$.  If $\Card(A_{j_0})\geq 2$,
then we get that $\min_{j\in \cj^*} \max A_j\geq 1$ and $\max_{j\in
  \cj^*} \max A_j> 1$ (as $p_{\theta, \alpha}$
satisfies~\eqref{assumption-p}), and thus $\sum_{j\in
  \cj^*}   \alpha_j  \max   A_j>  1$.
Using~\eqref{eq:a0in1} , we get that
$\lim_{\theta\rightarrow \infty  } \mu_{\theta,  \alpha} >1$.
 In both cases,   there exists $\theta\in I_\alpha$ such that
  $\mu_{\theta, \alpha}\geq 1$.

 We now assume that $A_0\not\subset  \{1\}$. Then the function $g_{A_0}$
 is  increasing  and   strictly  convex.   As
 $\rho_\cj=\infty $, we deduce from~\eqref{eq:q-max} that
 $\theta_{\max}\in (1, +\infty )$ is the maximal root of
 $g_{A_0}(\theta)=\theta$, and thus
 $g_{A_0}'(\theta_{\max}) > 1$.  Then, we get from~\eqref{mu-expression}
 that
 $   \lim_{\theta  \uparrow   \theta_{\max}}   \mu_{\theta,  \alpha}   =
 g_{A_0}'(\theta_{\max}) > 1$.

 In       conclusion,
 if~\eqref{eq:cond-max-alpha}   does   not   hold  then   there   exists
 $\theta\in I_\alpha$ such that $\mu_{\theta, \alpha}\geq 1$.
\end{proof}

Recall $  \cj^ {**}=\{j\in  \cj^*\,  \colon\, \Card(A_j)\geq  2\}$.
We consider the following condition:
\begin{equation}
  \label{eq:patho}
    A_0\cap \{1\}^c\neq \emptyset
  \quad\text{or}\quad
  \cj^{**} \neq \emptyset.
\end{equation}

\begin{prop}[Monotonicity of $\theta\mapsto \mu_{\theta, \alpha}$]
  \label{prop:uniq}
  Let $\alpha\in\Delta_J^*$.
  \begin{enumerate}[(i)]
  \item \label{it:mu=cst}
    Assume~\eqref{eq:patho}  does  not   hold.  Then $I_\alpha=I^\mathrm{f}_\alpha=[0,
    +\infty ]$ and the map $\theta\mapsto p_{\theta, \alpha}$ (as well
    as the map $\theta\mapsto \mu_{\theta, \alpha}$) is constant.
 
  \item          Assume~\eqref{eq:patho}           holds.           Then
    $\partial_\theta   \mu_{\theta,    \alpha}>0$   on    the   interval
    $\{\theta\in I^\mathrm{f}_\alpha\, \colon\, \mu_{\theta, \alpha}\leq
    1\}$.    If  this   set  is   not   empty,  then   its  minimum   is
    $\theta_{\min}$.   Furthermore, there  exists  at  most one  element
    $\theta\in I_\alpha$ such that $\mu_{\theta, \alpha}=1$.
\end{enumerate}
\end{prop}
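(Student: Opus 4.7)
For part (i), the formulas of Lemma~\ref{lem:p-qa} degenerate completely when $A_0 \subset \{1\}$ and $\cj^{**} = \emptyset$. Indeed $g_{A_0}(\theta)$ is then identically $0$ or $p(1)\theta$, so $F(\theta) = (\theta - g_{A_0}(\theta))/\theta = q_1$ is constant in $\theta$; and for each $j \in \cj^*$ the singleton $A_j = \{k_j\}$ gives $g_{A_j}(\theta) = p(k_j)\theta^{k_j}$, so that $p_{\theta, \alpha}(k_j) = \alpha_j F(\theta)\theta^{k_j - 1} p(k_j)/g_{A_j}(\theta) = \alpha_j q_1$ with the $\theta$-dependence cancelling; and $p_{\theta, \alpha}(1) = p(1)$ if $1 \in A_0$. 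These values agree with the degenerate expressions at $\theta \in \{0, +\infty\}$, so $\theta \mapsto p_{\theta, \alpha}$ is constant on $[0, +\infty]$, which forces $I_\alpha = I^\mathrm{f}_\alpha = [0, +\infty]$ (the support is bounded) and the constancy of $\theta \mapsto \mu_{\theta, \alpha}$.

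For part (ii) I plan to differentiate \eqref{mu-expression} and exploit a variance decomposition. For each $j \in \{0\} \cup \cj^*$, introduce the exponentially tilted variable $X_j$ on $A_j$ with law $\theta^k p(k)\ind_{\{k \in A_j\}}/g_{A_j}(\theta)$, so that $h_j(\theta) = \E[X_j]$ and $\theta h_j'(\theta) = \Var(X_j)$; together with $g'_{A_0} = g_{A_0} h_0/\theta$, $F' = g_{A_0}(1 - h_0)/\theta^2$, and $\theta^2 g''_{A_0} = g_{A_0}[\Var(X_0) + h_0(h_0-1)]$, a direct algebraic simplification yields
\begin{equation*}
\theta^2 \, \partial_\theta \mu_{\theta, \alpha} = g_{A_0}(\theta)\, \E\big[(X_0-1)(X_0-H_\alpha(\theta))\big] + \theta F(\theta) \sum_{j \in \cj^*} \alpha_j \Var(X_j).
\end{equation*}
Writing $\E[(X_0 - 1)(X_0 - H_\alpha)] = \Var(X_0) + (h_0 - 1)(h_0 - H_\alpha)$ separates a non-negative variance piece from a signed mean piece, so the core task is to show $(h_0 - 1)(h_0 - H_\alpha) \geq 0$ throughout $\{\mu \leq 1\} \cap I^\mathrm{f}_\alpha$. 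I split on whether $0 \in A_0$. If $0 \in A_0$, then pairwise disjointness forces $A_j \subset \N^*$ for every $j \in \cj^*$, so $h_j \geq 1$ and $H_\alpha \geq 1$; writing $\mu_{\theta, \alpha} = p_0 h_0 + (1 - p_0) H_\alpha$ with $p_0 = g_{A_0}(\theta)/\theta \in (0, 1)$, the inequality $\mu \leq 1$ combined with $H_\alpha \geq 1$ forces $h_0 \leq 1$, hence $h_0 \leq 1 \leq H_\alpha$ and both factors are non-positive. If $0 \notin A_0$, then $X_0 \geq 1$ gives $h_0 \geq 1$, and the same identity now forces $H_\alpha \leq 1 \leq h_0$, so both factors are non-negative.

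To upgrade to strict positivity of $\partial_\theta \mu_{\theta, \alpha}$ under Hypothesis~\eqref{eq:patho}, I use a secondary split. If $\cj^{**} \neq \emptyset$ then $\Var(X_{j^*}) > 0$ for some $j^* \in \cj^{**}$ and the second sum is strictly positive on the interior $F > 0$. If instead $A_0 \cap \{1\}^c \neq \emptyset$ and $\cj^{**} = \emptyset$, then either $|A_0| \geq 2$ (giving $\Var(X_0) > 0$) or $A_0 = \{\ell\}$ with $\ell \in \{0, 2, 3, \ldots\}$; for $\ell = 0$ one has $h_0 = 0$ and $H_\alpha \geq 1$ so $(h_0-1)(h_0-H_\alpha) = H_\alpha > 0$, and for $\ell \geq 2$ the constraint $p_0 \ell + (1-p_0) H_\alpha \leq 1$ forces $H_\alpha < 1 < \ell = h_0$, again giving strict positivity. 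The three announced conclusions then follow automatically: $\{\mu \leq 1\} \cap I_\alpha$ must be an interval, since a re-entry from above $1$ would contradict $\partial_\theta \mu > 0$ at the re-entry point; on this interval $\mu$ is strictly increasing, so its infimum is attained at $\theta_{\min}$; and $\mu = 1$ has at most one solution in $I_\alpha$ (any such solution belongs to $\{\mu \leq 1\}$ where $\mu$ is strictly increasing). The main obstacle I anticipate is the final subcase $A_0 = \{\ell\}$ with $\ell \geq 2$ and $\cj^{**} = \emptyset$, where strict positivity must be extracted solely from the mean piece $(h_0-1)(h_0-H_\alpha)$ and one has to verify carefully, using only $\mu \leq 1$ and the compatibility constraints, that $H_\alpha < 1$ strictly.
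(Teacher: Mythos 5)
Your proposal is correct, and in the key step it takes a genuinely different route from the paper. The paper differentiates \eqref{mu-expression} into the three terms $g''_{A_0}+fH_\alpha'+f'H_\alpha$ (see \eqref{eq:mu'}) and then argues by cases: when $0\in A_0$ it controls the sign of $f'$ through the identity \eqref{eq:qf'q}, and when $0\notin A_0$ with $A_0\cap\{1\}^c\neq\emptyset$ it introduces the auxiliary family $m_\gamma(\theta)=g'_{A_0}(\theta)+\gamma f(\theta)$ of \eqref{eq:def-mg}, shows $m_\gamma\leq 1$ somewhere forces $\gamma<2\leq \min(A_0\cap\{1\}^c)$ hence $m_\gamma'>0$, and compares $\mu_{\theta,\alpha}$ with $m_{H_\alpha(\theta_*)}$ using the monotonicity of $H_\alpha$. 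You instead establish the single exact identity $\theta^2\,\partial_\theta\mu_{\theta,\alpha}=g_{A_0}(\theta)\,\E\bigl[(X_0-1)(X_0-H_\alpha(\theta))\bigr]+\theta F(\theta)\sum_{j\in\cj^*}\alpha_j\Var(X_j)$ (I checked the three tilting identities and the algebra; it is right), and control the signed piece $(h_0-1)(h_0-H_\alpha)$ via the convex-combination form $\mu_{\theta,\alpha}=p_0h_0+(1-p_0)H_\alpha$, which shows that on $\{\mu\leq 1\}$ the quantities $h_0$ and $H_\alpha$ lie on opposite sides of $1$; this treats the cases $0\in A_0$ and $0\notin A_0$ uniformly and makes the mechanism more transparent, whereas the paper's argument needs the separate $m_\gamma$ comparison trick but avoids writing variances explicitly. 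Your anticipated obstacle ($A_0=\{\ell\}$, $\ell\geq 2$, $\cj^{**}=\emptyset$) closes immediately from your own identity: $\mu\leq 1$ and $p_0>0$ give $(1-p_0)H_\alpha\leq 1-p_0\ell<1-p_0$, hence $H_\alpha<1<\ell=h_0$ strictly (and if $1-p_0\ell<0$ the point is not in the sublevel set, so there is nothing to prove). Two harmless caveats, both shared with the paper's own proof: the identity is used for $\theta\in I^\mathrm{f}_\alpha\cap\R_+^*$, with the variance terms possibly infinite at $\theta_{\max}$ (the paper makes the same allowance after \eqref{eq:mu'}), and at the endpoint $\theta=0$ (when $0\in I_\alpha$) one only gets strict increase on the sublevel set rather than a pointwise strict derivative, which is exactly what the paper's argument yields there as well and is all that the stated consequences (down-set structure, minimum at $\theta_{\min}$, uniqueness of the critical parameter) require.
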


\begin{rem}[$\mu_{\theta,\alpha}$ independent of $\theta$]
  \label{rem:mu=cst}
Recall $q_1=1- p(1)\ind_{\{1\in A_0\}}$.
  \begin{enumerate}[(a)]
  \item\label{it:mu=cst-1}
    If~\eqref{eq:patho} holds and  the map $\theta\mapsto
\mu_{\theta, \alpha}$ is constant, then $\mu_{\theta, \alpha}>1$.

\item
  If~\eqref{eq:patho}  does  not   hold, that is $A_0\subset\{1\}$ and   $A_j$ is a singleton, say $\{k_j\}$ for
  all  $j\in \cj^*$  (with  one of  the  $k_j$ equal  $0$ and another  larger than $1$ in order  for
  $p_{\theta,     \alpha}$    to     be     non    trivial),     then
  $I_\alpha=[0,        +\infty        ]$       and $  \mu_{\theta,
    \alpha}= (1-q_1)+q_1  \sum_{j\in  \cj^*}  \alpha_j  \, k_j\in (0, +\infty )$.
 Thus, we recover
 Proposition~\ref{prop:uniq}~\ref{it:mu=cst}.

\item Consider the example: $A_0=\{1, k\}$ with $k\geq 2$,
$  \alpha\in \Delta_J^*$ and $A_j=\{k_j\}$ for $j\in \cj^*$ (and thus
$\cj^{**}= \emptyset$) such that $k=\sum_{j\in
    \cj^*} \alpha_j k_j$. Notice that $H_\alpha$ is constant equal to
  $k$ and that the mean $\mu_{\theta, \alpha}$ is constant as:
\[
    \mu_{\theta, \alpha}= p(1) + k \theta^{k-1} p(k) +  (1- p(1) -
  \theta^{k-1} p(k)) H_\alpha(\theta)= 1 + (1-p(1))(k-1).
\]
Thus Point~\ref{it:mu=cst-1} of this remark is not void.
\end{enumerate}
\end{rem}

\begin{rem}[Is $\mu_{\theta, \alpha}$  monotone in $\theta$?]
  \label{rem:mu-monotone}
  Consider the probability $p$ defined by $p(2)=1/2$ and
  $p(0)=p(4)=1/4$; $J=1$ (and thus $\alpha=1$) and $A_1=\{0, 4\}$ (and
  thus $A_0=\{2\}$). We get that $I_1=(0, 2)$ as by
  Lemma~\ref{lem:q-min}~\ref{it:q=0-in-I} $\theta_{\min}=0\not \in I_1$
  and $\theta_{\max}=2$ by~\eqref{eq:q-max}.  It is elementary to check that
  $\lim_{\theta\rightarrow 0+}\mu_{\theta, 1}=0$,  $\mu_{\theta, 1}
  \simeq 1$ for $\theta\simeq 0.36$ and that $\partial_\theta
  \mu_{\theta, 1} <0$ if and only if  $\theta\in [\theta_0, \theta_1]$
  with $\theta_0\simeq 1.24$ and $\theta_1\simeq  1.92$. This provides
  an example where the function $\theta\mapsto \mu_{\theta, \alpha}$ is
  not monotone.
\end{rem}

\begin{proof}[Proof of Proposition~\ref{prop:uniq}]

  Let $\theta\in I_\alpha^\mathrm{f}\cap \R_+^*$. Let $X_\theta$
  be a random variable with distribution $p_{\theta, \alpha}$. We have:
  \[
    \P(X_\theta\in A_0)=\theta^{-1}\, g_{A_0}(\theta)
    \quad\text{and}\quad
    g'_{A_0}(\theta)=\E[X_\theta \ind_{\{X_\theta\in A_0\}}].
  \]
Set:
\[
    f(\theta)= \frac{\theta- g_{A_0}(\theta)}{\theta}=\P(X_\theta\not\in A_0),
 \]
so                                                                  that
$\mu_{\theta ,  \alpha} = g'_{A_0}(\theta) +  f(\theta)
H_\alpha(\theta)$. We get:
\begin{equation}
   \label{eq:mu'}
  \partial_\theta \mu_{\theta, \alpha}= g''_{A_0}(\theta)+ f(\theta)
  H_\alpha'(\theta) + f'(\theta) H_\alpha(\theta).
\end{equation}
(This   expression   can   possibly   be  equal   to   $+\infty   $   if
$\theta=\theta_{\max}$.)  We have $H_\alpha(\theta)\in (0, +\infty ]$ as
$\theta>0$.  We have that $H_\alpha'(\theta)= 0$ if $\cj^{**}=\emptyset$
and,           by            Lemma~\ref{lem:hj-monotone},           that
$H_\alpha'(\theta)\in   (0,  +\infty   ]$  otherwise.    We  also   have
$g''_{A_0}(\theta)=0$     if      $A_0\subset     \{0,      1\}$     and
$g''_{A_0}(\theta)\in  (0, +\infty  ]$  otherwise.   Eventually we  have
$f(\theta)>0$ and:
\begin{equation}
   \label{eq:qf'q-1}
f'(\theta)
=\frac{g_{A_0}(\theta)-\theta\,  g_{A_0}'(\theta)}{\theta^2}
=\theta^{-1} \E\left[(1-X_\theta)\ind_{\{X_\theta\in A_0\}}\right],
\end{equation}
which is finite as $\theta\in I_\alpha^\mathrm{f}$.
Notice that $\mu_{\theta, \alpha}=\E[X_\theta]$, which is finite,  and thus:
\begin{equation}
   \label{eq:qf'q}
\theta\, f'(\theta)
=  \E\left[(X_\theta-1)\ind_{\{X_\theta\not \in A_0\}}\right] + (1-
\mu_{\theta, \alpha}).
\end{equation}

\medskip

\textbf{Case $0\in A_0$.} We first assume  that $0\in A_0$, and thus  $I_\alpha\subset \R_+^*$ and
$\theta_{\min}\not\in I_\alpha$, see Lemma~\ref{lem:q-min}.  We consider
$\theta\in           {I}_\alpha^\mathrm{f}$          such           that
$\mu_{\theta,  \alpha}\leq  1$.   We  deduce  from~\eqref{eq:qf'q}  that
$f'(\theta)=0$ if $\cj^*$ is reduced to a singleton, say $\{j_0\}$, with
$A_{j_0}=\{1\}$  and $\mu_{\theta,  \alpha}=1$, and  that $f'(\theta)>0$
otherwise  (as $0\in  A_0$).  Notice  that it  is not  possible to  have
$A_0\subset\{0,  1\}$, $A_{j_0}=\{1\}$  and $\cj^*=\{j_0\}$  together as
$p_{\theta,\alpha}$     is    non     trivial,     so    at     least
$g''_{A_0}(\theta)\in         (0,          +\infty         ]$         or
$ \E\left[(X_\theta-1)\ind_{\{X_\theta\not  \in A_0\}}\right]  >0$.  The
latter  implies that  $f'(\theta)>0$  as  $\mu_{\theta, \alpha}\leq  1$.
Since    $f(\theta)>0$     and    $H_\alpha(\theta)>0$,     we    deduce
from~\eqref{eq:mu'}  that  $\partial_\theta \mu_{\theta,  \alpha}>0$  on
$\{\theta\in  {I}_\alpha^\mathrm{f}\, \colon\,  \mu_{\theta, \alpha}\leq
1\}$.

\medskip

\textbf{Case $ 0\not\in A_0$.}
We now assume that $0\not\in A_0$. This implies that $\theta_{\min}=0$.
The function $f$ has a continuous extension at $0$ given by
$f(0)=q_1>0$, see~\eqref{eq:def-q1}. We distinguish according to $A_0$
being empty or reduced to $\{1\}$ and $A_0\cap \{1\}^c\neq \emptyset$.

\textbf{Sub-case $ A_0\subset \{1\}$.} We consider the sub-case $A_0\subset \{1\}$. The function $f$  is constant equal to $q_1$, and
$\mu_{\theta,    \alpha}=1-    q_1+     q_1    H_\alpha(\theta)$.
If  $\cj^{**}=\emptyset$ and
thus~\eqref{eq:patho} does not hold, then
we have $p_{\theta,
    \alpha}(k_j)=q_1 \alpha_j$ for $A_j=\{k_j\}$ and $j\in \cj^*$. Thus
Point~\ref{it:mu=cst} is obvious.

If
$\cj^{**}\neq\emptyset$, then  we deduce
from  Lemma~\ref{lem:hj-monotone}  that  the  functions  $H_\alpha$  and
$\theta \mapsto \mu_{\theta, \alpha}$ are increasing  on
$I^\mathrm{f}_\alpha$.

\medskip


\medskip

\textbf{Sub-case $0\not\in  A_0$ and $ A_0\cap  \{1\}^c\neq \emptyset$.}
We get  in particular  that $\theta_{\max}<+\infty  $.  We  introduce an
auxiliary  parameterized function  defined on  $I^\mathrm{f}_\alpha$ for
$\gamma>0$ by:
\begin{equation}
   \label{eq:def-mg}
  m_\gamma(\theta)= g_{A_0}'(\theta)
+  \gamma\, f(\theta) .
\end{equation}
Notice that $m_\gamma(0)=(1-q_1) + q_1 \gamma>0$.
On the one hand, direct computation  yields:
\begin{equation}
   \label{eq:mg}
   m_\gamma(\theta)
   =m_\gamma(0)  + \sum_{k\in A_0\cap\{1\}^c} (k-\gamma)
\theta^{k-1} p(k).
 \end{equation}
 We  deduce  that  if  $\gamma<  \min  (A_0\cap\{1\}^c)$,  and  in
 particular   if
 $\gamma<2$, then $m'_\gamma>0$ on  $I^\mathrm{f}_\alpha$. (Notice that
 $m'_\gamma$ is finite on $I^\mathrm{f}_\alpha$
 except possibly at $\theta_{\max}$ in the case
 where it
 belongs to $I^\mathrm{f}_\alpha$.)
 On the other
 hand,  if  there  exists $\theta_*\in  I^\mathrm{f}_\alpha$  such  that
 $m_\gamma(\theta_*)\leq   1$,  we   deduce  that   $\gamma\leq  1$   if
 $\theta_*=0$ and, from~\eqref{eq:def-mg} that:
\[
  \gamma \leq  \frac{1- g'_{A_0}(\theta_*)}{f(\theta_*)}
  = \frac{\theta_*- \theta_*
    g'_{A_0}(\theta_*)}{\theta_* - g_{A_0} (\theta_*)}<1
  \quad\text{if}\quad \theta_*>0.
\]
In conclusion:
\begin{equation}
  \label{eq:mg-concl}
  \exists \theta_*\in I^\mathrm{f}_\alpha\quad\text{s.t.}\quad
  m_\gamma(\theta_*)\leq 1
  \implies m'_\gamma>0.
\end{equation}

Now, we  go back to  the function $\theta\mapsto  \mu_{\theta, \alpha}$.
Assume   there  exits   $\theta_*\in   I^\mathrm{f}_\alpha$  such   that
$\mu_{\theta_*,  \alpha}\leq  1$.  Set  $\mu_*=\mu_{\theta_*,  \alpha}$,
$\gamma_*=H_\alpha(\theta_*)$  and $m_*=m_{\gamma_*}$.
By construction, we have that $m_*(\theta_*)=\mu_*\leq 1$ and, as
$H_\alpha$ is non-decreasing, for $\theta\in I^\mathrm{f}_\alpha$:
\[
  (\theta - \theta_*)  (  \mu_{\theta, \alpha} -  m_*(\theta))
= (\theta - \theta_*) (H_\alpha(\theta) - H_\theta(\theta_*)) f(\theta)
  \geq 0.
\]
This implies that $\partial_{\theta=\theta_*} \mu_{\theta, \alpha} \geq
m'_* (\theta_*)$, and thus is positive thanks to~\eqref{eq:mg-concl}. We
have obtained that  $\partial_\theta \mu_{\theta,  \alpha}>0$  on
$\{\theta\in  {I}_\alpha^\mathrm{f}\, \colon\,  \mu_{\theta, \alpha}\leq
1\}$.

\medskip

In conclusion, if $A_0\subset \{1\}$ and $\cj^{**}=\emptyset$, then $\theta\mapsto
\mu_{\theta,\alpha}$ is constant; if $A_0\cap\{1\}^c\neq \emptyset$ or
$\cj^{**} \neq \emptyset$,
we have   $\partial_\theta \mu_{\theta,  \alpha}>0$  on
$\{\theta\in  {I}_\alpha^\mathrm{f}\, \colon\,  \mu_{\theta, \alpha}\leq
1\}$,  this  set
 is either empty, or equal to $I_\alpha^\mathrm{f}$ or of the form
$I_\alpha^\mathrm{f}\cap [0, \theta']$, and it contains at most one element  $\theta$ such that $\mu_{\theta, \alpha}=1$.
\end{proof}

Recall  that $\rho_\cj$  is the  radius of  convergence of  the function
$\sum_{j\in\cj} g_{A_j}$ and $I_\alpha$  is an interval of $[0,+\infty]$
defined in~\eqref{eq:def-I-alpha}. We have the following theorem.
Recall  that assuming~\eqref{eq:patho}  is not very restrictive  as
otherwise  the map
$\theta\mapsto p_{\theta, \alpha}$  is constant, see
Proposition~\ref{prop:uniq}.

\begin{theo}[Generic distribution]\label{thm:generic}
   Let     $p$    be     a    probability     distribution    on     $\N$
  satisfying~\eqref{assumption-p}. Let $\ca=(A_j)_{ j\in \II{1, J}}$, with
  $J\in \N^*$, be pairwise disjoint non-empty subsets of $\supp(p)$.
    Let $\alpha\in\Delta_J^*$ and assume that~\eqref{eq:patho} holds.
    The distribution  $p$ is not generic
  for $\ca$ in  the direction  $\alpha$  if and  only if
  one of the  following conditions holds.
  \begin{enumerate}[(i)]
   \item\label{item:0inA0-v2}    $\rho_{\cj}< \infty$,
 $g_{A_0}'(\rho_{\cj}) \leq 1$,   $g_\cj(\rho_{\cj}) <  \infty$ and:
\begin{equation}\label{eq:g-rho-less-infty}
H_\alpha (\rho_{\cj})
< \rho_{\cj}\, \frac{1 - g_{A_0}'(\rho_{\cj})}{\rho_{\cj} - g_{A_0}(\rho_{\cj})}\cdot
\end{equation}
\item \label{item:0niA0-v2} $0\not \in A_0$ and
 $\sum_{j\in\cj^*}
     \alpha_j \min A_j > 1$.
   \item \label{item:0niA0-2-v2}
     $A_0\subset \{1\}$,    $\rho_{\cj} = \infty$  and  $\sum_{j\in\cj^*}
     \alpha_j \sup A_j < 1$.
\end{enumerate}
\end{theo}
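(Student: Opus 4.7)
The plan is to reduce genericity to a question about the range of the continuous function $\theta\mapsto\mu_{\theta,\alpha}$ on the interval $I_\alpha$. By Definition~\ref{defi:on-p}~\ref{defi:generic} combined with the parametrization in Lemma~\ref{lem:p-qa}, $p$ is generic for $\ca$ in direction $\alpha$ if and only if there exists $\theta\in I_\alpha$ with $\mu_{\theta,\alpha}=1$. Corollary~\ref{cor:mu-cont} gives continuity of $\mu_{\cdot,\alpha}$ on $I_\alpha$, and Proposition~\ref{prop:uniq} (which requires exactly the hypothesis~\eqref{eq:patho}) gives strict monotonicity on the sublevel set $\{\theta\in I_\alpha:\mu_{\theta,\alpha}\le 1\}$. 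Consequently, non-genericity must come from one of only two obstructions: either $\mu_{\theta,\alpha}>1$ for all $\theta\in I_\alpha$ (infimum obstruction), or $\mu_{\theta,\alpha}<1$ for all $\theta\in I_\alpha$ (supremum obstruction).

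I would then split according to whether $0\in A_0$ or not, so as to directly invoke Lemmas~\ref{lem:0-A-0-sub-sup} and~\ref{lem:0-A-1-sub-sup}. When $0\in A_0$, Lemma~\ref{lem:0-A-0-sub-sup}~\ref{item:sub-0-in-min} rules out the infimum obstruction (since $\mu_{\theta,\alpha}\to g'_{A_0}(\theta_{\min})<1$ as $\theta\downarrow\theta_{\min}$), and Lemma~\ref{lem:0-A-0-sub-sup}~\ref{item:sub-0-in-max} rules out the supremum obstruction whenever $\rho_\cj=+\infty$. Hence in this case non-genericity forces $\rho_\cj<+\infty$, which lands us in condition~\ref{item:0inA0-v2}. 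When $0\notin A_0$, Lemma~\ref{lem:0-A-1-sub-sup}~\ref{item:sub-0-out-min} shows the infimum obstruction is equivalent to $\sum_{j\in\cj^*}\alpha_j\min A_j>1$, which is condition~\ref{item:0niA0-v2}. In the absence of that obstruction and when $\rho_\cj=+\infty$, Lemma~\ref{lem:0-A-1-sub-sup}~\ref{item:sub-0-out-max} identifies the supremum obstruction with condition~\ref{item:0niA0-2-v2}. The remaining possibility is $\rho_\cj<+\infty$, which again falls into case~\ref{item:0inA0-v2}.

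The core computation is therefore to show, uniformly in both cases, that when $\rho_\cj<+\infty$ the supremum obstruction is equivalent to~\ref{item:0inA0-v2}. Starting from the identity
\[
\mu_{\theta,\alpha}=g_{A_0}'(\theta)+\frac{\theta-g_{A_0}(\theta)}{\theta}\,H_\alpha(\theta)
\]
from~\eqref{mu-expression}, together with the monotonicity of $H_\alpha$ and $g'_{A_0}$, the supremum equals $\lim_{\theta\uparrow\theta_{\max}}\mu_{\theta,\alpha}$. If $g_\cj(\rho_\cj)=+\infty$, then $H_\alpha(\rho_\cj)=+\infty$ by Lemma~\ref{lem:hj-monotone} and the limit is $+\infty$, so $p$ is generic; if $g'_{A_0}(\rho_\cj)>1$, then the analysis of $\theta_{\max}$ via~\eqref{eq:q-max} forces $\mu$ to reach a value exceeding $1$ and $p$ is generic. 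In the remaining regime $\theta_{\max}=\rho_\cj\in I_\alpha$ with $\mu_{\rho_\cj,\alpha}$ finite, and a direct rearrangement (clearing the positive denominator $\rho_\cj-g_{A_0}(\rho_\cj)$) shows that $\mu_{\rho_\cj,\alpha}<1$ is exactly the inequality~\eqref{eq:g-rho-less-infty}.

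The main obstacle will be the bookkeeping at the upper endpoint $\rho_\cj$: one has to carefully sort out whether $\theta_{\max}$ coincides with $\rho_\cj$ or with the second root of $g_{A_0}(\theta)=\theta$, whether $\rho_\cj$ itself lies in $I_\alpha$ (governed by finiteness of $g_\cj(\rho_\cj)$), and whether the supremum of $\mu$ is attained or only approached. A secondary check is verifying that the three listed conditions~\ref{item:0inA0-v2}--\ref{item:0niA0-2-v2} are internally consistent with the split on $0\in A_0$ versus $0\notin A_0$, so that no redundancy is hidden in the statement and the resulting characterization is indeed a clean disjunction.
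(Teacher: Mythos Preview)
Your proposal is correct and follows essentially the same route as the paper: both arguments reduce genericity to $1$ lying in the range of $\theta\mapsto\mu_{\theta,\alpha}$, split on $0\in A_0$ versus $0\notin A_0$, invoke Lemmas~\ref{lem:0-A-0-sub-sup} and~\ref{lem:0-A-1-sub-sup} for the infimum and for the $\rho_\cj=\infty$ supremum, and then carry out the same three-subcase analysis at the right endpoint when $\rho_\cj<\infty$ (namely $g'_{A_0}(\rho_\cj)>1$; $g'_{A_0}(\rho_\cj)\le 1$ with $g_\cj(\rho_\cj)=\infty$; and $g'_{A_0}(\rho_\cj)\le 1$ with $g_\cj(\rho_\cj)<\infty$).

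One point to tighten: you write that ``the supremum equals $\lim_{\theta\uparrow\theta_{\max}}\mu_{\theta,\alpha}$'' because of the monotonicity of $H_\alpha$ and $g'_{A_0}$. That justification is not valid, since $\theta\mapsto\mu_{\theta,\alpha}$ need not be monotone on $I_\alpha$ (see Remark~\ref{rem:mu-monotone}); the factor $(\theta-g_{A_0}(\theta))/\theta$ can decrease. The paper instead argues as follows when condition~\ref{item:0inA0-v2} holds: one first checks $\theta_{\max}=\rho_\cj\in I_\alpha$ and $\mu_{\rho_\cj,\alpha}<1$, and \emph{then} invokes Proposition~\ref{prop:uniq} to conclude that $\mu_{\theta,\alpha}<1$ for all $\theta\in I_\alpha$ (since the function is strictly increasing on its sublevel set $\{\mu\le 1\}$, which therefore coincides with all of $I_\alpha^{\mathrm f}$). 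You already have Proposition~\ref{prop:uniq} in hand, so this is a one-line fix; just be sure to phrase the supremum-obstruction direction using that proposition rather than a spurious global monotonicity of $\mu$.
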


\begin{rem}[Direction and generic distribution]
  \label{rem:admi-a}
 A probability distribution might not be generic in all the directions;
 and it may happen that it is generic only in one direction.
\begin{enumerate}[(a)]

   \item \label{item:non-gene-fin} Suppose $\supp(p)=\II{0, 3}$ and
  $\ca=(\{0\},\{2,3\})$ and thus $A_0=\{1\}$.
Consider the direction  $\alpha=(\alpha_1, \alpha_2)$. 
\begin{itemize}
    \item 
The distribution $p$ is generic for $\ca$
in the direction $\alpha$ 
if and only if  $\alpha_2\in [1/3, 1/2]$. 

\item 
If $\alpha_2\in (1/2,1)$, then  Point~\ref{item:0niA0-v2} of
Theorem~\ref{thm:generic} holds since 
$0\not \in A_0$ and  $  \sum_{j=1}^2
\alpha_j\min(A_j)=2\alpha_2>1$; and in this case  all the $p_{\theta, \alpha}$ are sub-critical. 

\item If $\alpha_2\in (0, 1/3)$, then    Point~\ref{item:0niA0-2-v2} of
Theorem~\ref{thm:generic} holds since 
$A_0\subset\{1\}$ and   $  \sum_{j=1}^2
\alpha_j\sup(A_j)=3\alpha_2<1$;  and in this case  all the $p_{\theta, \alpha}$ are super-critical. 

\end{itemize}

\item Suppose  $\supp(p)=\{0, 2\}$,  and
  $\ca=(\{0\}, \{2\})$. Notice that~\eqref{eq:patho}
  does not hold.  In  this example all
  probability   distribution  $p'$   such  that   $\supp(p')=\supp(p)  $   is
  $(p,   \ca)$-compatible.   The   direction   of  $p'$   is  given   by
  $(p'(0), p'(2))$. We  recover that for all  directions in $\Delta_2^*$
  there     exists      a     $(p,      \ca)$-compatible     probability
  distribution.  However, the  probability  distribution $p$  is generic  for
  $\ca$ only in the direction $(1/2, 1/2)$.
\end{enumerate}
\end{rem}

\begin{proof}
We simply  write $M_\alpha$ and  $m_\alpha$ for the  respective supremum
and                              infimum                              of
$\{\mu_{\theta, \alpha}\,\colon\, \theta\in I_\alpha\}$.
Let $\neg$ be the usual logical negation.
\medskip

We first prove that:
\begin{equation}
   \label{eq:non-i}
\rho_\cj<+\infty \quad\text{and}\quad
\neg\ref{item:0inA0-v2} \, \implies \,   M_\alpha\geq 1
  .
\end{equation}
If $\rho_{\cj}  < \infty$ and  $g_{A_0}'(\rho_{\cj}) > 1$, then  we have
$A_0\cap \{0, 1\}^c \neq \emptyset$,  the function $g_{A_0}$ is strictly
convex.  So there
exists   $   \theta^*   \in   (\theta_{\min},   \rho_\cj)$   such   that
$g_{A_0}(\theta^*)<\theta^*$  and  $g_{A_0}'(\theta^*)=1$.  Notice  that
$\theta^*\in  I_\alpha$  and  use~\eqref{mu-expression} to  deduce  that
$\mu_{\theta^*, \alpha}\geq 1$.

If   $\rho_{\cj}   <   \infty$,  $g_{A_0}'(\rho_{\cj})   \leq   1$   and
$   g_\cj(\rho_{\cj})  =   \infty$,  then   we  have   $\rho_\cj>1$  and
$\rho_{\cj}\not\in                     I_\alpha$                     (by
Lemma~\ref{lem:p-qa}~\ref{item:p-alpha-q}).                        Since
$g_{A_0}'(\rho_{\cj})      \leq       1$,      we       deduce      that
$g_{A_0}(\rho_\cj)<\rho_\cj$,   and   thus   $\theta_{\max}=   \rho_\cj$
by~\eqref{eq:q-max}.   Since $  g_\cj(\rho_{\cj}) =  \infty$, we  deduce
there    exists    $j\in\cj^*$    such    that    $\rho_{A_j}=\rho_\cj$,
$g_{A_j}(\rho_\cj)=\infty$ as well as $\sup A_j=+\infty $.  This implies
by  Lemma~\ref{lem:hj-monotone} that  $H_\alpha(\rho_\cj)=+\infty $  and
hence                            by                           continuity
that
$\lim_{\theta\uparrow \theta_{\max}} \mu_{\theta,\alpha}=+\infty $.

If    $\rho_{\cj}< \infty$,
 $g_{A_0}'(\rho_{\cj}) \leq 1$,   $g_\cj(\rho_{\cj}) <  \infty$ and:
\begin{equation}\label{eq:g-rho-less-infty-v2}
H_\alpha (\rho_{\cj})
\geq \rho_{\cj}\, \frac{1 - g_{A_0}'(\rho_{\cj})}{\rho_{\cj} -
  g_{A_0}(\rho_{\cj})},
\end{equation}
then     we    have     $g_{A_0}(\rho_\cj)<    \rho_\cj$     and    thus
$\theta_{\max}=\rho_\cj$         belongs          to         $I_\alpha$.
Use~\eqref{mu-expression}  and~\eqref{eq:g-rho-less-infty-v2} to  deduce
that $\mu_{\theta_{\max}, \alpha}\geq 1$.

This proves that~\eqref{eq:non-i} holds.

\medskip

We    now    consider    the     case    $0\in    A_0$.     Thanks    to
Lemma~\ref{lem:0-A-0-sub-sup}~\ref{item:sub-0-in-min},      we      have
$m_\alpha< 1$.   So $p$ is generic  in the direction $\alpha$  if and
only  if  $M_\alpha\geq  1$. If  Point~\ref{item:0inA0-v2}  holds,  then
$\theta_{\max}=\rho_\cj$   belongs   to   $I_\alpha$,   and   by~\eqref{mu-expression}
and~\eqref{eq:g-rho-less-infty} we get that $\mu_{\theta_{\max}, \alpha} <1$,
which  by Proposition~\ref{prop:uniq} implies that  $M_\alpha< 1$; thus $p$ is not
generic    in    the    direction    $\alpha$.     Now    assume    that
Point~\ref{item:0inA0-v2}  does not  hold.  If  $\rho_\cj<+\infty$, then
use~\eqref{eq:non-i} to  deduce that  $M_\alpha\geq 1$  and that  $p$ is
generic  in the  direction  $\alpha$.  If  $\rho_\cj=+\infty$, then  use
Lemma~\ref{lem:0-A-0-sub-sup}~\ref{item:sub-0-in-max}   to    get   that
$M_\alpha\geq  1$,  and  thus  $p$  is also  generic  in  the  direction
$\alpha$.  This proves the theorem in the case $0\in A_0$.

\medskip   We   now   consider   the   case   $0\not   \in   A_0$.    If
Point~\ref{item:0niA0-v2}       holds,        we       deduce       from
Lemma~\ref{lem:0-A-1-sub-sup}~\ref{item:sub-0-out-min} that $m_\alpha>1$
and thus  $p$ is not  generic in the  direction $\alpha$.

We now assume that $\sum_{j\in\cj^*}  \alpha_j \min A_j \leq 1$.  Thanks
to  Lemma~\ref{lem:0-A-1-sub-sup}~\ref{item:sub-0-out-min}, we  get that
$m_\alpha\leq 1$.   So $p$ is generic  in the direction $\alpha$  if and
only  if  $M_\alpha\geq  1$.  If  $\rho_\cj=\infty  $,  we  deduce  from
Lemma~\ref{lem:0-A-1-sub-sup}~\ref{item:sub-0-out-max}              that
$A_0\subset  \{1\}$ and  $\sum_{j\in\cj^*} \alpha_j  \sup A_j  < 1$  are
equivalent to $M_\alpha<1$, that is, $p$ is not generic in the direction
$\alpha$.   We   eventually   assume  that   $\rho_\cj<\infty   $.    If
Point~\ref{item:0inA0-v2}     holds,
then $\theta_{\max}=\rho_\cj$   belongs   to   $I_\alpha$,
  $\mu_{\theta_{\max}, \alpha} <1$, and     $M_\alpha<    1$     by
Proposition~\ref{prop:uniq},  and  thus  $p$   is  not  generic  in  the
direction $\alpha$.   If Point~\ref{item:0inA0-v2}  does not  hold, then
use~\eqref{eq:non-i} to  deduce that  $M_\alpha\geq 1$  and that  $p$ is
generic in the direction $\alpha$.
\end{proof}

\section{Local limit of large Galton-Watson trees}
\label{sec:main}

We give the  main theorem, see Theorem~\ref{thm:main-thm},  on the local
limit of conditioned BGW tree in the next section. Its proof relies on a
transformation of BGW trees from Rizzolo, see Section~\ref{sec:rizzolo},
and a direct application of local limit theorems for multi-type BGW trees
from~\cite{adg18}, see Section~\ref{sec:proof-main}.

\subsection{Main result}
 Let  $p$ be  a probability  distribution on  $\N$
satisfying~\eqref{assumption-p},   and
let
$\ca=(A_j)_{j\in \II{1,  J}}$, with  $J\in \N^*$, be  pairwise disjoint
non-empty  subsets  of  $\supp(p)$.  Let  $\alpha\in  \Delta^*_J$  be  a
possible direction, see~\eqref{eq:possible-dir}.
We assume the distribution  $p$ is
generic   for  $\ca$   in   the  direction   $\alpha$  (recall   Theorem
\ref{thm:generic}). Thus, there exists a  (unique) $\qc\in I_\alpha$ such that:
\begin{equation}
   \label{eq:def-palpha}
  p_{\alpha}:=p_{\qc, \alpha}
  \quad\text{is critical.}\quad
\end{equation}

Recall  $\ct_q$ denotes  a  BGW  tree with  offspring  distribution $q$  and
$\ct_q^*$ the  corresponding Kesten tree when $\mu(q)\leq 1$.
Recall $|\bn|$ is the $L^1$-norm of $\bn \in \N^J$.

\begin{lem}
  \label{lem:a-admiss}
If  $p$ is generic for $\ca$ in the direction $\alpha\in \Delta_J^*$,
then there exists a sequence $(\bn^{(m)})_{ m\in \N}$ in $\N^J$
such that:
\begin{equation}\label{eq:cond-n_m2}
\P(L_\ca(\ct_p)=\bn^{(m)})>0,
\end{equation}
\begin{equation}
   \label{eq:cond-n_m}
    \lim_{m\rightarrow \infty }
  |\bn^{(m)}|=\infty, \quad \lim_{m\rightarrow \infty }
  \frac{\bn^{(m)}}{ |\bn^{(m)}|}=\alpha,
\end{equation}
and for all $m\in  \N$, $j\in \II{1, J}$, with $\bn^{(m)}=(n_1^{(m)},
\ldots, n_J^{(m)})$:
\begin{equation}
  \label{eq:cond-zero}
  \alpha_j=0 \implies   n^{(m)}_j= 0.
\end{equation}
\end{lem}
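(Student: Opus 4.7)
The plan is to work with the critical $(p,\ca)$-compatible distribution $p_\alpha=p_{\qc,\alpha}$ (which exists by genericness) and exploit a law of large numbers for size-conditioned critical BGW trees.

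By Definition~\ref{defi:on-p}\ref{defi:equiv}, compatibility gives $\supp\bigl(L_\ca(\ct_{p_\alpha})\bigr)=\supp\bigl(L_\ca(\ct_p)\bigr)$, so it suffices to find $(\bn^{(m)})$ realized with positive probability by $\ct_{p_\alpha}$. Moreover, Lemma~\ref{lem:p-qa} shows $p_\alpha(A_j)=0$ whenever $\alpha_j=0$, so $\supp(p_\alpha)\subset A_0\cup \bigcup_{j\in\cj^*_\alpha} A_j$. Consequently every finite tree $\bt$ realizable by $\ct_{p_\alpha}$ satisfies $L_{A_j}(\bt)=0$ for $\alpha_j=0$, so~\eqref{eq:cond-zero} is automatic for any $\bn$ in $\supp(L_\ca(\ct_{p_\alpha}))$.

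For the main sequence: since $p_\alpha$ is critical and non-trivial, $\ct_{p_\alpha}$ is a.s.\ finite and $\sharp\ct_{p_\alpha}$ has unbounded support. The conditional law of large numbers for vertex out-degrees in critical BGW trees (as in Janson~\cite{janson16} and Th\'evenin~\cite{thevenin20}, cited in the introduction) yields, for $n\to\infty$ along the support of $\sharp\ct_{p_\alpha}$,
\[
\frac{L_\ca(\ct_{p_\alpha})}{n}\xrightarrow{\P} p_\alpha(\ca)=\bigl(1-p_\alpha(A_0)\bigr)\,\alpha \quad\text{conditionally on}\quad \sharp\ct_{p_\alpha}=n,
\]
where $1-p_\alpha(A_0)>0$ because $\alpha\in\Delta_J^*$. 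By a diagonal extraction one picks $n_m\nearrow \infty$ in $\supp(\sharp\ct_{p_\alpha})$ and, for each $m$, a tree $\bt^{(m)}\in\T_0$ with $\P(\ct_{p_\alpha}=\bt^{(m)})>0$, $\sharp\bt^{(m)}=n_m$, and $\|L_\ca(\bt^{(m)})/n_m- (1-p_\alpha(A_0))\,\alpha\|<1/m$. Setting $\bn^{(m)}:=L_\ca(\bt^{(m)})$, one gets $|\bn^{(m)}|/n_m\to 1-p_\alpha(A_0)>0$, so $|\bn^{(m)}|\to\infty$ and $\bn^{(m)}/|\bn^{(m)}|\to \alpha$. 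Together with the first paragraph, this yields~\eqref{eq:cond-n_m2}, \eqref{eq:cond-n_m} and~\eqref{eq:cond-zero}.

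The main subtlety is applying the conditional LLN when $p_\alpha$ may have infinite variance or when $\sharp\ct_{p_\alpha}$ is supported on a proper arithmetic subprogression; however only convergence in probability along some subsequence in the support is needed, which is a by now standard fact on empirical degree statistics of large critical BGW trees.
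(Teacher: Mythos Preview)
Your approach is essentially the same as the paper's: reduce to $p_\alpha$, invoke the law of large numbers for degree statistics in size-conditioned critical BGW trees (the paper cites \cite[Theorem~7.11]{j12} rather than \cite{janson16,thevenin20}, and this result requires no moment assumption), and extract a realizing tree for each large $m$. One small overstatement: compatibility only gives the inclusion $\supp\bigl(L_\ca(\ct_{p_\alpha})\bigr)\subset\supp\bigl(L_\ca(\ct_p)\bigr)$, not equality (indeed $\alpha_j=0$ forces $L_{A_j}(\ct_{p_\alpha})=0$ while $L_{A_j}(\ct_p)$ need not vanish), but only the inclusion is needed for your argument.
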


\begin{proof}
  Since    $p$    is    generic    for   $\ca$    in    the    direction
  $\alpha\in \Delta_J^*$, there exists  a critical $(p, \ca)$-compatible
  distribution   $p_\alpha$   with   direction   $\alpha$.    Thus,   by
  Definition~\ref{defi:on-p}   of   $(p,  \ca)$-compatible   probability
  distribution, it is  enough to find a sequence  $(\bn^{(m)})_{ m\in \N}$
  such  that~\eqref{eq:cond-n_m2}-\eqref{eq:cond-zero}  hold, with  $\ct_p$
  replaced by $\ct_{p_{\alpha}}$.

  Let   $\ct_{p_\alpha}^{(n)}$   be  distributed   as   $\ct_{p_\alpha}$
  conditioned to have  $n$ vertices.  Notice that for  all finite $M>0$,
  there exists $n>M$  such that the probability of  $\ct_{p_ \alpha}$ to
  have $n$ vertices is positive. Recall $L_k(\bt)$ denotes the number of
  vertices  in $\bt$  with out-degree  $k$.  According  to \cite[Theorem
  7.11]{j12},              along               the              sequence
  $\{n\in  \N\,  \colon\,  \P(\sharp \ct_{p_ \alpha}=n)>0\}$,  the  following
  convergences hold in probability for all $k\in \N$:
 \[
n^{-1} L_k(\ct_{p_\alpha}^{(n)})
\, \xrightarrow[n\to\infty]{\P} \,
p_{\alpha}(k).
\]
Since  $(n^{-1} L_{k}(\ct_{p_\alpha}^{(n)}))_{k\in \N}$ is a random
 probability distribution on $\N$, we also get that:
\begin{equation}\label{eq:conv-la}
\max_{j\in\cj}\val{n^{-1} L_{A_j}(\ct_{p_\alpha}^{(n)})
- p_{\alpha}(A_j)}\xrightarrow[n\to\infty]{\P} 0.
\end{equation}

 \medskip

 In  particular, for $m\in  \N^*$ large enough, we deduce that there exists
 a  tree $\bt^{[m]}$  such  that    $\sharp \bt^{[m]}\geq    m$;
$\P(\ct_{p_\alpha} = \bt^{[m]})>0$; and, with $\bn^{(m)}=(n_1^{(m)}, \ldots,
n_J^{(m)})= L_\ca(\bt^{[m]})$,
$n^{(m)}_j=0$ if $\alpha_j=0$ and for all $j\in \cj^*$:
\[
   \val{ \frac{n^{(m)}_j }{|\bn^{(m)}|} - \frac{p_{ \alpha}
      (A_j)}{1-p_{\alpha} (A_0)} }
  \leq  \frac{1}{m}\cdot
\]
Recall also that $\P(\ct_{p_\alpha}=\bt)>0$ implies that
$\P(\ct_p=\bt)>0$.
This and the fact $\alpha$ is the direction
 of $p_{\alpha}$, that is $p_\alpha(A_j)/(1- p_\alpha(A_0))=\alpha_j$,  end the proof.
\end{proof}

Recall:
\[
  \cj^*=\{j\in \II{1, J}\, \colon\, \alpha_j>0\}
  \quad\text{and}\quad
  \cj=\{0\}
  \cup \cj^*.
\]
For $A\subset \N$, we write  $A-1=\{a-1\,
\colon\, a\in A\}$ and $A-A=\{a-b\, \colon\, a,b\in A\}$. We define:
\begin{equation}
   \label{eq:Gamma-a}
  \Gamma_\alpha= \bigcup_{j\in \cj}\, A'_j,
\quad\text{where}\quad
A'_0=A_0 -1
\quad\text{and}\quad
A'_j=A_j-A_j
\quad\text{for}\quad j\in \cj^*.
\end{equation}

\begin{defi}[Aperiodicity]
  \label{defi:period}
Let  $p$ be  generic for $\ca$ in the direction $\alpha\in \Delta_J^*$.
The probability distribution $p$ is aperiodic for $\ca$ in the direction
$\alpha$ if $\qc\in (0, +\infty )$ and the smallest subgroup of
$\Z$ that contains $\Gamma_\alpha$ is $\Z$.
\end{defi}

\begin{rem}[On aperiodicity]
  \label{rem:period0} $ $
  \begin{enumerate}[(a)]
  \item\label{it:period-0A0}
    If $0\in A_0$,  then $-1\in A'_0$, and thus the
  distribution  $p$ is
  aperiodic for $\ca$ in the direction~$\alpha$.
\item \label{it:patho-singleton}
  If $p$ is aperiodic for $\ca$ in the direction
$\alpha$, then~\eqref{eq:patho} holds.

\item\label{it:period-0infty}   Looking  carefully   at  the   proof  of
  Lemma~\ref{p-ca-critical}~\ref{it:period}, it would be more natural to
  consider      $\gamma'_\alpha$     defined      as     $\Gamma_\alpha$
  in~\eqref{eq:Gamma-a}     but      with     $A_j$      replaced     by
  $A_j\cap \supp (p_\alpha)$.  For $\qc\in (0, +\infty )$ this yields no
  modification   as    $\Gamma'_\alpha=\Gamma_\alpha$.    However,   for
  $\qc\in     \{0,\infty\}$      (which     is     ruled      out     in
  Definition~\ref{defi:period}), the set  $\Gamma'_\alpha$ is reduced to
  $\{0\}$  (as  $A_j\cap  \supp(p_\alpha)$  is  either  a  singleton  of
  empty). So, with the more natural definition that $p$ is aperiodic for
  $\ca$ in the direction $\alpha$ if  the smallest subgroup of $\Z$ that
  contains   $\Gamma'_\alpha$    is   $\Z$,    then   we    still   have
  $\qc\in (0,\infty)$.
  \end{enumerate}
\end{rem}

We now state the main result. Notice we don't assume that $p$ has a
finite mean. 
\begin{theo}[Local limit of conditioned BGW tree]
  \label{thm:main-thm}
  Let    $    p$    be    a    probability    distribution    on    $\N$
  satisfying~\eqref{assumption-p}.  Let $\ca=(A_j)_{j\in \II{1, J}}$,
    with $J\in \N^*$,   be a  family of  pairwise
  disjoint     non-empty    subsets     of     $\supp(p)$    and
  consider the direction
  $\alpha \in \Delta_J^*$.
  We assume that $p$ is generic and aperiodic  for $\ca$ in
  the direction $\alpha$ (and thus $\qc\in (0, +\infty )$).
  If  $(\bn^{(m)})_{ m\in  \N}$  is  a  sequence in  $\N^J$  satisfying
\eqref{eq:cond-n_m2}-\eqref{eq:cond-zero}, then we have:
  \[
\dist(\ct_p\,|\, L_{\ca}(\ct_p)=\bn^{(m)})
\xrightarrow[m\to\infty]{} \dist(\ct^*_{p_{\alpha}}).
\]
\end{theo}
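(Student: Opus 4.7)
The plan is to follow the two-step strategy outlined by the authors in the introduction: first reduce to a critical BGW tree via $(p,\ca)$-compatibility, then translate the problem into a multi-type BGW tree question via Rizzolo's encoding and quote the multi-type local limit from \cite{ad14b}.

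\textbf{Step 1: Reduction to the critical offspring distribution $p_\alpha$.} By Theorem~\ref{th:p,a-compatibility} and the definition of genericity, the probability distribution $p_\alpha=p_{\qc,\alpha}$ is a critical $(p,\ca)$-compatible distribution. Using the compatibility relation~\eqref{equi-cond}, for every $\bn\in\N^J$ with $\P(L_\ca(\ct_p)=\bn)>0$ we have
\[
\dist(\ct_p\mid L_\ca(\ct_p)=\bn)=\dist(\ct_{p_\alpha}\mid L_\ca(\ct_{p_\alpha})=\bn).
\]
So it suffices to establish the local convergence of $\ct_{p_\alpha}$ conditioned on $L_\ca(\ct_{p_\alpha})=\bn^{(m)}$ toward $\ct^*_{p_\alpha}$. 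From now on we may assume that $p$ is itself critical in the direction $\alpha$, so $\mu(p)=1$.

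\textbf{Step 2: Rizzolo's transformation to eliminate $A_0$.} The set $A_0$ may be non-empty, so the conditioning does not track all vertices. I would apply the transformation of Rizzolo \cite{r15}, which, to a BGW tree with offspring distribution $p$, associates a multi-type BGW forest whose types are indexed by $\II{1,J}$: a vertex of type $j$ corresponds to a vertex of the original tree with out-degree in $A_j$, and the $A_0$-vertices are absorbed into the edges between retained vertices. The conditioning $\{L_\ca(\ct_p)=\bn\}$ then reads exactly as a conditioning on the type-population sizes $(\sharp\text{type $j$})_{j\in\II{1,J}}=\bn$ of the multi-type tree. The fringe/ancestor structure of the original tree is completely recoverable from the multi-type object together with the i.i.d. "$A_0$-chains" attached between typed vertices, so a local limit result for the multi-type tree translates into the desired local limit for $\ct_{p_\alpha}$.

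\textbf{Step 3: Apply the multi-type BGW local limit from \cite{ad14b}.} The transformed tree is a critical multi-type BGW tree (criticality of $p_\alpha$ corresponds to spectral radius one of the mean matrix, by the same computation as in~\eqref{mu-expression}) and the direction $\alpha$ is precisely the normalized left eigenvector for the dominant eigenvalue (this is the content of Lemma~\ref{lem:p-qa} combined with the fact that $\alpha_j=p_\alpha(A_j)/(1-p_\alpha(A_0))$). The aperiodicity hypothesis~\ref{it:Hlast}, via the set $\Gamma_\alpha$ in~\eqref{eq:Gamma-a}, is exactly what is required to apply the lattice local limit theorem underlying the multi-type result of \cite{ad14b}: the $A_0-1$ contribution corresponds to step sizes along $A_0$-chains and the $A_j-A_j$ contributions correspond to differences of type-$j$ offspring counts. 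Conditions~\eqref{eq:cond-n_m2}--\eqref{eq:cond-zero} (together with Lemma~\ref{lem:a-admiss}) ensure that the sequence $\bn^{(m)}$ is an admissible sequence for that theorem, with proportions converging strictly to $\alpha$ in the coordinates where $\alpha$ is supported (and vanishing elsewhere, which is crucial: see Remark~\ref{rem:tbd}). The conclusion of \cite{ad14b} gives a multi-type Kesten tree as the local limit.

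\textbf{Step 4: Identifying the limit.} Finally, reversing Rizzolo's encoding by re-inserting the i.i.d. $A_0$-chains onto the multi-type Kesten tree yields a single-type tree whose distribution satisfies the characterization~\eqref{eq:loi-Tp*} of Kesten's tree associated with $p_\alpha$: on the finite part it has the law of $\ct_{p_\alpha}$ grafted onto a leaf, and it has an infinite spine along which the size-biased offspring distribution governs the number of children. The main obstacle I anticipate is carrying out Step~2 cleanly when $0\in A_0$ (so the tree can end on an $A_0$-chain without a typed leaf), and ensuring that the aperiodicity condition built into Definition~\ref{defi:period} matches exactly the lattice condition needed by the multi-type local limit theorem; the care taken in Remark~\ref{rem:period0} around $\Gamma_\alpha$ versus $\Gamma'_\alpha$ suggests this is precisely the delicate point.
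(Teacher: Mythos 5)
Your overall strategy (compatibility reduction to $p_\alpha$, Rizzolo's transformation, then a multi-type result) matches the paper's, but there are two genuine gaps. First, and most importantly, you propose to quote the multi-type local limit theorem off the shelf; however its hypotheses fail in the case~\eqref{eq:delicat}, namely $A_0\subset\{1\}$ and $A_{j_0}=\{0\}$ for some $j_0\in\cj^*$ (type $j_0$ individuals a.s.\ have no children, so the mean matrix has a zero row and is \emph{not primitive}, violating assumption (H1) of~\cite{adg18}). This is not a marginal case: it contains the motivating application of conditioning on total progeny and number of leaves ($A_1=\N^*\cap\supp(p)$, $A_2=\{0\}$, $A_0=\emptyset$), and the paper has to devote Section~\ref{sec:appendix} to reworking the argument of~\cite{adg18} without primitivity (the eigenvalue structure changes: the right eigenvector has a zero entry). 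You identify the delicate point as ``$0\in A_0$'', but that is in fact the easy case (aperiodicity is automatic by Remark~\ref{rem:period0}~\ref{it:period-0A0} and the mean matrix is primitive by Lemma~\ref{p-ca-critical}~\ref{it:primitiv}); the delicate case you would need to handle separately is the one above.

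Second, your Step~4 (``reversing Rizzolo's encoding'' by re-inserting i.i.d.\ $A_0$-chains onto the multi-type Kesten tree and checking the result is $\ct^*_{p_\alpha}$) is asserted rather than proved, and it is not routine: the transformation of Section~\ref{sec:rizzolo} forgets the $A_0$-structure, so it is not invertible tree-by-tree, and transferring local convergence back through it would require a genuine argument about conditional laws and truncations. The paper avoids this entirely: it never takes a local limit of the conditioned multi-type tree. Instead it writes, via~\eqref{eq:loi-Tp*},
\[
\P(\ct_{\alpha}\in \T(\bt,x)\,|\,  L_\ca(\ct_{\alpha})=\bn^{(m)})
=\P(\ct_{\alpha}^*\in \T(\bt,x))
\, \frac{\P(\sharp \ct_{{\alpha}}^\ca=\bn^{(m)*}-L_{\ca^*}(\bt)+\bb^*)}
{\P(\sharp \ct_{{\alpha}}^\ca=\bn^{(m)*})},
\]
and uses Rizzolo's map only to identify the population-size event $\{L_{\ca^*}(\ct_\alpha)=\cdot\}$ with $\{\sharp\ct_\alpha^\ca=\cdot\}$, so that the strong ratio theorem (Equation (19) of~\cite{adg18}, together with Lemma~\ref{p-ca-critical} for criticality, the eigenvector identification, and aperiodicity) gives that the ratio tends to $1$; the Kesten tree then appears directly, with no inversion needed. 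To repair your proposal you would either have to justify the inversion step and prove a multi-type local limit valid without primitivity, or switch to the paper's ratio-of-probabilities mechanism.
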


We refer to Remark~\ref{rem:tbd} and the details of its proof given in
Section~\ref{sec:a=0-n>0}
for the usefulness of  the condition~\eqref{eq:cond-zero}, that is, $\alpha_j=0
\implies   n^{(m)}_j= 0$ for the sequence $(\bn^{(m)})_{ m\in  \N}$. 

\begin{rem}[Conditioning on the total size and the number of leaves]
  \label{rem:restriction}
  Notice that conditioning  on the total size and the  number of leaves,
  or equivalently  on the  number of  internal nodes  and the  number of
  leaves, corresponds to $A_1=\N^*\cap \supp(p)$ and $A_2=\{0\}$. Notice
  that                       $A_0=\emptyset$.                        (By
  Remark~\ref{rem:period0}~\ref{it:patho-singleton} notice that the case
  $A_1$     reduced    to     a    singleton     is    excluded     from
  Theorem~\ref{thm:main-thm}, but then it  is equivalent to condition on
  the  number  of  leaves;  and  this  is  considered  in~\cite{ad14a}.)
  Provided the assumptions on genericity and aperiodicity are satisfied,
  , this  case is included  in Theorem~\ref{thm:main-thm} whereas  it is
  excluded \emph{a priori} in Corollary 3.5 in~\cite{adg18}.
\end{rem}

  \begin{rem}[On the case $\cj^{**}=\emptyset$ and $A_0=\emptyset$ or $A_0=\{1\}$]
   \label{rem:singleton}
 Let
$j_0\in \cj^*$ be such that $A_{j_0}=\{0\}$. Notice that $\alpha_{j_0}\in
(0, 1)$.  Since $\cj^{**}=\emptyset$ and $A_0\subset \{1\}$, 
the aperiodic hypothesis is not satisfied. However  the
condition~\eqref{eq:kt=L0-1} implies no choice on $L_0$, so that we can
without loss of generality replace $A_0$ by $A'_0=A_0\cup \{0\}$ and
remove $j_0$ from $\cj^*$, as well as $\alpha$ by $\alpha'\in \R_+^{J-1}$ with
$\alpha'_j=\alpha_j/(1- \alpha_{j_0})$ for $j\in \II{1, J}\setminus\{j_0\}$.
Then the conditioning is the same and the distribution $p$ is aperiodic
for $\ca'=(A_j)_{j\in \II{1, J}\setminus\{j_0\}}$ in the direction $\alpha'$. 

Using this trick, we see that the local convergence of Theorem~\ref{thm:main-thm} holds in this case, eventhough $p$ is not aperiodic. 
 \end{rem}

The  next two  sections are  devoted to  the proof  of the  theorem.  In
Section~\ref{sec:rizzolo},    for    a     tree    $\bt$    such    that
$\cl_\ca(\bt)\neq \emptyset$, we describe a map from $\cl_\ca(\bt)$ onto
a multi-type tree, which is a direct extension of Rizzolo \cite{r15}.
Then, in Section~\ref{sec:proof-main}, we use \cite{adg18} on local
limit of multi-type BGW trees to conclude.

\medskip

From now on the direction $\alpha\in \Delta_J^*$ is fixed and $p_\alpha$
given by~\eqref{eq:def-palpha}
is the  unique critical  probability distribution  $(p, \ca)$-compatible
with     the    direction~$\alpha$.     In    particular,     we    have
$\mu(p_\alpha)=1$.   By  construction,   we   have $p_\alpha(A_j)=0$   if
$\alpha_j=0$ for $j\in \II{1, J}$. Since  only the indices $j$ such that
$\alpha_j$    is    positive    are   pertinent,    for    a    sequence
$\bx=(x_j)_{j\in  \II{1, J}}$,  we shall  consider the  subsequence:
 \begin{equation}
   \label{eq:def-restric*}
    \bx^*=(x_j)_{j\in \cj^*},
   \end{equation}
  where, we recall that $\cj^*=\{j\in \II{1,J}\, \colon\,
  \alpha_j>0\}$.
  For example, we write  $L_{\ca^*}(\bt)=(L_{A_j}(\bt))_{j\in \cj^*}$.

\subsection{Extension of Rizzolo's transformation}
\label{sec:rizzolo}
In  the  following, we  use  the  framework  for multi-type  trees  from
\cite[Section    2]{adg18}.     For    a   tree    $\bt$    such    that
$\cl_\ca(\bt)\neq \emptyset$, we describe a map from $\cl_\ca(\bt)$ onto
a multi-type tree, which is a direct extension of \cite{r15}.

The vertex $u\in\bt$ is said to  have type $j\in\cj$, which we denote by
$e_\bt(u)=j$, if $k_u(\bt)\in A_j$ so  that $(\bt,e_\bt)$ can be treated
as  a $\cj$-type  tree.  Note  $\bt^{[i]}=\{u\in\bt: e_\bt(u)=i\}$.   In
order to remove the $0$-type  vertices, following \cite{r15}, we build a
bijection     $\phi$    (depending     on    $(\bt,     e_\bt)$)    from
$\bt\backslash\bt^{[0]}$  to  a  tree   $\bt^\ca$  with  a  $\cj^*$-type
$e_{\bt^\ca}$, which   preserves  the  types,   that  is,
$e_{\bt^\ca} (\phi(u))=e_\bt(u)\in \cj^*$.   Furthermore, if $\bt^{[0]}$
is empty (which  is automatically the case if  $A_0=\emptyset$), then we
shall  have $  \bt^\ca=\bt$ and  $\phi$ is  the identity  map (and  thus
$e_\bt=e_{\bt^\ca}$).

Let $(\bt, e_\bt)$
be a $\cj$-type tree  such that
$\sharp(\bt\backslash\bt^{[0]})=n\geq 1$.
Following \cite{ad14a}, we define recursively a sequence of growing
$\cj^*$-type trees $(\bt_k, e_{\bt_k})_{k\in \II{1, n}}$ and identify
the last one as $(\bt^\ca, e_{\bt^\ca})$. The map $\phi $ is a
by-product of this construction.
Denote $\prec$ the lexicographic  order on $\cu$.
Let  $u_1\prec\cdots\prec  u_n$  be  the ordered  list  of  vertices  of
$\bt\backslash \bt^{[0]}$. Then, we define recursively:
 \begin{itemize}
 \item $\phi(u_1)=\emptyset$, $\bt_1=\{\emptyset\}$
 and $e_{\bt_1}(\emptyset)=e_{\bt}(u_1)$.
\item For $1<k\leq n$, let  $M(u_{k-1},u_k)\in \{u_1, \ldots, u_{k-1}\}$
be   the most recent  common ancestor of $u_{k-1}$ and $u_k$  and $\bs$ the
  fringe      subtree     of      $\bt$     above      $M(u_{k-1},u_k)$,
  see~\eqref{eq:def-fringe}.

Note $v=\min\{u\in\bs: e_{\bt}(u)\neq 0\}$ (for the lexicographic order).
 Then we set $ \phi(u_k)$ as the concatenation of $\phi(v)$ and
 $(k_{\phi(v)}(\bt_{k-1})+1)$ and consider the tree:
 \[
 \bt_k=\bt_{k-1}\cup \{ \phi(u_k)\},
 \]
 and the type map  $e_{\bt_{k}}$ coincide with $e_{\bt_{k-1}}$ on
 $\bt_{k-1}$ and $e_{\bt_k}(\phi(u_k)) =e_\bt(u_k)$. (This ensures that
 $\phi$ preserves indeed the types.)
\end{itemize}

It is obvious that $(\bt_k,e_{\bt_k})_{k\in \II{1, n}}$ is a sequence of
(increasing)             multi-type              trees.              Let
$(\bt^\ca,e_{\bt^\ca})=(\bt_n,e_{\bt_n})$  and  we   view  $\phi$  as  a
bijection from $\bt\backslash\bt^{[0]}$ to $\bt^\ca$ which preserves the
types. See Fig.~\ref{fig:map} for an example of $\bt$ and $\bt^\ca$ and their
types.

Notice                                                              that
$L_{A_j}(\bt)=\mathrm{Card}\,\{u\in\bt^\ca\,                    \colon\,
e_{\bt^\ca}(u)=j\}$ is the total progeny of  type $j$ (which is equal to
0 if $j\not\in \cj^*$).  For the $\cj^*$-type tree $(\bt^\ca,e_{\bt^\ca})$,
we denote by $\sharp \bt^{\ca}$ the vector  of the total progeny of each type
in $\cj^*$ of $\bt^\ca$:
\begin{equation}
   \label{eq:def-|tA|}
\sharp \bt^{\ca}=L_{\ca^*}(\bt)\in \N^{\cj^*}.
\end{equation}

\begin{figure}[ht]
\begin{center}
\includegraphics{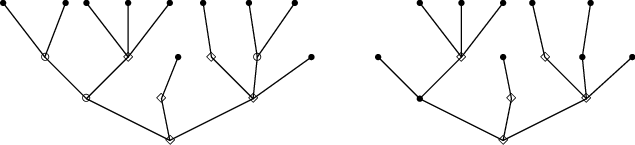}
\end{center}
\caption{ A tree $\bt$ on the left with $A_0=\{2\}$, $A_1=\{1,3\},$
$A_2=\{0\}$ and the tree $\bt^\ca$ after the map on the right.
We represent type 0 with
$\circ$, type 1 with $\diamond $, and type 2 with
$\bullet$.}
\label{fig:map}
\end{figure}

\medskip

Let $\ct_{\alpha}$  be a BGW  tree with critical  offspring distribution
$p_{\alpha}$.   Let  $\ct_{\alpha,*}$  be  distributed  as  $\ct_\alpha$
conditioned to have at least one vertex with out-degree in $A_0^c$, that
is, on $\{L_\ca(\ct_\alpha)  \neq \zero\}$, and, with a  slight abuse of
notation,    we   set    $(\ct_{\alpha}^\ca,e_{\ct_{\alpha}^\ca})$   the
$\cj^*$-type  tree  associated  with $\ct_{\alpha,*}$  by  the  previous
construction.   The proof  of  the next  result, which  is  left to  the
reader, is an adaptation of the proof of~\cite[Theorem 6]{r15}.

\begin{lem}
  \label{lem:J*-BGW}
  The   random  tree   $(\ct_{\alpha}^\ca,e_{\ct_{\alpha}^\ca})$  is   a
  multy-type BGW tree, with types in $\cj^*$.
\end{lem}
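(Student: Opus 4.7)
The plan is to adapt Rizzolo's argument in~\cite{r15} from the single-set case $J=1$ to the present multi-type framework. The key observation is that the deterministic map $\phi$ performs a reduction whose probabilistic interpretation, via the branching property of $\ct_\alpha$, is a multi-type branching process: the type-$0$ vertices are collapsed into pass-through structure, while the non-$A_0$ vertices, endowed with the lex-induced ancestor relation, form the $\cj^*$-type tree $(\ct_\alpha^\ca, e_{\ct_\alpha^\ca})$.

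First, I would introduce the candidate offspring kernels $(\xi_j)_{j\in\cj^*}$. For $j\in\cj^*$, $\xi_j$ is constructed by sampling an integer $k\in A_j$ with weight $p_\alpha(k)/p_\alpha(A_j)$ and, independently for each of the $k$ positions, sampling a reduction $R$ and concatenating the $k$ resulting ordered lists of types. The reduction $R$ is defined recursively: take an independent copy of $\ct_\alpha$; if its root has out-degree in $A_i$ with $i\in\cj^*$, set $R=(i)$; if its root has out-degree $k'\in A_0$, set $R$ to be the concatenation of $k'$ i.i.d.\ copies of $R$. The root-type distribution $\nu$ on $\cj^*$ is the law of the type of the first non-$A_0$ vertex of $\ct_\alpha$ in lex order, conditional on $\{L_\ca(\ct_\alpha)\neq\zero\}$. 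Verifying that $(\ct_\alpha^\ca, e_{\ct_\alpha^\ca})$ has the multi-type BGW structure with root distribution $\nu$ and offspring kernels $(\xi_j)_{j\in\cj^*}$ then reduces to the following combinatorial identity: for every non-type-$0$ vertex $u$ of $\ct_{\alpha,*}$, the ordered list of children of $\phi(u)$ in $\ct_\alpha^\ca$ coincides with the ordered list, in lex order, of first non-$A_0$ descendants of $u$ in $\ct_{\alpha,*}$. This identity is established by induction on the step $k$ of the sequential definition of $\phi$, using that the anchor $v$ appearing at step $k$ is always the closest non-$A_0$ ancestor in $\ct_{\alpha,*}$ of the newly inserted vertex $u_k$. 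Once the identity is granted, the branching property of $\ct_\alpha$ applied at each non-$A_0$ vertex $u$ implies that the $k_u(\ct_\alpha)$ subtrees above $u$ are i.i.d.\ copies of $\ct_\alpha$; hence their reduced contributions to $\ct_\alpha^\ca$ are also i.i.d., which yields the multi-type BGW property.

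The main obstacle is the combinatorial identification above: the sequential definition of $\phi$ in terms of most-recent-common-ancestors is somewhat indirect, and one must verify carefully that the prescribed insertion rule produces at each $\phi(u)$ exactly the ordered list of first non-$A_0$ descendants of $u$ in $\bt$. The induction requires keeping track of how the position of $\phi(u_k)$ depends jointly on the lex order of the non-$A_0$ vertices and on the topology of the type-$0$ chains bridging them. Once this combinatorial step is settled, the probabilistic content reduces to the branching property of $\ct_\alpha$ applied recursively at each non-$A_0$ vertex, and the conclusion follows.
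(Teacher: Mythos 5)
Your proposal hinges on a combinatorial identity that is false: it is not true that the children of $\phi(u)$ in $\ct_\alpha^\ca$ are the lex-ordered minimal non-$A_0$ descendants of $u$ in the original tree, nor is it true that the anchor $v$ at step $k$ is the closest non-$A_0$ ancestor of $u_k$. In the construction, $v$ is the lex-\emph{least} non-type-$0$ vertex of the fringe subtree above $M(u_{k-1},u_k)$, and since $u_{k-1}$ lies in that fringe subtree one has $v\preceq u_{k-1}\prec u_k$, so $v$ is typically \emph{not} an ancestor of $u_k$. Concrete counterexample (in the spirit of Figure~\ref{fig:map}): take $A_0=\{2\}$, $A_2=\{0\}$, and let $\bt$ be the root of out-degree $2$ with two leaf children $u_1=1$, $u_2=2$. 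Then $M(u_1,u_2)=\emptyset$, $v=u_1$, and $\phi(u_2)$ is attached as a child of $\phi(u_1)$, although $u_1$ is a leaf of $\bt$ and has no descendants at all. So the transformed tree is genuinely different from the ``reduced ancestral tree'' your argument identifies it with; in the emblematic case $A_j=\{0\}$ (conditioning on the number of leaves) your reduced tree degenerates to an antichain, while $\bt^\ca$ is a bona fide tree with $L_{A_j}(\bt)$ vertices.

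Consequently your candidate offspring kernels $\xi_j$ (sample $k\in A_j$ and concatenate $k$ independent ``reductions'') do not describe the law of $\ct_\alpha^\ca$: under your kernel a vertex of type $j$ with $A_j=\{0\}$ would almost surely have no children, whereas in the actual construction such a vertex has children with positive probability as soon as $p_\alpha(A_0\cap\{1\}^c)>0$ — compare the explicit description of $p^{\ca,\alpha}_j$ given after the lemma (the variables $N$, $T$, $Y_j=X^j+\sum_{i=1}^{N-1}X^0_i$, the binomial thinning with $r=\P(N\le T)$ and the multinomial typing by $\alpha^*$), and Lemma~\ref{p-ca-critical}~\ref{it:primitiv}, where $m_{j_0\ell}>0$ unless $A_{j_0}=\{0\}$ and $A_0\subset\{1\}$ (or $\qc=0$). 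The correct adaptation of Rizzolo's argument must track the depth-first attachment mechanism of $\phi$ — a type-$j$ vertex contributes its $X^j$ children plus the surplus offspring of a geometric-length chain of type-$0$ vertices, killed at the stopping time $T$ — rather than the induced ancestral relation; only the claim about the root type (law $\alpha^*$ for the lex-first non-$A_0$ vertex) survives from your argument. As it stands, the proposal does not prove the lemma.
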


The  root of $\ct_{\alpha}^\ca$ is of type $e_{\ct_{\alpha}^\ca}(\emptyset)=j$ with probability
$\alpha_j$ for $j\in \cj^*$.
Let $p^{\ca,\alpha}=(p_j^{\ca,\alpha})_{j\in \cj^*}$ be  the offspring distribution of the
$\cj^*  $-type  BGW tree  $(\ct_{\alpha}^\ca,e_{\ct_{\alpha}^\ca})$,
where  $p^{\ca,\alpha}_j$, a  probability  distribution on  $\N^{\cj^*}$, is  the
offspring distribution of an individual of type $j$.
To describe $p^{\ca,\alpha}_j$, we introduce several intermediate
random variables.
\begin{enumerate}[(a)]
   \item Let  $X$  be  a random variable on $\N$ distributed  according
     to   $p_\alpha$.
   \item Let $X^j$ be distributed as $X$ conditionally on $\{X\in A_j\}$.
   \item Let $(X^0_i)_{i\in\N}$  be independent random
       variables  distributed as  $X-1$ conditionally  on $\{X\in  A_0\}$.
\item Let $N$ be       a  geometric  random
  variable       with        parameter       $p_{\alpha}(A_0^c)$.
\end{enumerate}
We assume that the random variables $X^j$, $(X^0_i)_{i\in\N}$ and $N$
are independent. We adopt the convention $\inf \emptyset=+\infty $.
\begin{enumerate}[resume*]

  \item\label{it:def-T} Set   $  T=\inf\left\{n\in\N^*\,\colon\,  \sum_{i=1}^n
      X_i^0=-1\right\}$.

\item\label{it:loi-Y} Set $Y_j= X^j+\sum_{i=1}^{N-1} X^0_i$ on the event $\{N\le T\}$ and $Y_j=0$ otherwise.
  \item\label{it:loi-Z} Conditionally on the above random variables, let $Z_j$ be   a  binomial
    random  variable with  parameters  $(Y_j, r)$, where:
\begin{equation}
   \label{eq:def-r}
  r=\P(N\leq  T)\in (0, 1].
  \end{equation}
\item\label{it:loi-XA}
  Conditionally on the above  random variables, let
    $X^\ca_j=(X^\ca_j(i))_{i\in \cj^*}$ be a
    multinomial    random   variable     with   parameter
    $(Z_j, \alpha^*)$.
\end{enumerate}
Then, the probability distribution  $p_j^{\ca,\alpha}$  is defined as  the law  of
$X_j^\ca$ conditionally on $\{N\leq
  T\}$.

\medskip

Recall that $p^{\ca,\alpha}$ is said to be  aperiodic, if the smallest
subgroup of $\Z^{\cj^*}$ that contains $\bigcup_{j\in \cj^*}
\big(\supp(p_j^{\ca,\alpha})-\supp(p_j^{\ca,\alpha})\big) $ is $\Z^{\cj^*}$. The
 mean matrix $M=(m_{j\ell})_{ j,\ell\in\cj^*}$ of $p^{\ca,\alpha}$  is defined by:
\begin{equation}
   \label{eq:def-mean-matrix}
m_{j\ell}=\E[X^\ca_j(\ell)\, |\, N\leq T].
\end{equation}

The offspring distribution $p^{\ca,\alpha}$ is  critical if the spectral radius
of the mean matrix $M$ is one.
We have the following properties.
Recall  $\qc$ is the unique $\theta\in [0, +\infty ]$ such that
$p_{\theta, \alpha}$ is critical.

\begin{lem}[Properties of the offspring distribution $p^{\ca,\alpha}$]
  \label{p-ca-critical}
 Let    $    p$    be    a   probability    distribution    on    $\N$
  satisfying~\eqref{assumption-p}, such that it is generic for $\ca$
  in the direction $\alpha\in \Delta^*_J$.
\begin{enumerate}[(i)]
\item\label{it:crit}
  The offspring distribution $p^{\ca,\alpha}$ is  critical and
  $\alpha^*$ is the left eigenvector of the mean matrix associated with
  the eigenvalue 1.
\item\label{it:primitiv}     We    have     $m_{j\ell}>0$    for     all
  $j, \ell \in\cj^*$  and $j\neq j_0$, where $j_0\in \cj$  is defined by
  $0\in A_{j_0}$.  If $j_0\in \cj^*$,  then we have that $m_{j_0\ell}=0$
  for    all    $   \ell    \in\cj^*$     either    if $A_{j_0}=   \{0\}$    and
 $A_0\subset \{1\}$ or if  $\qc=0$, and that $m_{j_0\ell}>0$ for
  all $ \ell \in\cj^*$ otherwise.
\item\label{it:period}
  The offspring distribution $p^{\ca,\alpha}$ is aperiodic if and only
  if $p$ in
aperiodic for $\ca$ is the direction $\alpha$ (and thus  $\qc\not\in \{0, +\infty \}$).
\end{enumerate}
\end{lem}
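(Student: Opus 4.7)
I will prove the three parts by separate computations. Throughout, set $S_n = \sum_{i=1}^n X_i^0$, so $T = \inf\{n \geq 1 \colon S_n = -1\}$, and $r = \P(N \leq T)$.

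For part~\ref{it:crit}, the plan is to compute the mean matrix explicitly and match it against the criticality of $p_\alpha$. Successive tower-property applications to the multinomial and binomial layers in the construction of $X^\ca_j$ yield
\[
m_{j\ell} = \alpha_\ell\, \E[Y_j \ind_{\{N\leq T\}}] = \alpha_\ell\, \big(r\, h_j(\qc) + C\big), \qquad C = \E[S_{N-1} \ind_{\{N \leq T\}}],
\]
where I used $\E[X^j] = h_j(\qc)$ and the independence of $X^j$ from $(N, T, S_\cdot)$, and $C$ does not depend on $j$. The left-eigenvector identity $\sum_{j\in \cj^*} \alpha_j m_{j\ell} = \alpha_\ell$ then reduces to the scalar relation $r\, H_\alpha(\qc) + C = 1$. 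To verify it, I would expand $r$ and $C$ as generating-function sums over the geometric $N$ convolved with the walk $S$, use the Dwass/cycle-lemma formula $\P(T = n) = \frac{1}{n}\P(S_n = -1)$, and then reduce the result to the criticality relation $p_\alpha(A_0)\, h_0(\qc) + (1 - p_\alpha(A_0))\, H_\alpha(\qc) = 1$ coming from~\eqref{mu-expression} together with $\mu(p_\alpha) = 1$. The degenerate cases $\qc \in \{0, +\infty\}$ are verified separately by direct computation since $p_\alpha$ is then very simple.

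For part~\ref{it:primitiv}, the plan is a short case analysis based on the equivalence $m_{j\ell} > 0 \iff \P(Y_j \geq 1,\, N \leq T) > 0$, which holds because $\alpha_\ell > 0$ for every $\ell \in \cj^*$. For $j \neq j_0$, we have $0 \notin A_j$, hence $X^j \geq 1$ almost surely, and combined with $r > 0$ this gives $m_{j\ell} > 0$. For $j = j_0$, positivity of $Y_{j_0}$ can come from $X^{j_0} \geq 1$ (which requires $A_{j_0} \neq \{0\}$ and $\qc \neq 0$, so that $p_\alpha$ charges some non-zero element of $A_{j_0}$) or from $S_{N-1} \geq 1$ (which requires $N \geq 2$ to be possible, hence $A_0 \neq \emptyset$, and some $X_i^0 \geq 1$, hence $A_0 \not\subset \{1\}$ and $\qc \neq 0$). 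Collecting these conditions produces exactly the dichotomy in the statement.

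For part~\ref{it:period}, the plan is to identify the subgroup of $\Z^{\cj^*}$ generated by $\bigcup_{j \in \cj^*} (\supp(p_j^{\ca,\alpha}) - \supp(p_j^{\ca,\alpha}))$. A vector $(n_\ell) \in \supp(p_j^{\ca,\alpha})$ has total $\sum_\ell n_\ell \in \supp(Z_j \mid N \leq T)$; since $\alpha_\ell > 0$ for all $\ell \in \cj^*$, the multinomial $(n, \alpha^*)$ has full support on tuples summing to $n$, so differences at fixed totals generate the full sum-zero sublattice of $\Z^{\cj^*}$, provided some $Z_j$ reaches a value $\geq 2$ (guaranteed by~\eqref{eq:patho} and $\qc \in (0, +\infty)$). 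Under the sum map, the generated group projects to the subgroup of $\Z$ generated by $\bigcup_j (\supp(Z_j) - \supp(Z_j))$. When $0 \in A_0$, the binomial smearing (with $r < 1$) makes $\supp(Z_j)$ a full integer interval so this projection equals $\Z$; simultaneously $-1 \in A_0 - 1 \subset \Gamma_\alpha$, so the group generated by $\Gamma_\alpha$ is also $\Z$. When $0 \notin A_0$, we have $r = 1$, $Z_j = Y_j$, and since $\supp(p_\alpha) \cap A_k = A_k$ for every $k \in \cj$ when $\qc \in (0, +\infty)$, the differences $\supp(Y_j) - \supp(Y_j)$ generate exactly the group generated by $(A_j - A_j) \cup (A_0 - 1)$; unioning over $j \in \cj^*$ recovers the group generated by $\Gamma_\alpha$. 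For $\qc \in \{0, +\infty\}$ each $\supp(p_\alpha) \cap A_j$ is a singleton or empty, so $X^j$ and the $X_i^0$ are all deterministic and aperiodicity fails.

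The main obstacle is the scalar identity $r\, H_\alpha(\qc) + C = 1$ of part~\ref{it:crit}, whose verification requires a delicate generating-function computation for the random walk $(S_n)$ killed at an independent geometric time, via the cycle lemma; the remaining parts reduce to routine case analyses.
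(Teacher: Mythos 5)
Your parts (ii) and (iii) follow essentially the same route as the paper: you reduce positivity of $m_{j\ell}$ to $\P(Y_j\geq 1,\,N\leq T)>0$ using the positivity of $\alpha^*$ and $r>0$, and you reduce aperiodicity of $p^{\ca,\alpha}$ to a one-dimensional condition through the multinomial layer, then split according to $0\in A_0$ (where $r<1$ and both sides of the equivalence hold) or $0\notin A_0$ (where $r=1$, $Z_j=Y_j$, and the difference group is the one generated by $\bigl(A_j\cap\supp(p_\alpha)-A_j\cap\supp(p_\alpha)\bigr)\cup\bigl(A_0\cap\supp(p_\alpha)-1\bigr)$). The bookkeeping you leave implicit in (ii) ("collecting these conditions") is exactly the case analysis the paper writes out — in particular the observation that $\qc=+\infty$ forces $A_{j_0}=\{0\}$ and $A_0\subset\{1\}$, so that the degenerate-at-infinity case lands in the first alternative of the dichotomy; it is routine, but it has to be displayed.

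The genuine gap is in part (i). You correctly reduce the claim to the scalar identity $r\,H_\alpha(\qc)+C=1$ with $C=\E\bigl[S_{N-1}\ind_{\{N\leq T\}}\bigr]$, and this identity is indeed equivalent to $\mu(p_\alpha)=1$; but you then only announce a verification "via the cycle lemma and generating functions for the walk killed at an independent geometric time", explicitly calling it the main obstacle, and you never carry it out — so the central computation of (i) is missing. Moreover, that route is heavier than needed: no formula for $r$ or for $\P(T=n)$ is required. The paper's short argument writes $\E\bigl[S_{N-1}\ind_{\{N\leq T\}}\bigr]=\E[S_{N-1}]-\E\bigl[S_{N-1}\ind_{\{N>T\}}\bigr]$ and uses the strong Markov property of the walk at $T$ together with the memorylessness of the geometric $N$ to get $\E\bigl[S_{N-1}\ind_{\{N>T\}}\bigr]=(1-r)\bigl(-1+\E[S_{N-1}]\bigr)$; hence $C=1-r+r\,\E[S_{N-1}]$, the unknown $r$ drops out, and the identity reduces to $H_\alpha(\qc)+\E[S_{N-1}]=1$, i.e.\ to $m_0+p_\alpha(A_0^c)\,H_\alpha(\qc)=1$ with $m_0=\sum_{k\in A_0}k\,p_\alpha(k)$, which is precisely criticality of $p_\alpha$ via~\eqref{mu-expression}. (The paper also works with $\E[X^j]=m_j/p_\alpha(A_j)$ rather than $h_j(\qc)$, so no separate treatment of $\qc\in\{0,+\infty\}$ is needed.) Finally, you only check that $\alpha^*$ is a left eigenvector of $M$ with eigenvalue $1$; to conclude that $p^{\ca,\alpha}$ is \emph{critical} you must also argue that $1$ is the spectral radius, which the paper gets from the rank-one structure $m_{j\ell}=\E[Y_j]\,\alpha_\ell$ and Perron--Frobenius (a nonnegative matrix with a positive left eigenvector has that eigenvalue as its spectral radius); this step is absent from your proposal.
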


\begin{proof}
We prove Point~\ref{it:crit} on the criticality of $p^{\ca,\alpha}$.  By
construction and~\eqref{eq:def-mean-matrix},
the entries of the mean matrix $M$  are given by, for
$j,\ell\in \cj^*$:
\[
  m_{j \ell}=\E[X_j^\ca(\ell)\, |\, N\leq T]
=\E[Z_j\, |\, N\leq T]\, \alpha_\ell
=\E[Y_j]\alpha_\ell .
\]
In particular the mean matrix has rank one and
$\alpha^*$ is the left eigenvector
associated with the non-zero eigenvalue, say $\rho$.
Since the mean matrix has nonnegative entries and $\alpha^*$ as positive
entries, we also get that $\rho$ is the Perron-Frobenius eigenvalue and
thus the spectral radius of $M$.
Since $M$ has
rank one, we also get that $\rho$ is the trace of $M$:
\[
\rho=\sum_{j\in \cj^*} \E[Y_j]\, \alpha_j.
\]
We now compute  $r\E[Y_j]$ using~\eqref{eq:def-r}:
\[
\E[Y_j]= \E\left[\left(X^j+\sum_{i=1}^{N-1}
    X^0_i\right)\ind_{\{N\leq T\}}\right]
=r\E[X^j]+\E\left[\sum_{i=1}^{N-1} X^0_i\right]-
\E\left[\sum_{i=1}^{N-1} X^0_i\ind_{\{N\geq T\}}\right].
\]
Using the strong Markov property of $(X_i^0)_{i\in \N^*}$ at the
stopping time $T$ and its definition (see~\ref{it:def-T}), we get:
\begin{align*}
\E[Y_j]
&=r\E[X^j]+\E\left[\sum_{i=1}^{N-1} X^0_i\right]-
(1-r)\left(-1+\E\left[\sum_{i=1}^{N-1} X^0_i\right]\right)\\
&=1+ r\E\left[X^j-1+\sum_{i=1}^{N-1} X^0_i \right]\\
&=1+r\left(\frac{m_j}{p_{\alpha}(A_j)}-1
+\frac{m_0-p_{\alpha}(A_0)}{p_{\alpha}(A_0^c)}\right)\\
&=1+r\left(\frac{m_j}{p_{\alpha}(A_j)}
+\frac{m_0-1}{p_{\alpha}(A_0^c)}\right),
\end{align*}
where $m_\ell= \sum_{k\in A_\ell} k  p_\alpha(k)$ for $\ell\in \cj$.  As
$p_\alpha$ is  critical, we  get that  $\sum_{j\in \cj}  m_j=1$.  Recall
that   $\alpha_j=p_\alpha(A_j)/p_\alpha(A_0^c)$    for   $j\in   \cj^*$.
Therefore, we obtain:
\[
  \rho
  = \sum_{j\in \cj^*}\alpha_j+r\sum_{j\in
    \cj^*}\frac{m_j}{p_\alpha(A_0^c)}
  +r\frac{m_0-1}{p_\alpha(A_0^c)}\sum_{j\in \cj^*}\alpha_j
=1+r\frac{1-m_0}{p_\alpha(A_0^c)}+r\frac{m_0-1}{p_\alpha(A_0^c)}=1.
\]
This ensures that $p^{\ca,\alpha}$ is critical.

\medskip We prove Point~\ref{it:primitiv} on the positive entries of the
mean matrix. Let $j\in \cj^*$. We deduce from~\eqref{eq:def-mean-matrix}
and  from~\ref{it:loi-XA}   (where  $\alpha^*$  has   positive  entries)
and~\ref{it:loi-Z} (where $r>0$)  that $(m_{j\ell})_{\ell\in \cj^*}$ are
all  positive if  $\P(Y_j=0)<1$ and  all zero  if $\P(Y_j=0)=1$.
Notice that  $T=1$ a.s.\  implies  $A_0=\{0\}$ and thus
$Y_j>0$ a.s. on $\{N\leq T\}$, so  $\P(Y_j=0)=1$ implies that $\P(T\geq  2)>0$. We
deduce  that  $\P(Y_j=0)=1$   is  equivalent  to
$\P(X^j=0)=1$  and $\P(N=1)=1$  or $\P(X_i^0=0)=1$.   Thus $\P(Y_j=0)=1$  is
equivalent   to  $0\in   A_j$,   $p_\alpha(A_j\cap   \{0\}^c)  =0$   and
$p_\alpha(A_0)=0$ or $p_\alpha(A_0 \cap\{1\}^c)=0$, that is, $0\in A_j$,
$p_\alpha(A_j\cap  \{0\}^c) =0$  and $p_\alpha(A_0  \cap\{1\}^c)=0$.  To
conclude,   notice  that those conditions are equivalent to either
$A_j=\{0\}$ and $A_0\subset \{1\}$ or $0\in A_j$ and  $\qc=0$.

\medskip

We prove Point~\ref{it:period} on the periodicity  of $p^{\ca,\alpha}$.
Thanks to~\ref{it:loi-XA} and the fact that $\alpha^*$ has positive
entries, we deduce that $p^{\ca,\alpha}$ is aperiodic (in $\Z^{\cj^*}$) if and
only if
the smallest
subgroup of $\Z$ that contains $\Gamma=\bigcup_{j\in \cj^*}
\big(\supp(\text{Law}(Z_j\, |\, N\leq T))-\supp(\text{Law}(Z_j\, |\, N\leq T))\big)
$ is $\Z$.
From the definition of the law of $Z_j$ given in~\ref{it:loi-Z}, we
shall consider the two  cases $r=1$ and $r<1$.
We also  remark   that  $r=1$  if  and  only  if
$T=+\infty   $    a.s.\   or    $N=1$   a.s,   which    corresponds   to
$0\not\in A_0\cap \supp(p_\alpha)$, that is, $0\not\in A_0$ as  $p_\alpha(0)>0$.

In the easy case $0\in A_0$ (and thus $\qc\in (0, +\infty )$), we get on
the one hand
that $X^j>0$ a.e., and as $\P(N=1)>0$, we deduce that $\P(Y_j>0)>0$ and
thus that $\{0, 1\}\subset \supp(\text{Law}(Z_j\, |\, N\leq T))$. This implies that
$p^{\ca,\alpha}$ is aperiodic.
On the other hand, we also get that  $p$ is
  $(\ca, \alpha)$-aperiodic, see
  Remark~\ref{rem:period0}~\ref{it:period-0A0}.

  We  now consider  the  case $0\not\in  A_0$, that  is  $r=1$ and  thus
  $Z_j=Y_j$ and a.s.\  $T=+\infty $.
If  $p_\alpha(A_0)>0$,  we have  $\P(N=k)>0$ for  all $k\in  \N$, and if
 $p_\alpha(A_0)=0$, we have a.s.\ $N=1$ and $A_0\cap \supp(p_\alpha) \subset \{1\}$. In
 both cases, we deduce  from~\ref{it:loi-Y} that:
\[
  \supp(\text{Law}(Z_j\, |\, N\leq T))=\supp(\text{Law}(Y_j\, |\, N\leq T))=\supp(\text{Law}(X^j\, |\, N\leq T))+ \N
(  A_0^\alpha-1),
\]
where  $\N  B=\{n   b\,  \colon,  n\in  \N,  b\in   B\}$
and  $A_0^\alpha=A_0 \cap
\supp(p_\alpha)$.
We set $A_j^\alpha= A_j\cap  \supp(p_\alpha)$ for $j\in \cj^*$ and notice that $A_j^\alpha
=\supp(\text{Law}(X^j\, |\, N\leq T))$.  We then
get that:
\[
  \Gamma=\Big(\bigcup_{j\in \cj^*} (A^\alpha_j- A^\alpha_j)\Big)
  \cup \Z (A_0^\alpha-1).
\]
If $\qc\in \{0, +\infty\}$, we get that $A_0^\alpha\subset \{1\}$ and
$A^\alpha_j$ are singletons for $j\in \cj^*$, so that $\Gamma=\{0\}$.
If $\qc\in (0, +\infty)$, we get that $\Gamma=\Gamma_\alpha$ defined
in~\eqref{eq:Gamma-a}.

Thus  $p^{\ca,\alpha}$ is aperiodic if and only if $\qc\not\in \{0,
+\infty \}$ and the smallest subgroup in $\Z$ containing $\Gamma_\alpha$
is $\Z$, that is, $p$ is
aperiodic for $\ca$ in the direction $\alpha$.
\end{proof}

\subsection{Proof of Theorem~\ref{thm:main-thm}}
\label{sec:proof-main}
Recall
(see Section \ref{sec:lct}) the set of trees $\T(\bt,x)$ for $\bt\in
 \T_0$ and $ x\in \cl_0(\bt)$.
As $p$ is generic for $\ca$ in the direction $\alpha$, there exists by
Lemma \ref{lem:a-admiss} a sequence $(\bn^{(m)})_{ m\in\N}$ satisfying~\eqref{eq:cond-n_m2}-\eqref{eq:cond-zero}.
Since  $p_\alpha$ is $(p,\ca)$-compatible, we have for $m\in\N$, $\bt\in\T_0$
and $x\in\cl_{\{0\}}(\bt)$, with $\ct_\alpha=\ct_{p_\alpha}$ and
$\ct^*_\alpha=\ct^*_{p_\alpha}$, that:
\[
\P(\ct_p\in \T(\bt,x)| L_\ca(\ct_p)=\bn^{(m)})
=\P(\ct_\alpha\in \T(\bt,x)| L_\ca(\ct_{\alpha})=\bn^{(m)}).
\]

\medskip

For $j\in \II{1, J}$, recall $\be^j\in \N^ J$ is the vector with all its entries equal to $0$ but the $j$-th which is equal to $1$, and that $\be^0=\zero$. 
Set  $j_0\in \II{0, J}$ such that $0\in A_{j_0}$ and set:
\[
\bb=\be^{j_0}.
\]
 We have:
\begin{align*}
\P(\ct_{\alpha}\in \T(\bt,x), L_\ca(\ct_{\alpha})=\bn^{(m)})
&=\sum_{\tilde{\bt}\in \T_0}\P(\ct_{\alpha}=\bt\circledast (\tilde{\bt},x))
\ind_{\{L_\ca(\bt\circledast (\tilde{\bt},x))=\bn^{(m)}\}}\\
&=\frac{1}{p_\alpha(0)}\sum_{\tilde{\bt}\in \T_0}\P(\ct_{\alpha}=\bt)\,
\P(\ct_{\alpha}=\tilde{\bt})
\ind_{\{L_\ca(\tilde{\bt})=\bn^{(m)}-L_\ca(\bt)+\bb\}}\\
&=\frac{1}{p_\alpha(0)}\P(\ct_{\alpha}=\bt)\,
\P(L_\ca(\ct_{\alpha})=\bn^{(m)}-L_\ca(\bt)+\bb)\\
&=\P(\ct_{\alpha}^*\in\T(\bt,x))\,\P(L_\ca(\ct_{\alpha})
=\bn^{(m)}-L_\ca(\bt)+\bb),
\end{align*}
where we used that $p_\alpha$ is critical (and thus $\ct_\alpha$ is
a.s.\ finite) for first and third equalities
and~\eqref{eq:loi-Tp*}
for the last.
Recall the notation $\bx^*$ from~\eqref{eq:def-restric*}
which is the restriction of the sequence $\bx$ indexed by $\II{1, J}$ to
the indices $\cj^*$, and that  $L_{\ca^*}(\bt)=(L_{A_j}(\bt))_{j\in \cj^*}$.
We get:
\begin{align}
\P(\ct_{\alpha}\in \T(\bt,x)\,|\, L_\ca(\ct_{\alpha})=\bn^{(m)})
&=\P(\ct_{\alpha}^*\in \T(\bt,x))\, \frac{\P(L_\ca(\ct_{\alpha})
=\bn^{(m)}-L_\ca(\bt)+\bb)}{\P(L_\ca(\ct_{\alpha})=\bn^{(m)})}\nonumber\\
&=\P(\ct_{\alpha}^*\in \T(\bt,x))
\,\frac{\P(L_{\ca^*}(\ct_{\alpha})=\bn^{(m)*}-L_{\ca^*}(\bt)+\bb^*)}
{\P(L_{\ca^*}(\ct_{\alpha})=\bn^{(m)*})},\label{eq:palpha-L-A}
\end{align}
where we used that
$L_{A_j}(\ct_{\alpha})=0$ for $j\not\in \cj$
(and thus $\P(\ct_{\alpha}\in \T(\bt,x))=\P(\ct^*_{\alpha}\in
\T(\bt,x))=0$ if $L_{A_j}(\bt) \neq 0$ for some $j\not\in \cj$) as well
as that $(\bn^{(m)})_{ m\in
  \N}$ satisfies~\eqref{eq:cond-zero} for the second equality.
 Now we apply the  extension of Rizzolo's transformation
 for $\ct_{\alpha}$ to get a $\cj^*$-type BGW tree
  $\ct_{\alpha}^\ca$  such that
  $\sharp \ct_{{\alpha}}^\ca=L_{\ca^*}(\ct_{{\alpha}})$ (see
  definition~\eqref{eq:def-|tA|}).
Hence ~\eqref{eq:palpha-L-A} is equivalent to:
\begin{equation}
   \label{eq:pre-19}
\P(\ct_{\alpha}\in \T(\bt,x)\,|\,  L_\ca(\ct_{\alpha})=\bn^{(m)})
=\P(\ct_{\alpha}^*\in \T(\bt,x))
\, \frac{\P(\sharp \ct_{{\alpha}}^\ca=\bn^{(m)*}-L_{\ca^*}(\bt)+\bb^*)}
{\P(\sharp \ct_{{\alpha}}^\ca=\bn^{(m)*})}\cdot
\end{equation}

We consider the following condition which appears in
Lemma~\ref{p-ca-critical}~\ref{it:primitiv}:
\begin{equation}
  \label{eq:delicat}
  A_0\subset \{1\}
  \quad\text{and}\quad
  A_{j_0}=\{0\}
  \quad\text{for some}\quad
  j_0\in \cj^*.
\end{equation}
We first assume that~\eqref{eq:delicat} does not hold.
Hypothesis  of Theorem~\ref{thm:main-thm}  and Lemma~\ref{p-ca-critical}
ensure  that Assumptions  $(H_1)$ (on  the offspring  distribution being
critical and the mean matrix primitive) and $(H_2)$ (on the aperiodicity
of the offspring reproduction)  hold in~\cite{adg18} and that $\alpha^*$
is  the positive  normalized left  eigenvector of  the mean  matrix (see
hypothesis in Lemma~3.11 in~\cite{adg18} where $a=\alpha^*$ and use that
$(\bn^{(m)},  \,   m\in  \N)$  is   a  sequence  in   $\N^J$  satisfying
\eqref{eq:cond-n_m2}-\eqref{eq:cond-zero}),  so  that the  strong  ratio
theorem  or more  precisely (19)  in~\cite{adg18}  holds, which  entails
that:
\begin{equation}
   \label{eq:19}
\lim_{m\to\infty}\frac{\P(\sharp \ct_{{\alpha}}^\ca=\bn^{(m)*}-L_{\ca^*}(\bt)+\bb^*)}
{\P(\sharp \ct_{{\alpha}}^\ca=\bn^{(m)*})}=1.
\end{equation}

We deduce from~\eqref{eq:pre-19}
and~\eqref{eq:19}
that for all $\bt\in \T_0$ and $x\in \cl_{\{0\}}(\bt)$:
\[
\lim_{m\to\infty}\P(\ct_p\in \T(\bt,x)| L_\ca(\ct_p)=\bn^{(m)})
=\P(\ct_{\alpha}^*\in\T(\bt,x)).
\]
For $\bt\in\T_0$, it is obvious from~\eqref{eq:cond-n_m}
that $
\lim_{m\to\infty}\P(\ct_p=\bt| L_\ca(\ct_p)=\bn^{(m)})
=\P(\ct_{\alpha}^*=\bt)=0.
$
The result thus follows from the fact that  the family $\{(\T(\bt,x),\ \bt\in
\T_0,\ x\in \cl_0(\bt)\}\cup\T_0$ is convergence determining for the
local convergence in $\T_0\cup\T_1$.

\medskip  We now   consider  that~\eqref{eq:delicat}
holds.   We  first  check  that  it  is  enough  to  consider  the  case
$A_0=\emptyset$, where the Rizzolo's transformation is the identity map.
Indeed, if $A_0\neq \emptyset$, that is, $A_0=\{1\}$, then the Rizzolo's
transformation corresponds to discarding individuals with only one
child.  This   amounts  to  replace  the  offspring
distribution     $p$     (resp.     $p_{\theta,\alpha}$)     by     $p'$
(resp. $p'_{\theta,\alpha}=(p')_{\theta,\alpha}$) where $p'(k)=p(k)/q_1$
for  $k\neq  1$   and  $p'(k)=0$ for $k=1$.  Then,  notice   that  $\theta$  s.t.\
$p'_{\theta,\alpha}$ is critical is  exactly $\qc$, so without
confusion, we can also replace $p_\alpha$ by $p'_\alpha=(p')_\alpha$. In
conclusion, using  this modification amounts to only consider  the case:
\begin{equation}
  \label{eq:delicat2}
  A_0=\emptyset
  \quad\text{and}\quad
  A_{j_0}=\{0\}
  \quad\text{for some}\quad
  j_0\in \cj^*.
\end{equation}
Notice this case is  ruled out in~\cite[Corollary~3.5]{adg18}.
However a
slight    modification   of    the   proofs    in~\cite{adg18},  which
we sketch in
Section~\ref{sec:appendix}   (take   $d=\Card(\cj^*)$  and
$d=j_0$ therein) allows to get~\eqref{eq:19}, which we now read as, for
a sequence
$(\bn^{(m)},  \,   m\in  \N)$   in   $\N^J$  satisfying
\eqref{eq:cond-n_m2}-\eqref{eq:cond-zero}:
\begin{equation}
   \label{eq:19-A0=0}
\lim_{m\to\infty}\frac{\P(\sharp \ct_{{\alpha}}=\bn^{(m)*}-L_{\ca^*}(\bt)+\bb^*)}
{\P(\sharp \ct_{{\alpha}}=\bn^{(m)*})}=1,
\end{equation}
where  $\ct_\alpha$ is  seen  as a  multi-type BGW  tree,  where a  node
$u\in  \ct_\alpha$ as  type  $j\in \cj^*$  if $k_u(\ct_\alpha)\in  A_j$.
Notice          the     corresponding      offspring
distribution      is
$\bp=(\bp^{(i)})_{i\in     \cj^*}$     where
$   \bp^{(i)}=(   \bp^{(i)}(\bk))_{\bk\in   \N^{\cj^*}}$  is   a
probability distribution on $\N^{\cj^*}$ whose non-zero terms are given by:
\[
  \bp^{(i)}(\bk)= \frac{ p_\alpha(|\bk|)}{p_\alpha(A_i)}
  \, \mathrm{Mult}(\bk, \alpha^*)
  \quad\text{for}\quad
  |\bk|\in A_i,
 \]
 and $\mathrm{Mult}(\bk, \alpha^*)$ is the probability that a
 multinomial random variable with parameter $(|\bk|, \alpha^*) $
 takes the value $\bk$. Furthermore the type of the root is distributed
 as $\alpha^*$. With this setting, we emphasize
 that~\eqref{eq:19-A0=0} is exactly  (19) in~\cite{adg18}, up to a relabeling.
Once~\eqref{eq:19-A0=0} is established, then we finish the proof  as
in the case where~\eqref{eq:delicat} does not hold.

We now give the properties of the offspring
distribution $\bp$ and the type of the root (recall
that~\eqref{eq:delicat2} holds and that the Rizollo's
transformation is the identity map); Under assumption of
Theorem~\ref{thm:main-thm}, we have:
\begin{enumerate}[(a)]
\item The type of the root is distributed as $\alpha^*$.
\item $\bp^{(j_0)}(\zero)=1$.
\item By Lemma~\ref{p-ca-critical}~\ref{it:primitiv}, the mean
  matrix $M=(m_{ij})_{i,j\in \cj^*}$ is such that for $j\in \cj^*$ we
    have $m_{ij}\in (0, +\infty
    )$ for $i\neq j_0$ and $m_{ij}=0$ otherwise.
  \item By Lemma~\ref{p-ca-critical}~\ref{it:crit}
    $\bp$ is critical     and $\alpha^*$ is the left eigenvector with
    eigenvalue~1.
\item By  Lemma~\ref{p-ca-critical}~\ref{it:period}
  $\bp$ is aperiodic.
\item Since  $p_\alpha$ satisfies~\eqref{assumption-p} and
  $A_0=\emptyset$, we deduce from the definition of $\bp$ that there
  exists a type $j\in \cj^*$ such that individual of type $j$ has two
  children or more with positive probability, that is, $\bp$ is
  non-singular.
\end{enumerate}
In particular, the offspring distribution $\bp$ satisfies
hypothesis~\eqref{eq:hyp-p-d}-\eqref{eq:hyp-p-crit-2} from
Section~\ref{sec:appendix}.
To conclude, we  refer to
Section~\ref{sec:appendix} on how to get
(19) in~\cite{adg18} under this set of hypothesis.

  \begin{rem}[On related work]
\label{rem:related}
  The   case   $A_0=\emptyset$    and   $\Card(A_{j_0})\geq   2$   where
  $j_0\in  \II{1,  J}$  is  such   that  $0\in  A_{j_0}$  (compare  with
  condition~\eqref{eq:delicat})         could         be         handled
  using~\cite[Theorem~5.1]{p16} on  multi-type BGW processes.   (We also
  believe that condition (A5) there, which  amounts to say that for each
  $j\in \II{1,  J}$ there is $k\in  A_j$ such that $k+1  \in A_j$, could
  certainly be  relaxed.)  Notice that the  moments condition considered
  there  does  not  allow  to consider  directions  $\alpha$  such  that
  $\qc=\theta_{\max}$  (this case  might indeed exist).   The possible  vector
  $\ba=(a_1, \ldots, a_J)$ considered in~\cite{p16} (which is associated
  with the critical  BGW multi-type process) corresponds  in our framework
  to $a_j =\qc \alpha_j/ p(A_j)$ for  $j\in \II{1, J}$ and the direction
  $\bar  \bv$  which  appears  in~\cite[Eq.~(5.1)]{p16}  corresponds  to
  $\alpha$. In  our approach, we  first fix the direction  $\alpha$, and
  then  give  sufficient  (and  almost necessary)  conditions  for  the
  existence and uniqueness of the  critical parameter $\qc$ and thus how
  to choose the parameter $\ba$ given the direction $\alpha$.
\end{rem}

\subsection{On  the proof of~\eqref{eq:19-A0=0}}
\label{sec:appendix}

In this  section we quickly  revisit the proof of  (19) in~\cite{adg18},
using slightly different  assumptions in order to take  into account the
particular case~\eqref{eq:delicat} from Section~\ref{sec:proof-main}. In
this section only, we stick to the notations introduced in~\cite{adg18}.
Let  $d\geq   2$  and  set   $[n]=\II{1,  n}$  for  $n\in   \N^*$.   Let
$p=(p^{(i)}, i\in [d])$ with  $p^{(i)}=(p^{(i)}(\bk), \, \bk\in \N^d)$ being
probability  distributions   on  $\N^d$.    We  assume
that:
\begin{equation}
  \label{eq:hyp-p-d}
  \boxed{p^{(d)}(\zero)=1}.
\end{equation}
For $i\in [d]$, let $X_i=(X_i^{(j)}, \,  j\in [d])$ be a random variable
on $\N^d$  with probability distribution  $p ^{(i)}$. In  particular, we
have   that  a.s.\   $X_d=\zero$.  We   consider  the   generating  function
$f=(f^{(i)}, i\in [d])$ of $p$ defined by:
\[
  f^{(i)} (s)=\E\left[\prod_{j\in [d]} s_j ^{X_i^{(j)}}\right],
  \quad\text{where}\quad
  s=(s_j, j\in [d])\in [0, 1]^d.
\]
We  consider   the  mean  matrix   $M=(m_{ij};  \,  i,j\in   [d])$  with
$m_{ij}=\E[X_i^{(j)}]$. We assume that:
\begin{equation}
  \label{eq:hyp-p-mean}
m_{ij}\in (0, +\infty )
\quad\text{for all}\quad
i\in [d-1], \, j\in  [d];
\end{equation}
notice  that $m_{dj}=0$  for  all
$j\in [d]$.  In  particular \emph{the matrix $M $ is  not primitive}, as
there is no $n\in \N^*$ such that $M^n$ has only positive finite
entries; notice that $M$ primitive is  part of assumption (H1)
in~\cite{adg18} (this condition is mainly used to apply Perron-Frobenius
theorem on the existence and uniqueness of a  left and a right
eigenvector having nonnegative entries, and their corresponding
eigenvalue is in fact the spectral radius of $M$).
We recall that $p$ is critical if the spectral radius of $M$ is $1$; and
that $p$ is non-singular if  $f(s)\neq Ms$. We assume that:
\begin{equation}
  \label{eq:hyp-p-crit-1}
  p \quad\text{is critical and non-singular};
\end{equation}
notice this is the
other part  of assumption  (H1) in~\cite{adg18}.
We  also assume  that:
\begin{equation}
  \label{eq:hyp-p-crit-2}
  p \quad\text{is aperiodic},
\end{equation}
that  is,  the
smallest       subgroup       of       $\Z^d$       which       contains
$\bigcup_{i\in [d]} \big(\supp(p ^{(i)})  - \supp(p ^{(i)})\big)$ is $\Z^d$
itself; this correspond to hypothesis (H2) in~\cite{adg18}.
\medskip

For $i\in [d]$, let $\mathbf{e}_i$ denote  the vector of $\R^d$ with all
its entries equal  to $0$ but the  $i$-th which is equal  to $1$.  Using
Perron-Frobenius theorem for the matrix $M$ reduced to the first $d-1$
lines and columns and using
that the  $d$-th line of  $M$ is  zero and the other  entries are  positive, we
deduce that:
\begin{itemize}[-]
   \item the eigenvalue $1$  is simple;
\item  there exists  two left
eigenvectors  with  non-negative   entries:  the  vector  $\mathbf{e}_d$
with  eigenvalue $0$, and a vector $a\in \R^d$ having  positive
with  eigenvalue 1;
\item  there exists a unique right
  eigenvector $a^*=(a^*(i), i\in [d])$ with  eigenvalue 1, and its entries are
  positive but for
the $d$-th wich is zero:  $a^*(d)=0$.
\end{itemize}
This result is the reason why we can remove the primitive assumption
of $M$.

\medskip

Then the  results on  the Dwass  formula for  BGW multi-type  trees from
Section~3.2 in~\cite{adg18}  also hold,  as the expressions  therein are
algebraic in the entries of $p$. (For example Lemma~3.8 holds, but notice
that both terms of the equality therein are zero if $r=d$.)  Now formula
(19) in~\cite{adg18} is  then a direct consequence of  the Dwass formula
and the  technical Lemma~3.11  therein.  This  latter result,  proved in
Section~3.4,  is also  a direct  consequence  of Lemma 3.12,  which
asserts that an  intermediate random variable $Y$ on  $\Z^{2d-1}$ has an
aperiodic distribution,  and of Lemma  4.11, which  is a variant  of the
strong ratio theorem for the  random walk with increments distributed as
$Y$.  Now looking carefully at the proof  of Lemma 3.12, we see that $p$
is    assumed    to    be    aperiodic    (this    is    (H2)    therein
and~\eqref{eq:hyp-p-crit-2} here) and that hypothesis (H1) is only used at
the end of the proof to  get that $\P(X_d=\zero)>0$; but this is clearly
the case  if~\eqref{eq:hyp-p-d} holds.   To conclude, notice  that Lemma
4.11 on the  strong ratio theorem requires  only that the law  of $Y$ is
aperiodic  (which   is  provided  by   Lemma  3.12)  and  that   $Y$  is
integrable. By the  construction of $Y$ given in  Section~3.4, we notice
that $Y$  is integrable if  and only if the  mean matrix $M$  has finite
entries, which  is hypothesis~\eqref{eq:hyp-p-mean}.  In  conclusion, we
obtain that (19)  in~\cite{adg18} holds (notice that the root has  to be of
type $r\neq d$ otherwise the numerator and denominator are both zero).

\begin{rem}[On the extension to the main result of~\cite{adg18} under
hypothesis~\eqref{eq:hyp-p-d}-\eqref{eq:hyp-p-crit-2}]
   \label{rem:Kesten-ADG}
   We  leave   to  the  interested   reader  the  construction   of  the
   corresponding  Kesten tree,  see  Section~2.6 in~\cite{adg18},  where
   here individuals  on the  infinite spine  can not  have type  $d$ (in
   particular,  the  root has  not  type  $d$). (For  example  Lemma~2.9
   therein  holds provided  $i, r$  belong to  $[d-1]$.) Then,  assuming
   hypothesis~\eqref{eq:hyp-p-d}-\eqref{eq:hyp-p-crit-2},   we  have   the
   analogue  of   Theorem~3.1  therein  on  the   local  convergence  in
   distribution,  towards the  Kesten tree  of the  BGW multi-type  tree
   (with  the root  not  being  a.s.\ of  type  $d$  and with  offspring
   distribution $p$) conditioned to have population of type $i$ equal to
   $k(n)_i$           for            $i\in           [d]$           with
   $\lim_{n\rightarrow    \infty   }    k(n)_i/   |k(n)|=a(i)$,    where
   $|k(n)|=\sum_{j\in            [d]}             k(n)_j$            and
   $\lim_{n\rightarrow \infty } |k(n)|=\infty $.
\end{rem}

\subsection{Details for Remark~\ref{rem:tbd} on the condition $n_j=0$ if
  $\alpha_j=0$}
\label{sec:a=0-n>0}

We consider the  following example: a probability  distribution $p$ such
that   $\supp(p)$   containing   but   not  reduced   to   $\{0,   2\}$,
$1\not  \in  \supp(p)$,  $J=2$,  $\ca=(A_1,  A_2)$  is  a  partition  of
$\supp(p)$   (that  is,   $A_0=\emptyset$)  with   $A_1=\{0,  2\}$   and
$A_2\subset 3+2\N$.  Notice that  $A_2\neq \emptyset$. We set:
\begin{equation}
   \label{eq:def-a}
  a= \frac{\sqrt{p(0)}}{\sqrt{p(2)}}\in (0, +\infty ).
\end{equation}

We  consider the
direction  $\alpha=(1, 0)$.   It  is  elementary to  check  that $p$  is
generic    for   $\ca$    in   the    direction   $\alpha$    and   that
$p_\alpha=(p_\alpha(n))_{n\in        \N}$       is        given       by
$p_\alpha(0)=p_\alpha(2)=1/2$.  The  distribution  $p$ is  however  not
aperiodic for $\ca$ in the direction $\alpha$, but thanks to
Remark~\ref{rem:leaves-intro}, we still have the convergence of $\ct$
conditionally on $L_\ca(\ct)=(n,0)$, with $n$ odd going to infinity,
locally in distribution towards the Kesten's tree $\ct^*_\alpha$.

For $n$ odd  going to infinity, we shall check  that the distribution of
$\ct$ conditionally  on $L_\ca(\ct)=(n,1)$ does not  converge locally to
the   distribution    of   $\ct^*_\alpha$,    and   thus
Condition~\eqref{eq:cond-zero} is  required in general to  get the local
limit of conditioned BGW tree from Theorem~\ref{thm:main-thm}. To do so,
we shall simply check the positivity of  the limit, for $n$ odd going to
infinity, of:
 \[
   \P\left(k_{\roott} (\ct) \neq 2\, |\,
     L_{\ca}(\ct)=(n,1)\right)= \frac{B_1(n)}{B_2(n)},
 \]
 where
 \[
   B_1(n)=  \P\left(k_{\roott} (\ct) \neq 2\,,
     L_{\ca}(\ct)=(n,1)\right)
\quad\text{and}\quad
   B_2(n)=  \P\left(
     L_{\ca}(\ct)=(n,1)\right).
 \]
 
 Before going further, we recall that the number of planar binary trees
 with $n$ leaves is:
 \[
   f_{1, n}=\frac{1}{n}\, \binom{2n - 2}{n-1},
 \]
 (in particular $f_{1, n+1}$ is the so called $n$-th Catalan's number) and that
 $f_{1,n}=[z^n]  zC=[z^{n-1}]   C$,  where  we  write   simply  $C$  for
 $C(z)=(1- \sqrt{1-4z})/2z$.  Recall also that $zC^2-C+1=0$.   We deduce
 that the number  of planar forests with $k$ binary  trees and $n\geq k$
 leaves  is  given by  $f_{k,n}=[z^n]  z^kC^k=[z^{n-k}]  C^k$, and  that
 according to~\cite[(B.5)]{deutsch}:
\begin{equation}
  \label{eq:def-cat-forest}
  f_{k, n}=\frac{k}{n}\, \binom{2n - k -1}{n-1}= \binom{2n -k
     -1}{n-1} -  \binom{2n -k
     -1}{n}
 \quad\text{for}\quad
 n\geq k\geq 1.
\end{equation}
We set $f_{k,n}=0$ if $k>n$. 
 \medskip

 Let $n\geq k$  be odd integers.
 On the event $k_\roott(\ct)=k$ and $L_{\ca}(\ct)=(n,1)$,
 we get that $\ct$  can be seen as a forest of $k$  trees grafted on the
 root and  with the forest  having $(n+k)/2$  leaves and $(n-k)/2$
 internal nodes, all of them binary.
 We deduce that:
 \[
   \P\left(k_{\roott} (\ct)=k\,,
     L_{\ca}(\ct)=(n,1)\right)= p(k) p(0)^{(n+k)/2}\,  p(2)^{(n-k)/2}
   f_{k,(n+k)/2},
 \]
 and thus:
\[
   B_1(n)= (p(0) p(2))^{n/2} \sum_{k\in A_2, \, k\leq n}   f_{k, (n+k)/2} \,
   p(k)\,  a^k.
 \]
 A tree  $\bt$ such that  $ L_{\ca}(\bt)=(n,1)$  can be decomposed  as a
 binary tree with $\ell$ leaves, and on one of those $\ell$ leaves one grafts a
 forest   with  $k\in   A_2$   (and  $k\leq   n$)   binary  trees   with
 $(n+k)/2+1 -  \ell$ leaves; and  in total  the tree $\bt$  has $(n+k)/2$
 leaves, $(n-k)/2$ binary  branching nodes and one  node with out-degree
 $k$. Thus, we have:
 \begin{align}
   \label{eq:B2=}
   B_2(n)
   &= \sum_{k\in A_2, \, k\leq n}   p(k)p(0)^{(n+k)/2}
     p(2)^{(n-k)/2}\,
\sum_{\ell=1}^{(n-k)/2+1} \ell f_{1,\ell}\, f_{k, (n+k)/2 + 1 - \ell}
  \\
   \nonumber
   &= (p(0) p(2))^{n/2} \sum_{k\in A_2, \, k\leq n}   F_{k, (n+k)/2}\, p(k)\,  a^k,
 \end{align}
 where for $n\geq k\geq 1$:
 \[
   F_{k, n}= \sum_{\ell=1}^{n-k+1} \ell f_{1,\ell}\, f_{k,
     n + 1 - \ell}
 .
 \]
We give an explicit formula of $F_{k, n}$.
\begin{lem}
  \label{lem:catalan}
We have:
  \[
   F_{k,n}= \binom{2n-k}{n}= \frac{2n-k}{k}\, f_{k,n}
\quad\text{for}\quad
n\geq k\geq 1.
\]
\end{lem}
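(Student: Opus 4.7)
The plan is to evaluate the convolution defining $F_{k,n}$ by encoding it in the generating function of planar binary forests. Recall that the Catalan series $C(z)=(1-\sqrt{1-4z})/(2z)$ satisfies $zC^2-C+1=0$, and, as observed just before~\eqref{eq:def-cat-forest}, the ballot-type number $f_{k,m}=[z^{m-k}]C^k$ so that
\[
\sum_{m\ge k} f_{k,m}\,z^m=(zC(z))^k.
\]
In particular, for $k=1$ I have $\sum_{\ell\ge 1}f_{1,\ell}\,z^\ell=zC(z)$, and differentiating then multiplying by $z$ yields $\sum_{\ell\ge 1}\ell f_{1,\ell}\,z^\ell=z\,(zC(z))'$.

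The first step is to recognise $F_{k,n}$ as (a shift of) a Cauchy product: since the inner sum runs over $\ell+m=n+1$ with $\ell\ge 1$ and $m\ge k$, multiplying the two series above gives
\[
\sum_{n\ge k}F_{k,n}\,z^{n+1}
=\Bigl(\sum_{\ell\ge 1}\ell f_{1,\ell}\,z^{\ell}\Bigr)\Bigl(\sum_{m\ge k}f_{k,m}\,z^{m}\Bigr)
=z\,(zC(z))'\,(zC(z))^{k}.
\]
Dividing by $z$ and using the identity $(zC)'(zC)^{k}=\tfrac{1}{k+1}\,\tfrac{d}{dz}\bigl((zC)^{k+1}\bigr)$, the generating function collapses to
\[
\sum_{n\ge k}F_{k,n}\,z^{n}=\frac{1}{k+1}\,\frac{d}{dz}\bigl((zC(z))^{k+1}\bigr).
\]

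Extracting the coefficient of $z^n$ and using $[z^n]f'(z)=(n+1)[z^{n+1}]f(z)$, I obtain
\[
F_{k,n}=\frac{n+1}{k+1}\,[z^{n+1}](zC(z))^{k+1}=\frac{n+1}{k+1}\,f_{k+1,n+1}.
\]
Plugging in the closed form $f_{k+1,n+1}=\tfrac{k+1}{n+1}\binom{2n-k}{n}$ supplied by~\eqref{eq:def-cat-forest} yields the first equality $F_{k,n}=\binom{2n-k}{n}$. The second equality $\binom{2n-k}{n}=\tfrac{2n-k}{k}f_{k,n}$ is then a one-line factorial manipulation from~\eqref{eq:def-cat-forest}.

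No real obstacle is expected. The only point requiring care is the index shift $\ell+m=n+1$, which causes $F_{k,n}$ to appear as the coefficient of $z^{n+1}$ (not $z^n$) in the Cauchy product, and ensures the clean collapse to a single derivative after the multiplication by $z(zC)'$ is divided out. A purely combinatorial alternative, bijecting pairs (marked binary tree, binary forest of size $k$) with lattice paths to $(n,n-k)$, would also work but seems more cumbersome than the generating function route.
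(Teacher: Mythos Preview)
Your proof is correct and follows essentially the same route as the paper: both recognize $F_{k,n}$ as the coefficient of $z^n$ in $(zC)'(zC)^k$, rewrite this as $\tfrac{1}{k+1}\bigl((zC)^{k+1}\bigr)'$, extract $F_{k,n}=\tfrac{n+1}{k+1}f_{k+1,n+1}$, and finish with~\eqref{eq:def-cat-forest}. Your write-up is more detailed in setting up the Cauchy product and the index shift, but the argument is the same.
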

\begin{proof}
    We have:
\[    F_{k,n}
=[z^{n}] (zC)' z^{k} C^k
= [z^{n}] \inv{k+1} \left(z^{k+1} C^{k+1}\right)'
= [z^{n+1}] \frac{n+1}{k+1} z^{k+1} C^{k+1}
= \frac{n+1}{k+1} f_{k+1,n+1}.
\]
Then, use~\eqref{eq:def-cat-forest} to conclude.
\end{proof}

We shall now consider that $A_2$ is unbounded. Let $\varepsilon\in (0,
1)$ and write:
\begin{align*}
  B_1(n,\varepsilon)
  &=  \sum_{k\in A_2, \,
   k>   \varepsilon n}   f_{k, (n+k)/2} \, p(k)\,  a^k,\\
  B_3(n, \varepsilon)
&=  \sum_{k\in A_2, \, k\leq \varepsilon n}   F_{k,(n+k)/2}\, p(k)\,
                        a^k,\\
   B_4(n, \varepsilon)
&=  \sum_{k\in A_2, \, k> \varepsilon n}   F_{k,
  (n+k)/2}\, p(k)\,  a^k,
\end{align*}
so that using the two latter terms, we can rewrite $B_2(n)$ as:                        
\begin{equation}
   \label{eq:B2=B3-4}
  B_2(n)=(p(0) p(2))^{n/2}\,\left( B_3(n, \varepsilon) + B_4(n, \varepsilon)\right).
\end{equation}
For $k >  \varepsilon n$, we have that:
\[
  F_{k, (n+k)/2} = \frac{n}{k} \, f_{k, (n+k)/2}\leq
\inv{\varepsilon}\, f_{k, (n+k)/2} .
\]
This implies that:
\begin{equation}
   \label{eq:maj-B4}
  B_4(n,\varepsilon)\leq  \inv{\varepsilon} B_1(n,\varepsilon).
\end{equation}

We now assume that $A_2=3+2\N$ and there exists $b\in (0, 1)$ and $M\geq 1$ finite such that $M^{-1}\leq p(k) b^{-k} \leq M$ for $k\in A_2$.
Then, we have with $2m=n+3$ and $k=3+2\ell$:
\begin{align*}
  B_3(n,\varepsilon)
  & \leq M \sum_{k\in 3+2\N, k\leq  \varepsilon n} F_{k,
    (n+k)/2}\,   (ab)^k\\
  & \leq  M(ab)^3 \sum_{\ell\in \N, \ell\leq \varepsilon  m } F_{3+2\ell,
    m+\ell}\,   (ab)^{2 \ell}\\
  & = M(ab)^{-2m+3}\left( 1+(ab)^2\right)^{2m-3}  \sum_{\ell\in \N,
    \ell\leq \varepsilon  m } \binom{2m-
    3}{m+\ell}  \, r^{m+\ell} (1-r) ^{m-3-\ell}
\end{align*}
with $r/(1-r)=(ab)^2$ and thus $r=(ab)^2/\left( 1+(ab)^2\right)$.
As $r<1$, we deduce that:
\[
  \sum_{\ell\in \N,  \ell\leq \varepsilon  m } \binom{2m-  3}{m+\ell} \,
  r^{m+\ell} (1-r) ^{m-3-\ell}= \P(m\leq X \leq (1+\varepsilon) m)\leq
  \P(X \leq (1+\varepsilon) m),
\]
where $X$ is binomial with parameter $(2m-3, r)$.

We now assume that $ab>1$ and that $\varepsilon$ is small enough so that
$ab>(1+\varepsilon)/(1-\varepsilon)$. This yields $2r>1+\varepsilon$, so
that for $m$ large enough, we have $(1+\varepsilon)m\leq r (2m -3)$.
We deduce from~\cite[Theorem~2.1]{zzh22} that, with $j=\lfloor (1+\varepsilon)
    m\rfloor $ and $x=r(2m-3) -j+1$:
\[
  \frac{ \P(X \leq (1+\varepsilon) m)}{ \P(X =j )} \leq  2-x +\sqrt{x^2
    + 4(1-r) j}.
\]
Since $\lim_{m\rightarrow \infty } x=+\infty $ and
 $\lim_{m\rightarrow \infty } (x+j)/j=2r/(1+\varepsilon)$, we deduce
 that:
 \[
   \lim_{m\rightarrow \infty } \frac{ \P(X \leq (1+\varepsilon) m)}{
     \P(X =j )}  =c_0
   \quad\text{with}\quad
   c_0=\frac{r(1-\varepsilon)}{2r -(1+\varepsilon)}\cdot
 \]
 
Recall that $n=2m-3$.  So for $n$ large enough, we have with $k'=3+2\ell'$ and $\ell'=j-m=
 \lfloor\varepsilon     m\rfloor$, and thus  $n+k'=2j$, that:
 \begin{align*}
  B_3(n,\varepsilon)
   &\leq   2c_0 M(ab)^{-2m+3}\left( 1+(ab)^2\right)^{2m-3}
     \binom{2m- 3}{j}  \, r^{j} (1-r) ^{2m-3-j}\\
   &= 2 c_0M\, F_{k', (n+k')/2}\,  (ab)^{k'}.
\end{align*}
Notice that $j\geq  (1+\varepsilon)m -1$ and thus $k'\geq  \varepsilon n
+1$, so that:
\begin{equation}
  \label{eq:majo-B3}
   B_3(n,\varepsilon)
  \leq  2 c_0 M^2\, B_4(n,\varepsilon).
\end{equation}
Hence, using~\eqref{eq:maj-B4}, we obtain that:
\[
  B_3(n,\varepsilon)+  B_4(n,\varepsilon)\leq  \frac{1+2c_0M^2}{\varepsilon} B_1(n,\varepsilon).
\]
From the definition of $B_1( n,\varepsilon)$, we get that:
\[
(p(0) p(2))^{n/2}\, B_1(n,\varepsilon)=  \P\left(k_{\roott} (\ct) \geq \varepsilon n\,,
    L_{\ca}(\ct)=(n,1)\right).
\]
We deduce from~\eqref{eq:B2=B3-4} that:
\[
    \liminf_{n\rightarrow \infty }\P\left(k_{\roott} (\ct) \geq
      \varepsilon n\, |\,
     L_{\ca}(\ct)=(n,1)\right)
  =  \liminf_{n\rightarrow \infty } \frac{B_1(n,\varepsilon)}{B_2(n)} \geq
  \frac{\varepsilon}{1+ 2c_0M^2}>0,
\]
provided that there exists $b\in (0, 1)$ and $M\geq 1$ finite such that
$M^{-1}\leq p(k) b^{-k} \leq M$ for $k\in A_2= 3+2\N$ and
$\varepsilon>0$ is chosen so that $ab>(1+\varepsilon)/(1-\varepsilon)$,
with $a$ defined by~\eqref{eq:def-a}.

\medskip

  In conclusion, under the above hypothesis,  the distribution of
$\ct$ conditionally  on $\{L_\ca(\ct)=(n,1)\}$ does not  converge locally  to
the
distribution of  $\ct^*_\alpha$ as $n$ goes to infinity, whereas
 the distribution of
$\ct$ conditionally  on $\{L_\ca(\ct)=(n,0)\}$  converges locally  to
the
distribution of  $\ct^*_\alpha$. Furthermore, conditioning   on
$\{L_\ca(\ct)=(n,1)\}$ and letting $n$ goes to infinity gives  a condensation at the root with positive
  probability.

\bibliographystyle{abbrv}
\bibliography{cgwsub}

\end{document}